\newtheorem{theorem}{Theorem}[section]
\newtheorem{lemma}[theorem]{Lemma}
\newtheorem{proposition}[theorem]{Proposition}
\newtheorem{corollary}[theorem]{Corollary}
\newtheorem{remark}[theorem]{Remark}
\newtheorem{example}[theorem]{Example}
\newtheorem{algorithm}[theorem]{Algorithm}
\theoremstyle{definition}
\newcommand{\uumlaut}{{\"u}} 
\newcommand{\eps}{\varepsilon}
\newcommand{\Cst}{L}
\newcommand{\R}{\mathbb R}
\newcommand{\N}{\mathbb N}
\newcommand{\Z}{\mathbb Z}
\newcommand{\LL}{\mathbb \Lambda}
\newcommand{\UU}{\mathbb U}
\newcommand{\cF}{\mathcal F}
\newcommand{\ff}{f}
\newcommand{\lappie}{\chi}
\newcommand{\low}{r}
\newcommand{\upp}{R}
\newcommand{\Upper}{C_{\rm upp}}
\newcommand{\Lower}{c_{\rm low}}
\newcommand{\be}{\begin{equation}}
\newcommand{\ee}{\end{equation}}
\DeclareFontFamily{U}{mathx}{\hyphenchar\font45}
\DeclareFontShape{U}{mathx}{m}{n}{
      <5> <6> <7> <8> <9> <10>
      <10.95> <12> <14.4> <17.28> <20.74> <24.88>
      mathx10
      }{}
\DeclareSymbolFont{mathx}{U}{mathx}{m}{n}
\DeclareMathAccent{\widecheck}{0}{mathx}{"71}
\DeclareMathAccent{\wideparen}{0}{mathx}{"75}
\newcommand{\tria}{{\tau}}
\newcommand{\bbT}{{\mathcal T}}
\newcommand{\nodes}{\mathcal{N}}
\newcommand{\cM}{\mathcal{M}}
\newcommand{\cP}{\mathcal{P}}
\newcommand{\cE}{\mathcal{E}}
\newcommand{\cD}{\mathcal{D}}
\newcommand{\cA}{\mathcal{A}}
\newcommand{\cB}{\mathcal{B}}
\newcommand{\stab}{C_{\rm st}}
\DeclareMathOperator{\ran}{ran}
\DeclareMathOperator{\meas}{meas}
\DeclareMathOperator*{\argmin}{argmin}
\DeclareMathOperator{\supp}{supp}
\DeclareMathOperator{\divv}{div}
\newcommand{\cL}{\mathcal L}
\newcommand{\Lis}{\cL\mathrm{is}}
\newcommand{\refine}{\mbox{\rm \texttt{refine}}}
\newcommand{\Mark}{\mbox{\rm \texttt{mark}}}
\newcommand{\estimate}{\mbox{\rm \texttt{estimate}}}
\newcommand{\solve}{\mbox{\rm \texttt{solve}}}
\newcommand{\afem}{\mbox{\rm \texttt{afem}}}
\newcommand{\nipu}{\mbox{\rm \texttt{nested-inexact-preconditioned-Uzawa}}}
\newcommand{\err}{\mbox{\rm Err}}
\newenvironment{algotab}%
{\par\begin{samepage}%
\begin{tabbing}\ttfamily%
 \hspace*{5mm}\=\hspace{3ex}\=\hspace{3ex}\=\hspace{3ex}\=\hspace{3ex}%
\=\hspace{3ex}\=\hspace{3ex}\=\hspace{3ex}\=\hspace{3ex}\kill}%
{\end{tabbing}\end{samepage}}
\title{An optimal adaptive Fictitious Domain Method}
\date{\today}
\author{
Stefano~Berrone\thanks{Dipartimento di Scienze Matematiche,
Politecnico di Torino,
Corso Duca degli Abruzzi 24,
I-10129 Torino, Italy (stefano.berrone@polito.it )} 
\and
Andrea~Bonito%
\thanks{Department of Mathematics, Texas A\&M Unicersity, College Station, TX 77843, USA (bonito@math.tamu.edu)}
\and Rob~Stevenson%
\thanks{Korteweg-de Vries Institute for Mathematics,
University of Amsterdam,
P.O. Box 94248,
1090 GE Amsterdam, The Netherlands
(r.p.stevenson@uva.nl)}
\and Marco~Verani\thanks{MOX-Dipartimento di Matematica, Politecnico di Milano, P.zza Leonardo Da Vinci 32, I-20133 Milano, Italy (marco.verani@polimi.it).}}
\begin{document}
\maketitle
\begin{abstract} 
We consider a Fictitious Domain formulation of an elliptic partial differential equation and approximate the resulting saddle-point system using an inexact preconditioned Uzawa iterative algorithm.
Each iteration entails the approximation of an elliptic problems performed using adaptive finite element methods.
We prove that the overall method converges with the best possible rate and illustrate numerically our theoretical findings.
\end{abstract}

\section{Introduction} \label{Sintro}
In many engineering applications the efficient numerical solution of partial differential equations on 
complex geometries is of paramount importance. In this respect, one crucial issue is the construction of the computational grid. To face this problem, one can basically resort to two different types of approaches. In the first approach, a mesh is constructed on a sufficiently accurate approximation of the exact physical domain (see, e.g.,  isoparametric finite elements \cite{Ciarlet:book}, isogeometric analysis \cite{IGA:book}, or Arbitrary Lagrangian-Eulerian formulation \cite{DGH,HAC,HLZ}), while in the second approach one embeds the physical domain into a simpler computational mesh whose elements can intersect the boundary of the given domain. Clearly, the mesh generation process is extremely simplified in the second approach, while the imposition of boundary conditions requires extra work. 
The second approach is in particular useful
when the domain changes
during the computation, such as in free-boundary and shape optimization problems.

Among the huge variety of methods sharing the philosophy of the second approach, let us mention here the Immersed Boundary methods (see, e.g., \cite{Peskin:acta}), the Penalty Methods (see, e.g., \cite{Babuska:penalty}), the Fictitious Domain/Embedding Domain Methods (see, e.g., \cite{Borgers-Widlund:1990,BG03}) and the Cut Element method (see, e.g. \cite{BH10,BH12}).

Following up on our  earlier work \cite{20.195}, we consider the Fictitious Domain Method with Lagrange multiplier introduced in \cite{Glowinski:1994, GiraultGlowinski1995} (see also \cite{Babuska:1972} for the pioneering work inspiring this approach). In this approach, the physical domain $\wideparen{\Omega}$ with boundary $\gamma$ is embedded into a simpler and larger domain $\Omega$ (the fictitious domain), the right-hand side is extended to the fictitious domain and the boundary conditions on $\gamma$ are appended through the use of a Lagrange multiplier. The Fictitious Domain Method gives rise to a symmetric saddle point problem whose exact primary solution restricted to $\wideparen{\Omega}$ corresponds to the solution of the original problem. 

Even for smooth data, generally the solution of this saddle point problem is non-smooth. 
Indeed, when posed on a non-smooth, non-convex domain, generally already the solution of the original PDE will be non-smooth.
Depending on the extension of the data, the solution of the extended problem might even be more singular.
To achieve nevertheless the best possible convergence rate allowed by the polynomial orders of the applied trial spaces, we will apply an adaptive solution method.

Convergence and optimality of adaptive methods has been demonstrated for elliptic problems, but much less is known for saddle point problems.
Exceptions are given by the special cases of mixed discretizations of Poisson's problem (see e.g \cite{19.95,38.45, 37.45,140}), and the pseudostress-velocity formulation of the Stokes problem (see \cite{37.46,139.5}),
where optimal rates were established by demonstrating that  
the finite element approximation for the flux or pseudo-stress is near-best in the sense that it provides the quasi-orthogonality axiom from \cite{35.93555}.

 In this work we focus on Fictitious Domain Method on a two-dimensional domain with the application of piecewise constant trial spaces for the Lagrange multiplier $\lambda$ and continuous piecewise linears for the primary variable $u$. 
In the spirit of the method no kind of alignment is assumed between the partitions of $\gamma$, and the restriction to $\gamma$ of the partitions of the fictitious domain.
Following an idea from \cite{18.8}, we solve the saddle-point problem with a nested inexact preconditioned Uzawa iteration (see Algorithm \ref{a:fd}): an iterative scheme hinging upon three nested loops.
The outer loop adjusts the Galerkin approximation space for the Schur complement equation that determines $\lambda$.
The intermediate loop solves this Galerkin system by a damped Richardson iteration. Each iteration of the latter involves solving an elliptic problem on the fictitious domain whose solution is approximated in the inner loop.
For sufficiently smooth data, it holds that $\lambda \in L_2(\gamma)$. Therefore, in view of the orders of the trial spaces there is no (qualitative) benefit in applying locally refined partitions on $\gamma$ for the approximation of $\lambda$.
The arising `inner' elliptic problems will be solved with an adaptive finite element method (afem). A complication is that the forcing functional for these problems involves a weighted integral on $\gamma$ meaning that the data is not in $L_2(\Omega)$.  We apply the afem from \cite{45.47} that allows for data in $H^{-1}(\Omega)$.
Since the Schur complement operator of our saddle point problem is an operator of order $-1$, the Richardson iteration requires a preconditioner. We will apply a biorthogonal wavelet preconditioner.
The overall method will be proven to converge with the best possible rate (see Theorem~\ref{final}).

At the end of this paper, it will be shown that our results apply verbatim to the $d$-dimensional setting.
A difference though is the following:
the extension of the original PDE to the fictitious domain yields a solution that is generally non-smooth over the interface.
As we will demonstrate this has the consequence that, in three and more dimensions,  best (isotropic) local refinements provide a rate that is generally lower than for a smooth solution (in 3 dimensions, $\frac{1}{4}$ vs. $\frac{1}{3}$).
This problem can be cured by constructing a proper extension of the right-hand side to the fictitious domain which will be studied in forthcoming work (cf. \cite{Mommer-ima}).

The outline of the paper is as follows. In Sect.~\ref{ficcie} we recall the Fictitious Domain Method. 
In Sect.~\ref{saddle}--\ref{NestedUz}, we consider the solution of an abstract, infinite dimensional symmetric saddle point problem by the Uzawa iteration. We discuss the reduction of the saddle-point problem to its Schur complement (Sect.~\ref{saddle}), preconditioning of this Schur complement (Sect.~\ref{PrecUz}), a posteriori error estimation (Sect.~\ref{Apostest}), and the inexact preconditioned Uzawa iteration combined with a nested iteration technique (Sect.~\ref{NestedUz}).
The inexactness of the iteration refers to the fact that the application of the Schur complement is approximated by replacing the exact inverse of the `left upper block operator' by a call of an (adaptive) finite element solver.

The results in Sect.~\ref{saddle}--\ref{NestedUz} provide a framework for the development of optimal adaptive routines for solving general symmetric saddle point problems. In this context, note that 
any problem $\argmin_{u \in \mathbb{A}}\|Bu-f\|_{\mathbb{B}'}$, where for Hilbert spaces $\mathbb{A}$, $\mathbb{B}$, $B\colon \mathbb{A}\rightarrow \ran B \subseteq \mathbb{B}'$ is boundedly invertible and $f \in \mathbb{B}'$, can be reformulated as the well-posed symmetric saddle point problem $\left[\begin{array}{@{}cc@{}} R & B\\B'&0\end{array} \right]\left[\begin{array}{@{}c@{}}y \\ u\end{array} \right]=\left[\begin{array}{@{}c@{}}f \\ 0\end{array} \right]$, with $R$ being the Riesz mapping on $\mathbb{B}$ (e.g. \cite{45.44}).

In Sect.~\ref{s:inner}, we consider the afem from \cite{45.47} for solving Poisson's problem with $H^{-1}(\Omega)$ data.
We show convergence and optimality of a variant that avoids an inner loop for reducing data oscillation.
In Sect.~\ref{Svaryingrhs}, we apply this afem for solving the `inner' elliptic problems in 
the inexact preconditioned Uzawa iteration applied to the fictitious domain problem, and show that the overall method converges with the best possible rate.
In Sect.~\ref{Numres}, we report on numerical experiments obtained with our adaptive Fictitious Domain solver.
Finally, general space dimensions and/or higher order approximations will be discussed in Sect.~\ref{Sgeneralizations}.

In this work, by $C \lesssim D$ we will mean that $C$ can be bounded by a multiple of $D$, independently of parameters which C and D may depend on. Obviously, $C \gtrsim D$ is defined as $D \lesssim C$, and $C\eqsim D$ as $C\lesssim D$ and $C \gtrsim D$.

For normed linear spaces $\mathbb{A}$ and $\mathbb{B}$, $\cL(\mathbb{A},\mathbb{B})$ will denote the space of bounded linear mappings $\mathbb{A} \rightarrow \mathbb{B}$ endowed with the operator norm $\|\cdot\|_{\cL(\mathbb{A},\mathbb{B})}$. The subset of invertible operators in $\cL(\mathbb{A},\mathbb{B})$  with inverses in $\cL(\mathbb{B},\mathbb{A})$
will be denoted as $\Lis(\mathbb{A},\mathbb{B})$.

\section{Fictitious domain method} \label{ficcie}
On a {\em two-dimensional} domain $\wideparen{\Omega} \subset \R^2$ with Lipschitz continuous boundary $\gamma$, 
and $\wideparen{f} \in L_2(\wideparen{\Omega}) \hookrightarrow H^{-1}(\wideparen{\Omega})$, $g \in H^1(\gamma) \hookrightarrow H^{\frac{1}{2}}(\gamma)$, we consider the Poisson problem
\be \label{16}
\left\{\begin{array}{rcll} -\Delta \wideparen{u} & \!=\! & \wideparen{f} & \text{ on }\wideparen{\Omega},\\
 \wideparen{u} & \!=\! & g & \text{ on } \gamma.
 \end{array}
 \right.
 \ee
On a Lipschitz $\Omega \subset \R^2$ with $\wideparen{\Omega} \Subset \Omega$, $f \in L_2(\Omega)$ being an $L_2$-bounded extension of $\wideparen{f}$, and the bilinear forms $a(u,v):=\int_\Omega \nabla u \cdot \nabla v\,dx$, $b(v,\lambda):=- \int_\gamma v \lambda \,ds$, we consider the problem of finding $(u,\lambda) \in H^1_0(\Omega) \times H^{-\frac{1}{2}}(\gamma)$ such that
\begin{equation} \label{17}
\begin{split}
a(u,v)+b(v,\lambda) & = \int_\Omega f v \,dx \quad (v \in H^1_0(\Omega)),\\
b(u,\mu) & = -  \int_\gamma g \mu\,d s \quad (\mu \in H^{-\frac{1}{2}}(\gamma)),
\end{split}
\end{equation}
where $\int_\gamma g \mu\,d s$ should be read as the unique extension of the $L_2(\gamma)$-scalar product to the duality pairing on $H^{\frac{1}{2}}(\gamma)\times H^{-\frac{1}{2}}(\gamma)$.
It is well-known that this saddle-point defines a boundedly invertible mapping between $H^1_0(\Omega) \times H^{-\frac{1}{2}}(\gamma)$ and its dual, the main ingredient being the fact that 
$\inf\{\|v\|_{H^1(\Omega)}\colon v|_{\gamma}=\mu\}$ defines an equivalent norm on $H^{\frac{1}{2}}(\gamma)=(H^{-\frac{1}{2}}(\gamma))'$. 
Setting $\breve{\Omega}:=\Omega \setminus \overline{\wideparen{\Omega}}$, and applying integration-by-parts to both terms in  $a(u,v)=\int_{\wideparen{\Omega}} \nabla u \cdot \nabla v\,dx+\int_{\breve{\Omega}} \nabla u \cdot \nabla v\,dx$, one infers that $u|_{\wideparen{\Omega}}=\wideparen{u}$, being the solution of \eqref{16},
that $\breve{u}:=u|_{\breve{\Omega}}$ solves $-\Delta \breve{u}=f$ on $\breve{\Omega}$, $\breve{u}=g$ on $\gamma$, and $\breve{u}=0$ on $\partial{\Omega}$,
and finally that $\lambda=
\frac{\partial \breve{u}}{\partial \vec{n}}|_{\gamma}-\frac{\partial \wideparen{u}}{\partial \vec{n}}|_{\gamma}$, where $\vec{n}$ is the normal to $\gamma$ exterior to $\wideparen{\Omega}$.

Since these Poisson problems on both Lipschitz domains $\wideparen{\Omega}$ and $\breve{\Omega}$ have forcing terms in $L_2$ and Dirichlet boundary data in $H^1$, \cite[Ch.~5, Thm.~1.1]{234.5} 
shows that
\begin{equation} \label{170}
\lambda \in L_2(\gamma), \text{ with } \|\lambda\|_{L_2(\gamma)} \lesssim \|f\|_{L_2(\Omega)}+\|g\|_{H^1(\gamma)}.
\end{equation}

We are going to approximate the solution $(u,\lambda)$ of \eqref{17} by functions from finite element spaces, where we consider the lowest order case by taking {\em continuous piecewise linears} for the approximation for $u$, and {\em piecewise constants} for the approximation for $\lambda$. 

 Taking into account the two-dimensional domain and the orders of the finite element spaces, the error measured in $H^1(\Omega)$-norm of the best approximation for $u$ can be expected to be generally at best of order $N^{-\frac{1}{2}}$, where $N$ denotes the dimension of the finite element space on $\Omega$.
In view of \eqref{170}, the error measured in $H^{-\frac{1}{2}}(\gamma)$-norm of the best approximation for $\lambda$ from the space of piecewise constants w.r.t. a \emph{quasi-uniform} partition of $\gamma$ into $N$ pieces is of order $N^{-\frac{1}{2}}$. Since apparently no overall (qualitative) advantage can be obtained from the application of locally refined partitions on $\gamma$, we will consider a sequence of \emph{uniform} dyadically refined partitions on $\gamma$.

\section{Symmetric Saddle point problem} \label{saddle}
The variational problem that arises from the fictitious domain method is an example of a symmetric saddle point problem, that in this and the following three sections will be studied in an abstract setting.

Let $\UU$ and $\LL$ Hilbert spaces.
For a bilinear, bounded, symmetric, and coercive $a:\UU \times \UU \rightarrow \R$, a bilinear and bounded $b:\UU \times \LL \rightarrow \R$ with \mbox{$\inf_{0 \neq \mu \in \LL} \sup_{0 \neq w \in \UU} \frac{b(w,\mu)}{\|w\|_\UU \|\mu\|_\LL}>0$} (`inf-sup' condition), given $(f,g) \in \UU' \times\LL'$ we
consider the problem of finding \framebox{$(u,\lambda) \in \UU \times \LL$} that satisfies
\be \label{2}
a(u,v)+b(v,\lambda)+b(u,\mu)=f(v) - g(\mu) \quad ((v,\mu) \in \UU \times \LL).
\ee
It is well-known that under aforementioned conditions on $a$ and $b$, 
$$
(u,\lambda) \mapsto ((v,\mu) \mapsto a(u,v)+b(v,\lambda)+b(u,\mu)) \in \Lis(\UU \times \LL,(\UU \times \LL)').
$$

With $A \in \Lis(\UU,\UU')$, $B \in \cL(\UU,\LL')$ defined by $(Au)(v)=a(u,v)$, $(Bu)(\lambda)=b(u,\lambda)$,  equivalent formulations of \eqref{2} are given by
$$
\left[
\begin{array}{@{}cc@{}}
A & B'\\
B & 0
\end{array}
\right]
\left[
\begin{array}{@{}c@{}}
u\\
\lambda
\end{array}
\right]
=
\left[
\begin{array}{@{}c@{}}
f\\
-g
\end{array}
\right],
$$
and
$$
\left[
\begin{array}{@{}cc@{}}
A & B'\\
0 & S
\end{array}
\right]
\left[
\begin{array}{@{}c@{}}
u\\
\lambda
\end{array}
\right]
=
\left[
\begin{array}{@{}c@{}}
f\\
B A^{-1}f+g
\end{array}
\right],
$$
where $S:=B A^{-1}B' \in \cL(\LL,\LL')$ is the \emph{Schur complement} operator. Obviously $S=S'$, and furthermore, as demonstrated by the next lemma, $S$ is coercive (so in particular $S \in \Lis(\LL,\LL')$).

\begin{lemma} It holds that $(S\mu)(\mu)=\sup_{0 \neq v \in \UU} \frac{b(v,\mu)^2}{a(v,v)} \eqsim \|\mu\|_\LL^2$ ($ \mu \in \LL$).
\end{lemma}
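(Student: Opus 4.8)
The plan is to first establish the displayed identity $(S\mu)(\mu)=\sup_{0\neq v\in\UU}\frac{b(v,\mu)^2}{a(v,v)}$, and then read off the two-sided bound $\eqsim\|\mu\|_\LL^2$ from boundedness and coercivity of $a$ together with the inf-sup condition on $b$.

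For the identity, I would introduce $w:=A^{-1}B'\mu\in\UU$. By the definitions of $A$ and of the transpose $B'$, this $w$ is characterised by $a(w,v)=(Aw)(v)=(B'\mu)(v)=(Bv)(\mu)=b(v,\mu)$ for all $v\in\UU$. Taking $v=w$ then gives $(S\mu)(\mu)=(BA^{-1}B'\mu)(\mu)=(Bw)(\mu)=b(w,\mu)=a(w,w)$. Since $a$ is symmetric, bounded and coercive, it is an inner product on $\UU$ whose induced norm is equivalent to $\|\cdot\|_\UU$; the Cauchy--Schwarz inequality for this inner product gives $b(v,\mu)^2=a(w,v)^2\le a(w,w)\,a(v,v)$ for every $v\in\UU$, with equality for $v=w$. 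Dividing by $a(v,v)>0$ and taking the supremum over $0\neq v\in\UU$ therefore yields exactly $a(w,w)=(S\mu)(\mu)$, the claimed identity. Working in the energy inner product, rather than manipulating $A^{-1}$ directly, is what makes the equality — as opposed to a mere inequality — transparent, since it is precisely the Cauchy--Schwarz equality case that is realised by $v=w$.

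For the norm equivalence, the upper bound is immediate: boundedness of $b$ and coercivity of $a$ give $b(v,\mu)^2\lesssim\|v\|_\UU^2\|\mu\|_\LL^2$ and $a(v,v)\gtrsim\|v\|_\UU^2$, hence $\frac{b(v,\mu)^2}{a(v,v)}\lesssim\|\mu\|_\LL^2$ uniformly in $v$, and the supremum inherits this bound. For the lower bound I would instead estimate $a(v,v)\lesssim\|v\|_\UU^2$ in the denominator, pull the square outside the supremum (legitimate since replacing $v$ by $-v$ covers both signs of $b(v,\mu)$), and apply the inf-sup hypothesis in the form $\sup_{0\neq v\in\UU}\frac{b(v,\mu)}{\|v\|_\UU}\gtrsim\|\mu\|_\LL$ to obtain $\sup_{0\neq v\in\UU}\frac{b(v,\mu)^2}{a(v,v)}\gtrsim\|\mu\|_\LL^2$. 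Combining the identity with these two estimates proves the lemma. I do not anticipate a genuine obstacle here; the only mild subtlety is the passage through the Cauchy--Schwarz equality case, which is handled by the explicit choice $v=w$ above.
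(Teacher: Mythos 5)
Your proof is correct, and it reaches the identity by a somewhat more elementary route than the paper. You work throughout in the energy inner product $a(\cdot,\cdot)$: setting $w:=A^{-1}B'\mu$, so that $a(w,v)=b(v,\mu)$ for all $v\in\UU$, you get $(S\mu)(\mu)=b(w,\mu)=a(w,w)$ and then identify the supremum as $a(w,w)$ via Cauchy--Schwarz for $a$, with the maximizer $v=w$ exhibited explicitly (the degenerate case $w=0$, where the supremum is over zero quotients, is trivial and in fact excluded for $\mu\neq 0$ by the inf-sup condition). The paper instead introduces the Riesz map $R_\UU$, sets $\tilde A=R_\UU^{-1}A$, $\tilde B'=R_\UU^{-1}B'$, substitutes $w=\tilde A^{1/2}v$, and evaluates the supremum as the squared $\UU$-norm of $\tilde A^{-1/2}\tilde B'\mu$; this makes explicit that $(S\mu)(\mu)$ is the squared dual norm of $B'\mu$ with respect to the energy norm, at the price of invoking the operator square root of a self-adjoint coercive operator. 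Your version avoids both the Riesz map and $\tilde A^{-1/2}$, which is a genuine simplification, while the underlying mechanism (Riesz representation/Cauchy--Schwarz in a suitable inner product) is the same. Your treatment of the equivalence $\eqsim\|\mu\|_\LL^2$ matches the paper's one-line justification, made precise: coercivity of $a$ and boundedness of $b$ for the upper bound, boundedness of $a$ together with the inf-sup condition (and the sign symmetry $v\mapsto -v$ to pull the square out of the supremum) for the lower bound.
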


\begin{proof} 
Let $R_\UU: \UU \rightarrow \UU'$ denote the Riesz map defined by $(R_\UU v)(w)=\langle w, v\rangle_\UU$.
Writing $\tilde{B}'=R_\UU^{-1} B'$, $\tilde{A}= R_\UU^{-1} A$, we have
\begin{align*}
\sup_{0 \neq v \in \UU} \frac{b(v,\mu)^2}{a(v,v)} &=
\sup_{0 \neq v \in \UU} \frac{(B' \mu)(v)^2}{(A v)(v)}=
\sup_{0 \neq v \in \UU} \frac{\langle v,\tilde{B}' \mu\rangle_\UU^2}{\langle v,\tilde{A}v \rangle_\UU}=
\sup_{0 \neq w \in \UU} \frac{\langle w,\tilde{A}^{-\frac{1}{2}}\tilde{B}' \mu\rangle_\UU^2}{\langle w,w\rangle_\UU}\\
&=
\langle \tilde{A}^{-\frac{1}{2}} \tilde{B}' \mu,\tilde{A}^{-\frac{1}{2}} \tilde{B}' \mu\rangle_\UU =
\langle A^{-1} B' \mu,R^{-1}_\UU B' \mu\rangle_\UU
=(S \mu)(\mu) \quad(\mu \in \LL).
\end{align*} 
The second statement follows from the coercivity of $a$, the boundedness of $b$, and the inf-sup condition.
\end{proof}

As we reserved $(u,\lambda)$ to denote the exact solution of the saddle point problem, in the remainder of this section we fix three more notations (i)-(iii) that we use throughout this paper.

(i). For a finite dimensional (or more generally, closed) subspace $\LL_\sigma \subset \LL$, where $\sigma$ runs over a collection ${\mathcal S}$, for $\chi \in \LL$ we let \framebox{$\chi_\sigma \in \LL_\sigma$} denote its \emph{Galerkin approximation} defined by
\be \label{not1}
(S \chi_\sigma)(\mu)=(S \chi)(\mu) \quad(\mu \in \LL_\sigma).
\ee
This $\chi_\sigma$ is the best approximation to $\chi$ from $\LL_\sigma$ w.r.t. to the `energy-norm' $\mu \mapsto \sqrt{(S \mu)(\mu)}$.

(ii). Given a $\chi \in \LL$, we let \framebox{$u^\chi \in \UU$} denote the solution of 
\be \label{not3}
a(u^\chi,v)=f(v)- b(v,\chi) \quad (v \in \UU),
\ee
i.e., $u^\chi=A^{-1}(f-B' \chi)$. 

Notice that $u^\lambda=u$. 
Furthermore, we note that given a $\LL_\sigma \subset \LL$, the pair $(u^{\lambda_\sigma},\lambda_\sigma) \in \UU\times \LL_\sigma$ solves the \emph{semi-discrete 
saddle point problem}
\be \label{not4}
a(u^{\lambda_\sigma},v)+b(v,\lambda_\sigma)+b(u^{\lambda_\sigma},\mu)=f(v) - g(\mu) \quad ((v,\mu) \in \UU \times \LL_\sigma).
\ee
\begin{remark}
Well-posedness of the original saddle-point problem implies this for the semi-discrete one, uniform in $\sigma \in {\mathcal S}$. In other words,
$$
(u,\lambda) \mapsto ((v,\mu) \mapsto a(u,v)+b(v,\lambda)+b(u,\mu)) \in \Lis(\UU \times \LL_\sigma,(\UU \times \LL_\sigma)'),
$$
with both the norm of the operator and that of its inverse being uniformly bounded.
\end{remark}

(iii). For a finite dimensional (or more generally, closed) subspace $\UU_\tria \subset \UU$, where $\tria$ runs over a collection $\bbT$, for $w \in \UU$ we let \framebox{$w_\tau \in \UU_\tau$}
denote its \emph{Galerkin approximation} defined by
\be \label{not2}
a(w_\tau,v)=a(w,v) \quad(v \in \UU_\tau),
\ee
being the best approximation to $w$ from $\UU_\tau$ w.r.t. $v \mapsto \sqrt{a(v,v)}$.

\begin{remark} \label{LBB} Since we never solve any \emph{fully} discrete saddle-point problem, i.e., a system \eqref{2} in which the test- and trial space $\UU\times \LL$ is replaced by $\UU_\tria\times \LL_\sigma$, 
a Ladyzhenskaya-Babu\v{s}ka-Brezzi (LBB) condition ensuring stability of the latter will never enter our considerations.
\end{remark}

\section{Preconditioned Uzawa iteration} \label{PrecUz}
With $I_\sigma:\LL_\sigma \rightarrow \LL$ being the trivial embedding, and $I'_\sigma:\LL' \rightarrow \LL_\sigma'$ its adjoint, the Galerkin approximation $\lambda_\sigma \in \LL_\sigma$ for $\lambda$ solves
\be \label{gal}
S_\sigma \lambda_\sigma = I_\sigma' (B A^{-1} f +g),\quad \text{where }S_\sigma:=I_\sigma' S I_\sigma \in \Lis(\LL_\sigma,\LL'_\sigma).
\ee
At some occasions, $I_\sigma$ will be omitted from the notation.

Although $S_\sigma$ is a mapping between finite dimensional spaces, its matrix representation cannot be computed. Since on the other hand the application of $S_\sigma$ can be mimicked by approximating the application of $A^{-1}$, for solving \eqref{gal} we will resort to an iterative method. In order to do so,
we need  a (uniform) `preconditioner': Let $M_\sigma \in \Lis(\LL_\sigma, \LL'_\sigma)$ be such that $M_\sigma=M_\sigma'$, and, for some constants
$\low, \upp>0$
\be \label{precond}
\low \|\mu\|_\LL^2  \leq (M_\sigma \mu)(\mu) \leq \upp \|\mu\|_\LL^2 \quad (\mu \in \LL_\sigma,\, \sigma \in {\mathcal S}).
\ee
W.r.t. the scalar product $(\mu,\chi) \mapsto (M_\sigma \mu)(\chi)$ on $\LL_\sigma \times \LL_\sigma$, the operator $M_\sigma^{-1} S_\sigma:\LL_\sigma \rightarrow \LL_\sigma$ is symmetric, coercive, and uniformly boundedly invertible. 

For solving \eqref{gal}, we  consider the \emph{damped, preconditioned Richardson iteration} that, for given $\lambda_\sigma^{(0)} \in \LL_\sigma$, produces $(\lambda_\sigma^{(j)})_{j \geq 0} \subset \LL_\sigma$  defined by
\begin{align} \nonumber
\lambda_\sigma^{(j+1)}:&=\lambda_\sigma^{(j)}+\beta M_\sigma^{-1} I_\sigma' (BA^{-1}f + g-S I_\sigma \lambda_\sigma^{(j)})\\
&= \lambda_\sigma^{(j)}+\beta M_\sigma^{-1} I_\sigma' (B u^{\lambda_\sigma^{(j)}} +g) \label{Uzawa}
\end{align}
(cf. \eqref{not3}), in the latter form known as the (damped) \emph{preconditioned Uzawa iteration}.
Taking a constant $\beta \in \Big(0,\frac{2}{\sup_{\sigma \in {\mathcal S}}\rho(M_\sigma^{-1} S_\sigma)}\Big)$, in each step of \eqref{Uzawa} the error measured in the norm on $\LL_\sigma$ associated to either $S_\sigma$ or $M_\sigma$ is reduced by at least the factor
\be \label{alpha_rho}
\rho:=\sup_{\sigma \in {\mathcal S}} \rho(I-\beta M_\sigma^{-1} S_\sigma)<1.
\ee
With the optimal choice
\begin{equation}
\beta=\frac{2}{\sup_{\sigma \in {\mathcal S}} \rho(M_\sigma^{-1}
  S_\sigma)+(\sup_{\sigma \in {\mathcal S}} \rho(M_\sigma
  S_\sigma^{-1}))^{-1}},
\label{beta.opt}
\end{equation}
 it holds that $\rho=\frac{\kappa-1}{\kappa+1}$ where $\kappa:=\sup_{\sigma \in {\mathcal S}} \rho(M_\sigma^{-1} S_\sigma)\sup_{\sigma \in {\mathcal S}} \rho(M_\sigma S_\sigma^{-1})$.

To reformulate \eqref{Uzawa} in coordinates, let $\Phi_\sigma$ be a basis for $\LL_{\sigma}$. 
We set $\cF_{\sigma}:\R^{\dim \LL_{\sigma}} \rightarrow \LL_{\sigma}:{\bf c} \mapsto {\bf c}^\top \Phi_\sigma$, so that, equipping $\R^{\dim \LL_{\sigma}}  \eqsim (\R^{\dim \LL_{\sigma}})'$ with the standard Euclidean scalar product $\langle\,,\,\rangle$, its adjoint
$\cF_{\sigma}':\LL_{\sigma}' \rightarrow \R^{\dim \LL_{\sigma}}$ is the mapping $f \mapsto f(\Phi_\sigma)$.
Setting $\bm{\lambda}_{\sigma}^{(j)}:=\cF_{\sigma}^{-1} \lambda_{\sigma}^{(j)}$, i.e, $\bm{\lambda}_{\sigma}^{(j)}$ is the coordinate vector of $\lambda_{\sigma}^{(j)}$ w.r.t. $\Phi_\sigma$, an equivalent formulation of \eqref{Uzawa} reads as 
\begin{align*}
\bm{\lambda}_{\sigma}^{(j+1)} &= \bm{\lambda}_{\sigma}^{(j)}+\beta (\cF_{\sigma}' M_{\sigma} \cF_{\sigma})^{-1} \cF_{\sigma}' I_{\sigma}' (B u^{\lambda_{\sigma}^{(j)}} +g)\\
&= \bm{\lambda}_{\sigma}^{(j)}+\beta {\bf M}_\sigma^{-1}
 (B u^{\lambda_{\sigma}^{(j)}} +g)(\Phi_\sigma).
\end{align*}
with {\em preconditioner} ${\bf M}_\sigma:=\cF_{\sigma}' M_{\sigma} \cF_{\sigma}$.

The analysis of a practical scheme where $u^{\lambda_\sigma^{(j)}}$ is replaced by a (Galerkin) approximation from a finite dimensional subspace of $\UU$ is postponed to Sect.~\ref{NestedUz}.

\begin{example} With $R_\LL:\LL \rightarrow \LL'$ being the Riesz map defined by $(R_\LL q)(r)=\langle r,q\rangle_\LL$, the Riesz map $R_{\LL_\sigma}:\LL_\sigma \rightarrow \LL_\sigma'$ is given by $I_\sigma' R I_\sigma$. For the choice $M_\sigma=R_{\LL_\sigma}$ (which obviously satisfies \eqref{precond}), for $\chi \in \LL$, $\mu \in \LL_\sigma$ we have
$$
\langle M_\sigma^{-1} I_\sigma' R_\LL \chi,\mu\rangle_\LL=(I_\sigma' R_\LL \chi)(\mu)=(R_\LL \chi)(\mu)=\langle \chi,\mu\rangle_\LL,
$$
i.e., $M_\sigma^{-1} I_\sigma' R_\LL=Q_\sigma$, being the $\LL$-orthogonal projector onto $\LL_\sigma$. So with this choice of $M_\sigma$, the second line in \eqref{Uzawa} reads as 
$$
\lambda_\sigma^{(j+1)}:=\lambda_\sigma^{(j)}+\beta Q_\sigma R_\LL^{-1} (B u^{(j)} +g).
$$
This choice of $M_\sigma$ seems only practically feasible when $\LL$ is an $L_2$-space.

In the setting of a stationary Stokes problem, it holds that $\UU=H^1_0(\Omega)^n$, $\LL=L_{2}(\Omega)/\R$, and 
$R_\LL^{-1}B=\divv$. So with $M_\sigma=R_{\LL_\sigma}$, and writing $R_\LL^{-1} g$ simply as $g$,
the second line in \eqref{Uzawa} reads as $\lambda_\sigma^{(j+1)}:=\lambda_\sigma^{(j)}+\beta Q_\sigma (\divv u^{(j)} +g)$.
From $\|\divv \cdot\|_{L_2(\Omega)} \leq \|\nabla \cdot\|_{L_2(\Omega)^{n^2}}$ on $\UU$, one infers that in this case one can take $\beta=1$, see \cite{239.2}.
\end{example}

\begin{example} \label{ex1}
In the case of the fictitious domain method introduced in Sect.~\ref{ficcie}, we have $\LL=H^{-\frac{1}{2}}(\gamma)$ so that a non-trivial preconditioner is required.
Uniform preconditioners of \emph{multi-level type} of \emph{linear complexity} even on locally refined partitions have recently been proposed: 
Preconditioners of (additive) \emph{subspace correction type} were constructed for two- or three-dimensional domains $\wideparen{\Omega}$ in \cite{75.065} or \cite{75.256}.
Within the framework of \emph{operator preconditioning} (\cite{138.26}), preconditioners for two- and three-dimensional domains are constructed in \cite{249.97,249.98}.

We now consider the special setting where $\wideparen{\Omega} \subset \R^2$ and $\{0\}=\LL_{\sigma_0} \subset \LL_{\sigma_1} \subset \cdots \subset \LL$ is
 a sequence of spaces of piecewise constant functions w.r.t. to a sequence of \emph{uniformly} dyadically refined partitions $\sigma_1 \prec \sigma_2 \prec \cdots$ of $\gamma=\partial\wideparen{\Omega}$, with  $\sigma_1=\sigma_\bot $ is some fixed `bottom' partition.
 In this case, we can follow \cite{242.81} and construct a \emph{wavelet preconditioner} based on a compactly supported and piecewise constant wavelet basis for $H^{-\frac{1}{2}}(\R /\Z)$.
 All wavelets with `levels' less or equal to $i$ span all piecewise constants w.r.t. a partition of $[0,1]$ into $2^{-(i-1)}\#\sigma_\bot$ equally-sized subintervals.
Lifting this basis to $\gamma$, the uniform preconditioner $M_{\sigma_i} \in \Lis(\LL_{\sigma_i},\LL_{\sigma_i}')$ is defined by ${\bf M}_{\sigma_i}^{-1}={\bf T}_i {\bf T}_i^\top$, where ${\bf T}_i$ is the basis transformation from the wavelet basis to the canonical single scale basis $\Phi_{\sigma_i}$ for $\LL_{\sigma_i}$, which can be performed in linear complexity (see, e.g., the appendix of \cite{BBSV17} for more details). 
 This is the strategy adopted in the numerical experiments proposed in Section~\ref{Numres}.
\end{example}

Relevant references for Uzawa iterations in possibly infinite dimensional settings include \cite{34.67,48.4,18.8,18.675,174,75.066}.
At some places in the literature, $\LL$ is (implicitly) identified with its dual using the Riesz map. 
Although appropriate for $L_2$ type spaces, it may obscure the need for a preconditioner in other cases.

\section{A posteriori error estimation} \label{Apostest}
The preconditioned Uzawa scheme yields some approximation $\chi \in \LL_\sigma$ to $\lambda_\sigma$, the latter being the Galerkin approximation to $\lambda$ from $\LL_\sigma$.
To asses the quality of both of these approximations we derive a posteriori error estimators for 
$\|\lambda_\sigma -\chi\|_\LL$ and $\|\lambda-\lambda_\sigma\|_\LL$.
It is natural to expect that such estimators depends on $u^\chi$ or $u^{\lambda_\sigma}$.
However, since only their approximation $\tilde u$ is available, we derive instead estimators in terms of $\tilde u$ and show that they are reliable and efficient under the assumption that the error in $\tilde u$ is sufficiently small in a relative sense.

\begin{proposition} \label{apost} For $\sigma \in {\mathcal S}$, let $\chi \in \LL_\sigma$ and $\tilde u \in \UU$ be approximations to $\lambda_\sigma$ and $u^{\chi}$, respectively. Then it holds that
\begin{align} \label{first1}
\|\lambda_\sigma-\chi\|_\LL \eqsim \|u^{\lambda_\sigma}-u^\chi\|_\UU  \eqsim \sup_{0 \neq \mu \in \LL_\sigma}
\frac{b(u^\chi,\mu)+g(\mu)}{(M_\sigma \mu)(\mu)^{\frac{1}{2}}},\\ \label{first2}
\left|\sup_{0 \neq \mu \in \LL_\sigma}
\frac{b(u^\chi,\mu)+g(\mu)}{(M_\sigma \mu)(\mu)^{\frac{1}{2}}}-\sqrt{\langle {\bf M}^{-1}_\sigma {\bf r},{\bf r}\rangle}  \right| \lesssim 
\|u^\chi-\tilde u\|_\UU,
\end{align}
where ${\bf r}:= (B \tilde u+g)(\Phi_\sigma)$, and furthermore that
\begin{align} \label{second1}
&\|\lambda -\lambda_\sigma\|_\LL \eqsim \|u-u^{\lambda_\sigma}\|_\UU \eqsim \|Bu^{\lambda_\sigma}+g\|_{\LL'},\\ \label{second2}
&\left|\|Bu^{\lambda_\sigma}+g\|_{\LL'}-\|B \tilde u+g\|_{\LL'}\right| \lesssim  \|u^{\lambda_\sigma}-\tilde u\|_\UU.
\end{align}
 
So if$\frac{\|u^\chi-\tilde u\|_\UU}{\sqrt{\langle {\bf M}^{-1}_\sigma {\bf r},{\bf r}\rangle}}$ or $\frac{ \|u^{\lambda_\sigma}-\tilde u\|_\UU}{\|B \tilde u+g\|_{\LL'}}$ are sufficiently small, then $\|\lambda_\sigma-\chi\|_\LL \eqsim  \sqrt{\langle {\bf M}^{-1}_\sigma {\bf r},{\bf r}\rangle} $ or $\|\lambda -\lambda_\sigma\|_\LL \eqsim \|B \tilde u+g\|_{\LL'}$.
\end{proposition}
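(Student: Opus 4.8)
The plan is to establish the four displayed chains \eqref{first1}--\eqref{second2} separately, in each case exploiting that the Schur complement $S_\sigma$ is coercive (so that its square root induces an equivalent norm on $\LL_\sigma$), that $M_\sigma$ is a uniform preconditioner satisfying \eqref{precond}, and that the maps $\chi \mapsto u^\chi$ and $w \mapsto (Bw+g)(\Phi_\sigma)$ are affine and Lipschitz. All the $\eqsim$'s are norm equivalences with constants depending only on the coercivity/boundedness constants of $a$ and $b$, the inf-sup constant, and $\low,\upp$; none on $\sigma$.

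\textbf{Chain \eqref{first1}.} For the first $\eqsim$: by \eqref{not3}, $u^{\lambda_\sigma}-u^\chi = A^{-1}B'(\chi-\lambda_\sigma)$, so $a(u^{\lambda_\sigma}-u^\chi,u^{\lambda_\sigma}-u^\chi) = (B'(\chi-\lambda_\sigma))(A^{-1}B'(\chi-\lambda_\sigma)) = (S(\chi-\lambda_\sigma))(\chi-\lambda_\sigma)$, and then the Lemma gives $\|u^{\lambda_\sigma}-u^\chi\|_\UU^2 \eqsim a(u^{\lambda_\sigma}-u^\chi,u^{\lambda_\sigma}-u^\chi) = (S(\chi-\lambda_\sigma))(\chi-\lambda_\sigma) \eqsim \|\chi-\lambda_\sigma\|_\LL^2$. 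For the second $\eqsim$: since $\chi,\lambda_\sigma \in \LL_\sigma$, we may replace $S$ by $S_\sigma$ in $(S_\sigma(\chi-\lambda_\sigma))(\mu)$ for $\mu \in \LL_\sigma$. Using the defining relations \eqref{gal} for $\lambda_\sigma$ and \eqref{not3} for $u^\chi$, one computes $(S_\sigma(\chi-\lambda_\sigma))(\mu) = (S_\sigma\chi)(\mu) - (BA^{-1}f+g)(\mu) = (Bu^\chi + g)(\mu)\cdot(-1)\cdots$ — concretely, $(S_\sigma\chi - I_\sigma'(BA^{-1}f+g))(\mu) = -(b(u^\chi,\mu)+g(\mu))$ (the sign following from $b(v,\mu)=(B'\mu)(v)$ and $u^\chi = A^{-1}(f-B'\chi)$). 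Hence $\|\chi-\lambda_\sigma\|_\LL \eqsim \sqrt{(S_\sigma(\chi-\lambda_\sigma))(\chi-\lambda_\sigma)} \eqsim \sup_{0\neq\mu\in\LL_\sigma}\frac{(S_\sigma(\chi-\lambda_\sigma))(\mu)}{\sqrt{(M_\sigma\mu)(\mu)}} = \sup_{0\neq\mu\in\LL_\sigma}\frac{|b(u^\chi,\mu)+g(\mu)|}{\sqrt{(M_\sigma\mu)(\mu)}}$, where the middle equivalence is the standard fact that the $M_\sigma$-dual norm of $S_\sigma(\cdot)$ is equivalent to the $\LL$-norm, uniformly by \eqref{precond}.

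\textbf{Chain \eqref{first2}.} Rewriting the supremum in coordinates via $\cF_\sigma$: for $\mu = \cF_\sigma\bm{\mu}$, $(M_\sigma\mu)(\mu) = \langle {\bf M}_\sigma\bm\mu,\bm\mu\rangle$ and $b(u^\chi,\mu)+g(\mu) = (Bu^\chi+g)(\cF_\sigma\bm\mu) = \langle (Bu^\chi+g)(\Phi_\sigma),\bm\mu\rangle$, so the supremum equals $\sup_{\bm\mu}\frac{\langle (Bu^\chi+g)(\Phi_\sigma),\bm\mu\rangle}{\sqrt{\langle {\bf M}_\sigma\bm\mu,\bm\mu\rangle}} = \sqrt{\langle {\bf M}_\sigma^{-1}(Bu^\chi+g)(\Phi_\sigma),(Bu^\chi+g)(\Phi_\sigma)\rangle}$. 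Subtracting $\sqrt{\langle{\bf M}_\sigma^{-1}{\bf r},{\bf r}\rangle}$ with ${\bf r}=(B\tilde u+g)(\Phi_\sigma)$ and using the reverse triangle inequality in the ${\bf M}_\sigma^{-1}$-norm bounds the difference by $\sqrt{\langle{\bf M}_\sigma^{-1}{\bf s},{\bf s}\rangle}$ with ${\bf s}=(B(u^\chi-\tilde u))(\Phi_\sigma)$; this in turn equals $\sup_{0\neq\mu\in\LL_\sigma}\frac{b(u^\chi-\tilde u,\mu)}{\sqrt{(M_\sigma\mu)(\mu)}} \leq \low^{-1/2}\|B\|\,\|u^\chi-\tilde u\|_\UU$ by \eqref{precond} and boundedness of $b$, giving \eqref{first2}.

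\textbf{Chains \eqref{second1}--\eqref{second2}.} These mirror the above at the continuous (infinite-dimensional) level. For \eqref{second1}: $u-u^{\lambda_\sigma} = A^{-1}B'(\lambda_\sigma-\lambda)$ yields $\|u-u^{\lambda_\sigma}\|_\UU^2 \eqsim (S(\lambda-\lambda_\sigma))(\lambda-\lambda_\sigma) \eqsim \|\lambda-\lambda_\sigma\|_\LL^2$ as before; and since $\lambda$ solves $S\lambda = BA^{-1}f+g$ one gets, for $\mu\in\LL$, $(S(\lambda-\lambda_\sigma))(\mu) = (BA^{-1}f+g)(\mu) - (S\lambda_\sigma)(\mu) = -(Bu^{\lambda_\sigma}+g)(\mu)$ (same computation as in \eqref{first1} but with no restriction on $\mu$ and $\lambda_\sigma$ in place of $\chi$), so $\|\lambda-\lambda_\sigma\|_\LL \eqsim \|S(\lambda-\lambda_\sigma)\|_{\LL'} = \|Bu^{\lambda_\sigma}+g\|_{\LL'}$. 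For \eqref{second2}, the reverse triangle inequality in $\|\cdot\|_{\LL'}$ gives $\big|\|Bu^{\lambda_\sigma}+g\|_{\LL'} - \|B\tilde u+g\|_{\LL'}\big| \leq \|B(u^{\lambda_\sigma}-\tilde u)\|_{\LL'} \leq \|B\|\,\|u^{\lambda_\sigma}-\tilde u\|_\UU$. Finally the concluding ``so if \ldots'' sentence is immediate: if, say, $\|u^\chi-\tilde u\|_\UU / \sqrt{\langle{\bf M}_\sigma^{-1}{\bf r},{\bf r}\rangle}\leq c$ for a small enough $c$, then \eqref{first2} shows $\sqrt{\langle{\bf M}_\sigma^{-1}{\bf r},{\bf r}\rangle}$ and the supremum in \eqref{first1} differ by at most a constant fraction of either, hence are $\eqsim$, and composing with \eqref{first1} gives $\|\lambda_\sigma-\chi\|_\LL\eqsim\sqrt{\langle{\bf M}_\sigma^{-1}{\bf r},{\bf r}\rangle}$; the other case is identical using \eqref{second1}--\eqref{second2}.

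The routine parts are the norm-equivalence bookkeeping and the coordinate rewriting; the one place demanding care is getting the algebraic identities $(S_\sigma\chi - I_\sigma'(BA^{-1}f+g))(\mu) = -(b(u^\chi,\mu)+g(\mu))$ and its continuous analogue exactly right (signs and the interplay between $A^{-1}$, $B'$, and the definition of $u^\chi$), since everything downstream hinges on identifying the Schur residual with the computable quantity $Bu^\chi+g$. I expect no genuine obstacle beyond that.
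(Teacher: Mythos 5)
Your proof is correct, but it is organized differently from the paper's. You run everything through the Schur complement: the first equivalence in \eqref{first1} via the energy identity $a(u^{\lambda_\sigma}-u^\chi,u^{\lambda_\sigma}-u^\chi)=(S(\chi-\lambda_\sigma))(\chi-\lambda_\sigma)$ together with the coercivity lemma for $S$, and the second equivalence by identifying $\sup_{\mu\in\LL_\sigma}\frac{b(u^\chi,\mu)+g(\mu)}{(M_\sigma\mu)(\mu)^{1/2}}$ as the $M_\sigma$-dual norm of the discrete Schur residual $S_\sigma\chi-I_\sigma'(BA^{-1}f+g)$, using \eqref{gal}, \eqref{not3} and the uniform spectral equivalences $S_\sigma\eqsim R_{\LL_\sigma}\eqsim M_\sigma$. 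The paper instead proves the first equivalence by comparing the $\UU'$-norms of $A(u^{\lambda_\sigma}-u^\chi)$ and $B'(\chi-\lambda_\sigma)$, and obtains the second from the \emph{uniform well-posedness of the semi-discrete saddle point problem} \eqref{not4}: the pair $(u^{\lambda_\sigma}-u^\chi,\lambda_\sigma-\chi)$ is bounded by the dual norm of its residual, whose only nonzero component is $\mu\mapsto g(\mu)+b(u^\chi,\mu)$; the same argument at the continuous level gives \eqref{second1}. So the paper's key ingredient is the inf-sup stability of the (semi-)discrete mixed system, treating the $u$- and $\lambda$-errors jointly, whereas yours is Lemma 3.1 plus \eqref{precond}, treating them separately; your route is slightly more self-contained (it never invokes the semi-discrete well-posedness remark), while the paper's is shorter and transfers verbatim between the discrete and continuous cases. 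The coordinate computation for \eqref{first2} and the reverse triangle inequalities for \eqref{first2}, \eqref{second2} coincide with the paper's, only in a different order. One small slip: with $S\lambda=BA^{-1}f+g$ one gets $(S(\lambda-\lambda_\sigma))(\mu)=+(Bu^{\lambda_\sigma}+g)(\mu)$, not the minus sign you wrote (the sign flips relative to your discrete identity because there the roles of exact solution and perturbation are reversed); this is harmless since only $\|S(\lambda-\lambda_\sigma)\|_{\LL'}$ enters.
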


\begin{remark} 
In applications, $\tilde u$ will be a Galerkin approximation to $u^{\chi}$. For our fictitious domain application, in Sect.~\ref{apost_inner} an a posteriori error estimator for $\|u^{\chi}-\tilde u\|_\UU$  or $\|u^{\lambda_\sigma}-\tilde u\|_\UU$ (modulo `data oscillation') will be given to assess the smallness
of  $\frac{\|u^\chi-\tilde u\|_\UU}{\sqrt{\langle {\bf M}^{-1}_\sigma {\bf r},{\bf r}\rangle}}$ or $\frac{\|u^{\lambda_\sigma}-\tilde u\|_\UU}{\|B \tilde u+g\|_{\LL'}}$.
\end{remark}

\begin{proof}[Proof of Proposition~\ref{apost}] The validity of the first $\eqsim$-symbol in \eqref{first1} follows from
$$
\sup_{0 \neq v \in \UU}\frac{a(u^{\lambda_\sigma}-u^\chi,v)}{\|v\|_\UU}=\sup_{0 \neq v \in \UU}\frac{b(\chi-\lambda_\sigma,v)}{\|v\|_\UU},
$$
the boundedness and coercivity of $a$, and the boundedness and `inf-sup condition' satisfied by $b$.
 The well-posedness, uniform in $\sigma \in {\mathcal S}$, of the semi-discrete saddle-point problem shows that
\begin{align*}
\|\lambda_\sigma-\chi\|_\LL + \|u^{\lambda_\sigma}-u^\chi\|_\UU &\eqsim \sup_{0 \neq (v,\mu) \in \UU \times \LL_\sigma}
\frac{a(u^{\lambda_\sigma}-u^\chi,v)+b(v,\lambda_\sigma-\chi)+b(u^{\lambda_\sigma}-u^\chi,\mu)}{\|v\|_\UU+\|\mu\|_\LL}\\
& =\sup_{0 \neq \mu \in \LL_\sigma} \frac{g(\mu)+b(u^\chi,\mu)}{\|\mu\|_\LL} \eqsim 
\sup_{0 \neq \mu \in \LL_\sigma} \frac{g(\mu)+b(u^\chi,\mu)}{(M_\sigma \mu)(\mu)^{\frac{1}{2}}}
\end{align*}
by \eqref{precond}. The boundedness of $b$ shows that
$$
\left|\sup_{0 \neq \mu \in \LL_\sigma} \frac{g(\mu)+b(u^\chi,\mu)}{(M_\sigma \mu)(\mu)^{\frac{1}{2}}}-\sup_{0 \neq \mu \in \LL_\sigma} \frac{g(\mu)+b(\tilde u,\mu)}{(M_\sigma \mu)(\mu)^{\frac{1}{2}}}\right| \lesssim \|u^\chi-\tilde u\|_\UU.
$$
The proof of \eqref{first2} is completed by 
\begin{align*}
\sup_{0 \neq \mu \in \LL_\sigma}
\frac{g(\mu)+b(\tilde u,\mu)}{(M_\sigma \mu)(\mu)^{\frac{1}{2}}}
& = \sup_{0 \neq \mu \in \LL_\sigma}\frac{(B \tilde u+g)(\mu)}{(M_\sigma \mu)(\mu)^{\frac{1}{2}}} \\
&\stackrel{\mu=\cF_\sigma {\bf m}}{=} \sup_{0 \neq {\bf m} \in \R^{\dim \LL_\sigma}} \frac{\langle {\bf r},{\bf m}\rangle}{\langle {\bf M}_\sigma{\bf m},{\bf m}\rangle^{\frac12}} = \|{\bf M}_\sigma^{-\frac{1}{2}} {\bf  r}\|.
\end{align*}

Using the same arguments one infers the first $\eqsim$-symbol in \eqref{second1} and
$$
\|\lambda -\lambda_\sigma\|_\LL + \|u-u^{\lambda_\sigma}\|_\UU \eqsim  \|Bu^{\lambda_\sigma}+g\|_{\LL'}.
$$
Now \eqref{second2}  is obvious.
\end{proof}

\section{Nested inexact preconditioned Uzawa iteration} \label{NestedUz}
Returning to the preconditioned Uzawa iteration \eqref{Uzawa}, in order to arrive at an implementable method we will allow for $u^{\lambda_\sigma^{(j)}}$ to be replaced by an approximation.
Furthermore, eventually aiming at a method of optimal computational complexity, we will combine the preconditioned Uzawa iteration with the concept of \emph{nested iteration}: 
Let 
$\{0\}=\LL_{\sigma_0} \subset \LL_{\sigma_1} \subset \cdots \subset \LL$ be such that for some constants $\zeta>1$, $\Cst=\Cst(f,g)>0$ (with $\Cst(\xi f, \xi g)=|\xi| \Cst(f, g)$), it holds that
\be \label{10}
\|\lambda -\lambda_{\sigma_i}\|_\LL \leq \Cst \zeta^{-i}.
\ee

We consider the \emph{nested inexact preconditioned Uzawa iteration} that, with $\lambda_{\sigma_0}^{(K)}=\lambda_{\sigma_0}=0$, for $i=1,2,\cdots$ produces $(\lambda_{\sigma_i}^{(j)})_{0 \leq j \leq K}$ defined by
$$
 \lambda_{\sigma_i}^{(j)}=\left\{
\begin{array}{ll} 
\lambda_{\sigma_{i-1}}^{(K)} & j=0, \\
\lambda_{\sigma_{i}}^{(j-1)}+\beta M_{\sigma_i}^{-1} I_{\sigma_i}' (B u^{(i,j-1)} + g) & 1 \leq j \leq K,
\end{array}
\right.
$$
where $u^{(i,j-1)} \in \UU$ is such that
\be \label{11}
\|u^{\lambda_{\sigma_i}^{(j-1)}}-u^{(i,j-1)}\|_\UU \leq \Cst \zeta^{-i}.
\ee
In the next two sections, such $u^{(i,j-1)}$ will be found as Galerkin approximations to $u^{\lambda_{\sigma_i}^{(j-1)}}$ w.r.t. adaptively generated partitions.
Below, for $K$ a sufficiently large constant, we derive an upper bound for $\|\lambda_{\sigma_i}-\lambda^{(K)}_{\sigma_i}\|_\LL$ that is of the same order as the upper bound for $\|\lambda -\lambda_{\sigma_i}\|_\LL$ from \eqref{10}.

\begin{lemma} \label{basic}
With $\beta$ and $\rho$ from \eqref{alpha_rho}, 
given a constant $M>\frac{\beta \|B\|_{\cL(\UU,\LL')}}{(1-\rho)\low}$, let $K=K(M)$ be a sufficiently large constant such that ${\textstyle \frac{1}{\sqrt{\low}}} \big[ \rho^K \sqrt{\upp}((1+\zeta)+M\zeta) +{\textstyle \frac{1}{1-\rho}} {\textstyle \frac{\beta}{\sqrt{\low}}} \|B\|_{\cL(\UU,\LL')} \big] \leq M$. Then, assuming \eqref{10} and \eqref{11}, we have
$$
\|\lambda_{\sigma_i}-\lambda^{(j)}_{\sigma_i}\|_\LL \leq 
 {\textstyle \frac{1}{\sqrt{\low}}} \big[ \rho^j \sqrt{\upp}((1+\zeta)+M\zeta) +{\textstyle \frac{1}{1-\rho}} {\textstyle \frac{\beta}{\sqrt{\low}}} \|B\|_{\cL(\UU,\LL')} \big] \Cst \zeta^{-i} \quad\big(\lesssim \Cst \zeta^{-i}\big)
$$
$(i \geq 1,\, 0 \leq j \leq K)$, and so in particular,
$$
\|\lambda_{\sigma_i}-\lambda^{(K)}_{\sigma_i}\|_\LL \leq M  \Cst \zeta^{-i} \qquad(i \geq 0).
$$
Furthermore, for $1 \leq j \leq K$
$$
\|u-u^{(i,j-1)}\|_\UU \leq \|A^{-1}B^{'}\|_{\cL(\LL,\UU)}\big(\|\lambda-\lambda_{\sigma_{i}}\|_\LL+\|\lambda_{\sigma_{i}}-\lambda_{\sigma_i}^{(j-1)}\|_\LL\big)+
\Cst \zeta^{-i} \lesssim \Cst \zeta^{-i}.
$$
\end{lemma}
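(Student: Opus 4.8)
\ The plan is to establish all three displayed bounds simultaneously by an induction on $i$, driven by three facts: (a) each \emph{exact} Uzawa step contracts by $\rho$ in the level-$i$ preconditioner norm; (b) replacing $u^{\lambda_{\sigma_i}^{(j-1)}}$ by $u^{(i,j-1)}$ perturbs an iterate by at most $\tfrac{\beta}{\sqrt{\low}}\|B\|_{\cL(\UU,\LL')}\Cst\zeta^{-i}$ in that norm, via \eqref{11}; and (c) the start-up error $\lambda_{\sigma_{i-1}}^{(K)}-\lambda_{\sigma_i}$ is controlled in $\|\cdot\|_\LL$ using the level-$(i-1)$ bound and \eqref{10}. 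Throughout I would write $\|\mu\|_{M_{\sigma_i}}:=(M_{\sigma_i}\mu)(\mu)^{1/2}$, so that \eqref{precond} gives $\sqrt{\low}\,\|\mu\|_\LL\le\|\mu\|_{M_{\sigma_i}}\le\sqrt{\upp}\,\|\mu\|_\LL$ on $\LL_{\sigma_i}$; every $\sqrt{\low}$ and $\sqrt{\upp}$ in the statement comes from exactly one passage between these two norms. The base case $i=0$ is trivial since $\lambda_{\sigma_0}=\lambda_{\sigma_0}^{(K)}=0$.

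For the inductive step I would first isolate one inexact iteration. Introduce the \emph{exact} update $\bar\lambda_{\sigma_i}^{(j)}:=\lambda_{\sigma_i}^{(j-1)}+\beta M_{\sigma_i}^{-1}I_{\sigma_i}'\big(Bu^{\lambda_{\sigma_i}^{(j-1)}}+g\big)$. Since $Bu^{\chi}+g=BA^{-1}f+g-S\chi$ (cf.\ \eqref{not3}), the map $\mu\mapsto\bar\lambda$ is precisely the damped preconditioned Richardson iteration \eqref{Uzawa} for the fixed-level system \eqref{gal}, which --- by the contraction property recorded right after \eqref{alpha_rho} --- reduces the $M_{\sigma_i}$-norm of the error towards its fixed point $\lambda_{\sigma_i}$ by the factor $\rho$; hence $\|\bar\lambda_{\sigma_i}^{(j)}-\lambda_{\sigma_i}\|_{M_{\sigma_i}}\le\rho\,\|\lambda_{\sigma_i}^{(j-1)}-\lambda_{\sigma_i}\|_{M_{\sigma_i}}$. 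For the perturbation $\lambda_{\sigma_i}^{(j)}-\bar\lambda_{\sigma_i}^{(j)}=\beta M_{\sigma_i}^{-1}I_{\sigma_i}'B\big(u^{(i,j-1)}-u^{\lambda_{\sigma_i}^{(j-1)}}\big)$, testing $M_{\sigma_i}(\lambda_{\sigma_i}^{(j)}-\bar\lambda_{\sigma_i}^{(j)})$ against $\lambda_{\sigma_i}^{(j)}-\bar\lambda_{\sigma_i}^{(j)}$ itself and using \eqref{precond} and \eqref{11} gives $\|\lambda_{\sigma_i}^{(j)}-\bar\lambda_{\sigma_i}^{(j)}\|_{M_{\sigma_i}}\le\tfrac{\beta}{\sqrt{\low}}\|B\|_{\cL(\UU,\LL')}\Cst\zeta^{-i}$. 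A triangle inequality then yields the one-step recursion $\|\lambda_{\sigma_i}^{(j)}-\lambda_{\sigma_i}\|_{M_{\sigma_i}}\le\rho\,\|\lambda_{\sigma_i}^{(j-1)}-\lambda_{\sigma_i}\|_{M_{\sigma_i}}+\tfrac{\beta}{\sqrt{\low}}\|B\|_{\cL(\UU,\LL')}\Cst\zeta^{-i}$ for $1\le j\le K$, and summing the resulting geometric series gives $\|\lambda_{\sigma_i}^{(j)}-\lambda_{\sigma_i}\|_{M_{\sigma_i}}\le\rho^{j}\,\|\lambda_{\sigma_i}^{(0)}-\lambda_{\sigma_i}\|_{M_{\sigma_i}}+\tfrac{1}{1-\rho}\tfrac{\beta}{\sqrt{\low}}\|B\|_{\cL(\UU,\LL')}\Cst\zeta^{-i}$.

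Next I would bound the start-up error. For $i\ge1$, $\lambda_{\sigma_i}^{(0)}=\lambda_{\sigma_{i-1}}^{(K)}\in\LL_{\sigma_{i-1}}\subset\LL_{\sigma_i}$, so $\|\lambda_{\sigma_i}^{(0)}-\lambda_{\sigma_i}\|_{M_{\sigma_i}}\le\sqrt{\upp}\,\|\lambda_{\sigma_{i-1}}^{(K)}-\lambda_{\sigma_i}\|_\LL$; splitting the latter as $(\lambda_{\sigma_{i-1}}^{(K)}-\lambda_{\sigma_{i-1}})+(\lambda_{\sigma_{i-1}}-\lambda)+(\lambda-\lambda_{\sigma_i})$ and invoking the level-$(i-1)$ bound $\|\lambda_{\sigma_{i-1}}-\lambda_{\sigma_{i-1}}^{(K)}\|_\LL\le M\Cst\zeta^{-(i-1)}$ together with \eqref{10} at levels $i-1$ and $i$ gives $\|\lambda_{\sigma_{i-1}}^{(K)}-\lambda_{\sigma_i}\|_\LL\le\big((1+\zeta)+M\zeta\big)\Cst\zeta^{-i}$. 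Substituting into the previous display and converting back via $\|\cdot\|_\LL\le\tfrac{1}{\sqrt{\low}}\|\cdot\|_{M_{\sigma_i}}$ yields precisely the first claimed bound; its value at $j=K$ is $\le M\Cst\zeta^{-i}$ by the choice of $K=K(M)$, which is the second bound and closes the induction (the hypothesis $M>\tfrac{\beta\|B\|_{\cL(\UU,\LL')}}{(1-\rho)\low}$ is exactly what makes such a $K$ exist, since $\rho<1$). Finally, the $\UU$-estimate follows from $u=u^\lambda=A^{-1}(f-B'\lambda)$ and $u^{\lambda_{\sigma_i}^{(j-1)}}=A^{-1}(f-B'\lambda_{\sigma_i}^{(j-1)})$, hence $u-u^{\lambda_{\sigma_i}^{(j-1)}}=A^{-1}B'(\lambda_{\sigma_i}^{(j-1)}-\lambda)$: bounding its $\UU$-norm by $\|A^{-1}B'\|_{\cL(\LL,\UU)}\big(\|\lambda-\lambda_{\sigma_i}\|_\LL+\|\lambda_{\sigma_i}-\lambda_{\sigma_i}^{(j-1)}\|_\LL\big)$, adding $\|u^{\lambda_{\sigma_i}^{(j-1)}}-u^{(i,j-1)}\|_\UU\le\Cst\zeta^{-i}$ from \eqref{11}, and using \eqref{10} and the first bound for the final $\lesssim$.

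I expect the only genuinely delicate point to be the bookkeeping of the \emph{level-dependent} preconditioner norms: the contraction in (a) lives only in $\|\cdot\|_{M_{\sigma_i}}$, whereas the start-up vector $\lambda_{\sigma_{i-1}}^{(K)}$ is naturally measured in $\|\cdot\|_\LL$, and one must pass through $\sqrt{\upp}$ exactly once (on the start-up term) and back through $1/\sqrt{\low}$ exactly once (at the very end) and nowhere else; producing a slightly different constant there --- say an extra $\sqrt{\upp}$ on the inexactness contribution --- would break the self-consistency of the induction, which hinges on the inequality defining $K$.
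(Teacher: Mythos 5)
Your argument is correct and follows essentially the same route as the paper's proof: a one-step recursion in the level-$i$ preconditioner norm (contraction factor $\rho$ plus the $\tfrac{\beta}{\sqrt{\low}}\|B\|_{\cL(\UU,\LL')}\Cst\zeta^{-i}$ perturbation from \eqref{11}), the start-up bound $\sqrt{\upp}((1+\zeta)+M\zeta)\Cst\zeta^{-i}$ via the three-term splitting and the level-$(i-1)$ bound, conversion back with $1/\sqrt{\low}$, and the same triangle-inequality argument for the $\UU$-estimate, with identical constants throughout. The only difference is presentational: you make the induction on $i$ and the perturbation estimate (via the auxiliary exact update $\bar\lambda_{\sigma_i}^{(j)}$) explicit, where the paper states the recursion directly.
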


\begin{proof} For $i \geq 1$, define $\|\mu\|_{\sigma_i}:=(M_{\sigma_i} \mu)(\mu)^{\frac{1}{2}}$ ($\mu \in \Lambda_{\sigma_i}$). Then, for $1 \leq j \leq K$,
$$
\|\lambda_{\sigma_i}-\lambda_{\sigma_i}^{(j)}\|_{\sigma_i} \leq \rho \|\lambda_{\sigma_i}-\lambda_{\sigma_i}^{(j-1)}\|_{\sigma_i} +{\textstyle \frac{\beta}{\sqrt{\low}}} \|B\|_{\cL(\UU,\LL')} \Cst \zeta^{-i},
$$
where to arrive at the last term we used \eqref{11} and that the norm on $\LL_{\sigma_i}'$ dual to $\|\cdot\|_{\sigma_i}$ is at most a factor $1/\sqrt{\low}$ larger that the norm on $\LL_{\sigma_i}'$ dual to $\|\cdot\|_\LL$.
By \eqref{10} and induction, we have 
\begin{align*}
\|\lambda_{\sigma_i}-\lambda_{\sigma_{i}}^{(0)}\|_{\sigma_i} & = \|\lambda_{\sigma_i}-\lambda_{\sigma_{i-1}}^{(K)}\|_{\sigma_i} \\ &\leq \sqrt{\upp}(\|\lambda_{\sigma_i}-\lambda\|_\LL+\|\lambda-\lambda_{\sigma_{i-1}}\|_\LL+\|\lambda_{\sigma_{i-1}}-\lambda_{\sigma_{i-1}}^{(K)}\|_\LL)\\
& \leq \sqrt{\upp}((1+\zeta)+M\zeta)\Cst\zeta^{-i},
\end{align*}
and so for $0 \leq j \leq K$,
\be \label{9}
\begin{split}
\|\lambda_{\sigma_i}-\lambda_{\sigma_i}^{(j)}\|_\LL & \leq {\textstyle \frac{1}{\sqrt{\low}}} \|\lambda_{\sigma_i}-\lambda_{\sigma_i}^{(j)}\|_{\sigma_i}\\ &\leq
 {\textstyle \frac{1}{\sqrt{\low}}} \big[ \rho^j \sqrt{\upp}((1+\zeta)+M\zeta) +{\textstyle \frac{1}{1-\rho}} {\textstyle \frac{\beta}{\sqrt{\low}}} \|B\|_{\cL(\UU,\LL')} \big] \Cst \zeta^{-i},
\end{split}
\ee
which completes the proof of the first two statements by definition of $M$.

The second statement follows from
\begin{align*}
\|u-&u^{(i,j-1)}\|_\UU \leq \|u-u^{\lambda_{\sigma_i}^{(j-1)}}\|_\UU+\|u^{\lambda_{\sigma_i}^{(j-1)}}-u^{(i,j-1)}\|_\UU\\
& \leq \|A^{-1}B^{'}\|_{\cL(\LL,\UU)}\big(\|\lambda-\lambda_{\sigma_{i}}\|_\LL+\|\lambda_{\sigma_{i}}-\lambda_{\sigma_i}^{(j-1)}\|_\LL\big)+
\Cst \zeta^{-i} 
\end{align*}
together with \eqref{10} and \eqref{9}. \qedhere
\end{proof}

\section{Inner elliptic solver}\label{s:inner}
Inside the nested inexact preconditioned Uzawa iteration, we need to find a sufficiently accurate approximation $u^{(i,j-1)}$ for $u^{\lambda^{(j-1)}_{\sigma_i}}$, cf. \eqref{11}.
This $u^{\lambda^{(j-1)}_{\sigma_i}}$ is the solution in $\UU$ of the elliptic problem $a(u^\chi,v)=f(v)-b(v,\chi)$ ($v \in \UU$), cf. \eqref{not3}, with $\chi$ reading as $\lambda_{\sigma_i}^{(j-1)}$.
In the application of the fictitious domain method, this problem reads as solving $u^{\lappie} \in H^1_0(\Omega)$ that satisfies
\be \label{ellip}
\int_\Omega \nabla u^{\lappie} \cdot \nabla v \,dx=\int_\Omega \ff v \, dx+\int_\gamma \lappie v \, ds \quad(v \in H^1_0(\Omega)).
\ee
Recall that $\Omega \subset \R^2$, $\gamma \subset \Omega$ is a Lipschitz curve, and $\ff \in L_2(\Omega)$.
For the moment, we consider this problem for some arbitrary, but fixed $\chi \in L_2(\Omega)$.
The discussion how to deal with the fact that $\chi=\lambda^{(j-1)}_{\sigma_i}$ varies with $i$ and $j$ will be postponed to Sect.~\ref{Svaryingrhs}.

For solving \eqref{ellip} we will apply an adaptive linear finite element method. The adaptive triangulations will be generated by newest vertex bisection.

\subsection{Newest vertex bisection}
We recall some properties of newest vertex bisection. Proofs can be found on several places in the literature, e.g. in \cite{21, 249.86}.
Let $\tria_\bot$ 
be a fixed conforming `bottom' triangulation of $\Omega$. Let the assignment of the newest vertices in $\tria_\bot$ be such that if for $T,T' \in \tria_\bot$ the edge $T \cap T'$ is opposite to the newest vertex in $T$, then it is opposite to the newest vertex in $T'$. In \cite{21}, it was shown that such an assignment always exists.

The infinite family of triangulations that can be created from $\tria_\bot$ by newest vertex bisection is \emph{uniformly shape regular} (only dependent on $\tria_\bot$).
The subset of this family of triangulations that additionally is \emph{conforming} will be denoted as $\bbT$.
For $\tria, \tria^* \in \bbT$, we write $\tria \preceq \tria^*$ ($\tria \prec \tria^*$) if $\tria^*$ is a (strict) refinement of $\tria$.
For $\tria, \tria^* \in \bbT$, we will denote the smallest common refinement of $\tria$ and $\tria^*$ as $\tria \oplus \tria^*$. It is a triangulation in $\bbT$, and
$$
\# \tria \oplus \tria^*  \leq \# \tria + \#\tria^*- \# \tria_\bot.
$$

For any collection $\omega$ of triangles, let $\nodes(\omega)$ the set of vertices of $T \in \omega$.
For $ \tria \in \bbT$ and $z \in \nodes(\tria)$, let $\phi_z=\phi_{\tria,z}$ denote the continuous piecewise linear function w.r.t. $\tria$ that satisfies $\phi_{z}(z')=\delta_{z z'}$ ($z' \in \nodes(\tria)$).
We denote by $\Gamma(\tria)$ the set of all edges of $\tria$ that are not on $\partial\Omega$.
We set $\omega_z=\omega_{\tria,z}:=\supp \phi_z$, and let $\Gamma(\omega_z)$ denote the collection of edges of $\tria$ that are not on $\partial \omega_z$.

For $\tria \in \bbT$ and $\cM \subset \nodes(\tria)$, we let
$$\refine(\tria,\cM)$$
denote the procedure that produces the smallest triangulation in $\bbT$ in which for any $z \in \cM$ any $\tria \ni T \subset \omega_z$ has been replaced by at least four subtriangles.
The following theorem is an easy consequence of \cite[Thm. 2.4]{21}.

\begin{theorem} \label{bdd} Let $(\tria_k)_{k \geq 0}$  defined by $\tria_0=\tria_\bot$ and $\tria_{k+1} :=\refine(\tria_k,\cM_k)$ for some $\cM_k \subset \nodes(\tria_k)$. Then
$$
\# \tria_k -\# \tria_\bot \lesssim  \sum_{j=0}^{k-1} \# \cM_j.
$$
\end{theorem}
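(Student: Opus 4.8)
The plan is to reduce the claim to the quantitative complexity estimate of \cite[Thm.~2.4]{21}, which bounds the number of triangles added by a sequence of newest vertex bisection refinements in terms of the total number of \emph{bisected triangles} (or, equivalently up to shape-regularity constants, the total number of \emph{marked elements}) accumulated over all steps. The only work is to translate our node-based \refine\ routine into that element-based framework and to control, uniformly in $k$, how many elements a single call $\refine(\tria_k,\cM_k)$ marks in terms of $\#\cM_k$.

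First I would observe that $\refine(\tria,\cM)$ is obtained by first marking, for each $z\in\cM$, all triangles $T\in\tria$ with $T\subset\omega_z$, and then applying the standard \emph{completion} procedure (the one analyzed in \cite{21}) to the resulting set of marked elements, possibly iterated a bounded number of times so that each such $T$ is split into at least four descendants. By uniform shape regularity of the family generated from $\tria_\bot$, the number of triangles of $\tria$ contained in a single patch $\omega_z$ is bounded by an absolute constant $c=c(\tria_\bot)$; hence the set $\cM_k'\subset\tria_k$ of elements marked in the $k$-th step satisfies $\#\cM_k'\le c\,\#\cM_k$. Iterating ``split into four'' a fixed number of times multiplies this by another constant depending only on $\tria_\bot$, so still $\#\cM_k'\lesssim\#\cM_k$ with a constant depending only on $\tria_\bot$.

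Next I would invoke \cite[Thm.~2.4]{21} verbatim: for the sequence $\tria_0=\tria_\bot$, $\tria_{k+1}=\refine(\tria_k,\cM_k)$, the accumulated overhead of the completion steps is controlled, giving
\[
\#\tria_k-\#\tria_\bot \;\lesssim\; \sum_{j=0}^{k-1}\#\cM_j' \;\lesssim\; \sum_{j=0}^{k-1}\#\cM_j,
\]
with the hidden constant depending only on $\tria_\bot$ (in particular, only on the initial newest-vertex labeling, for which a compatible choice exists by \cite{21}). This is exactly the asserted bound.

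The only genuine point requiring care — and the one I would single out as the main obstacle — is the bookkeeping that the per-step inflation from ``patch of a node'' to ``set of marked elements'' and from ``one bisection'' to ``at least four subtriangles'' is bounded by a constant that is \emph{independent of $k$}; this is where uniform shape regularity of the whole newest-vertex-bisection family is essential, since without it the number of elements in a patch $\omega_z$ could in principle grow along the sequence. Everything else is a direct citation of \cite[Thm.~2.4]{21}, so I would keep the write-up short and lean on that reference.
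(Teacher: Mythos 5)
Your proposal is correct and follows essentially the same route as the paper, which simply states the result as an easy consequence of \cite[Thm.~2.4]{21}: you reduce the node-based \refine\ to element-based marking plus completion and invoke that theorem. The bookkeeping you supply (patch cardinality bounded via uniform shape regularity of the NVB family, and the ``at least four subtriangles'' requirement costing only a fixed number of extra bisections per marked element) is exactly the routine translation the paper leaves implicit.
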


\subsection{A posteriori error estimation for the `inner' elliptic problem} \label{apost_inner}
Standard a posteriori error estimation for the Poisson problem requires the forcing function to be in $L_2(\Omega$). Our problem \eqref{ellip} does not satisfy this condition because of its second forcing term.
We will therefore use results from \cite{45.47} about a posteriori error estimation for general forcing functions in $H^{-1}(\Omega)$, and their implementable specializations to forcing functions of types
$v \mapsto \int_\Omega h v \,dx$ and $v \mapsto \int_\gamma h v \,ds$ where, for some $p>1$, $h \in L_p(\Omega)$ or $h \in L_p(\gamma)$, respectively.
In view of our application, however, for simplicity we consider the case $p=2$ only.

For $\tria \in \bbT$, we set $\UU_\tria:=\{w \in H_0^1(\Omega) \colon w|_T \in \cP_1(T)\}$.
We let
$$
\solve(\tria,\ff,\lappie)
$$
denote the procedure that computes the Galerkin approximation $u^{\lappie}_\tria$ from $\UU_\tria$ to the solution $u^{\lappie}$ of \eqref{ellip} .
For $U \in \UU_\tria$, $z \in \nodes(\tria)$, we set
\begin{align*}
j(U,\tria,z) &:= \Big(\sum_{e \in \Gamma(\omega_{\tria,z})} |e|^2 \llbracket \nabla U \cdot {\bf n}_e \rrbracket^2\Big)^{\frac{1}{2}},\\
d_\Omega(\ff,\tria,z)&:= \Big(|\omega_{\tria,z}| \int_\Omega |\ff|^2 \phi_{\tria,z} \,d x\Big)^{\frac{1}{2}},\\
d_\gamma(\lappie,\tria,z)&:= \Big(|\omega_{\tria,z}|^{\frac{1}{2}} \int_\gamma |\lappie|^2 \phi_{\tria,z} \,ds\Big)^{\frac{1}{2}},\\
e(U,\ff,\lappie,\tria,z)&:=\Big(j(U,\tria,z)^2+d_\Omega(\ff,\tria,z)^2+d_\gamma(\lappie,\tria,z)^2\Big)^{\frac{1}{2}},
\end{align*}
where $\llbracket \nabla U \cdot {\bf n}_e \rrbracket$ denotes the jump in the normal derivative of $U$ over $e$, $|e|:=\meas(e)$, and $|\omega_{\tria,z}|:=\max_{\tria \ni T \subset \omega_z} \meas(T)$.
For $\cM \subset \nodes(\tria)$ we set
\begin{align} \nonumber
J(U,\tria,\cM)&:=\big(\sum_{z \in \cM} j(U,\tria,z)^2\big)^{\frac{1}{2}}\\ \nonumber
\cD_\Omega(\ff,\tria,\cM)&:=\big(\sum_{z \in \cM} d_\Omega(\ff,\tria,z)^2\big)^{\frac{1}{2}},\\ \nonumber
\cD_\gamma(\lappie,\tria,\cM)&:=\big(\sum_{z \in \cM} d_\gamma(\lappie,\tria,z)^2\big)^{\frac{1}{2}},\\ \nonumber
\cD(\ff,\lappie,\tria,\cM)&:=\big(\cD_\Omega(\ff,\tria,\cM)^2+\cD_\gamma(\lappie,\tria,\cM)^2\big)^{\frac{1}{2}},\\ \label{ellipt_est}
\cE(U,\ff,\lappie,\tria,\cM&):=\big(\sum_{z \in \cM} e(U,\ff,\lappie,\tria,z)^2\big)^{\frac{1}{2}}.
\end{align}
In the last five notations, we will sometimes drop the argument $\cM$ from the left hand side in case it is equal to $\nodes(\tria)$.
In the last notation, sometimes we drop the argument $U$ at both sides in case it is equal to $u^{\lappie}_\tria$.

Finally, we set
$$
\err(f,\chi,\tria):=\big(|u^{\lappie}-u^{\lappie}_\tria|_{H^1(\Omega)}^2+\cD(\ff,\lappie,\tria)^2\big)^{\frac{1}{2}},
$$
which is sometimes called the \emph{total error}.
At a number places it will be used that $u^{\lappie}_\tria$ is the best approximation to $u^{\lappie}$ from $\UU_\tria$ w.r.t. semi-norm $|\cdot|_{H^1(\Omega)}$.

\begin{remark}\label{r:area}
Since neighboring triangles in $\tria \in \bbT$ have uniformly comparable sizes, and the valence of any $z \in \nodes(\tria)$ is uniformly bounded, it holds that $|\omega_{\tria,z}| \eqsim \meas(\omega_{\tria,z})$.  In \cite{45.47} the last expression is taken as the definition of $|\omega_{\tria,z}|$.
We have chosen for the current definition of $|\omega_{\tria,z}|$ because of its property that 
for $\cM \subset \nodes(\tria)$, $\bbT \ni \tria^* \succeq \refine(\tria,\cM)$,
$z \in \cM$, and $z^* \in \nodes(\tria^*)$ with $\omega_{\tria^*,z^*} \subset \omega_{\tria,z}$, it holds that 
$|\omega_{\tria^*,z^*}| \leq\frac{1}{4} |\omega_{\tria,z}|$, which will be used to demonstrate Lemma~\ref{estimatorreduction}.
(In contrast, note that under these premises, for $z \in \partial\Omega$ it is possible 
that $\meas(\omega_{\tria^*,z^*})=\meas(\omega_{\tria,z})$).
\end{remark}

Given $\tria \in \bbT$, $U \in \UU_\tria$, $\ff \in L_2(\Omega)$, and $\lappie \in L_2(\gamma)$, we let
$$
\estimate(U,\ff,\lappie,\tria)
$$
denote the procedure that computes $(e(U,\ff,\lappie,\tria,z))_{z \in \nodes(\tria)}$. 

In view of \eqref{ellip} setting $h(v):=\int_\Omega \ff v \, dx+\int_\gamma \lappie v \, ds$, from applications of Sobolev's embedding theorem and Poincar\'{e}'s inequality one may infer that
\be \label{1}
\|h\|_{H^{-1}(\omega_z)}:=\sup_{0 \neq v \in H^1_0(\omega_z)}\frac{h(v)}{|v|_{H^1(\omega_z)}} \lesssim \Big(d_\Omega(\ff,\tria,z)^2+d_\gamma(\lappie,\tria,z)^2\Big)^{\frac{1}{2}}
\ee
(cf. \cite[Sect.~7.1]{45.47}).

With the forcing term in \eqref{ellip} reading as an \emph{arbitrary} $h \in H^{-1}(\Omega)$, and denoting the resulting solution simply by $u$, the following two lemmas were shown in \cite{45.47}:

\begin{lemma}[{\cite[Lemma~3.2]{45.47}}, localized upper bound] For $\tria \preceq \tria^* \in \bbT$, it holds that
$$
|u_{\tria^*}-u_\tria|_{H^1(\Omega)} \lesssim \Big(  \sum_{z \in \nodes(\tria\setminus \tria^*)} j(u_\tria,\tria,z)^2 + \|h\|_{H^{-1}(\omega_z)}^2\Big)^{\frac{1}{2}},
$$
and so in particular
$$
|u-u_\tria|_{H^1(\Omega)} \lesssim \Big(  \sum_{z \in \nodes(\tria)} j(u_\tria,\tria,z)^2 + \|h\|_{H^{-1}(\omega_z)}^2\Big)^{\frac{1}{2}}.
$$
\end{lemma}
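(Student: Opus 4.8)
\textbf{Proof proposal for the localized upper bound (Lemma 3.2 of \cite{45.47}).}

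The plan is to construct an explicit $H^1_0(\Omega)$-test function that realizes (up to a constant) the error $|u_{\tria^*}-u_\tria|_{H^1(\Omega)}$, and then to localize it using the nodal partition of unity $\{\phi_{\tria,z}\}_{z\in\nodes(\tria)}$. First I would set $v:=u_{\tria^*}-u_\tria\in\UU_{\tria^*}\subset H^1_0(\Omega)$, and use Galerkin orthogonality of both $u_\tria$ and $u_{\tria^*}$: since $u_{\tria}$ is the Galerkin solution on $\tria$ and $v$ need not lie in $\UU_\tria$, we have $|v|_{H^1(\Omega)}^2 = a(u_{\tria^*}-u_\tria,v) = h(v) - a(u_\tria,v)$. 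The key observation is that $h(w)-a(u_\tria,w)=0$ for all $w\in\UU_\tria$, so we may subtract the $\UU_\tria$-quasi-interpolant (Scott--Zhang or the Clément-type operator adapted to newest-vertex-bisection meshes) $I_\tria v$ from $v$ without changing the value: $|v|_{H^1(\Omega)}^2 = h(v-I_\tria v)-a(u_\tria,v-I_\tria v)$. Because $\tria^*$ refines $\tria$ only in the patches of the marked nodes, $v-I_\tria v$ is supported on $\bigcup_{z\in\nodes(\tria\setminus\tria^*)}\omega_{\tria,z}$ — this is the step where the choice of a local quasi-interpolant that reproduces $\UU_\tria$-functions and is locally supported is essential.

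Next I would decompose $v-I_\tria v=\sum_z (v-I_\tria v)\phi_{\tria,z}$ via the partition of unity and distribute $h$ and $a(u_\tria,\cdot)$ over the patches. For the elliptic term, integration by parts elementwise turns $a(u_\tria,(v-I_\tria v)\phi_z)$ into a sum of edge-jump contributions $\int_e\llbracket\nabla u_\tria\cdot{\bf n}_e\rrbracket(\cdot)$ over $e\in\Gamma(\omega_{\tria,z})$ (the interior Laplacian vanishes on piecewise linears), which after Cauchy--Schwarz and a scaled trace inequality is bounded by $j(u_\tria,\tria,z)\,|v|_{H^1(\omega_{\tria,z})}$; here one uses $\|(v-I_\tria v)\phi_z\|_{L_2(e)}\lesssim|e|^{1/2}|v|_{H^1(\omega_{\tria,z})}$, combining local approximation properties of $I_\tria$ with shape regularity. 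For the data term, $|h((v-I_\tria v)\phi_z)|\le\|h\|_{H^{-1}(\omega_{\tria,z})}\,|(v-I_\tria v)\phi_z|_{H^1(\omega_{\tria,z})}\lesssim\|h\|_{H^{-1}(\omega_{\tria,z})}\,|v|_{H^1(\omega_{\tria,z})}$, again by the $H^1$-stability of $w\mapsto(w-I_\tria w)\phi_z$ on the patch. Summing over $z\in\nodes(\tria\setminus\tria^*)$, applying discrete Cauchy--Schwarz, and using finite overlap of the patches $\{\omega_{\tria,z}\}$ gives $|v|_{H^1(\Omega)}^2\lesssim\big(\sum_z j(u_\tria,\tria,z)^2+\|h\|_{H^{-1}(\omega_z)}^2\big)^{1/2}\,|v|_{H^1(\Omega)}$, and dividing by $|v|_{H^1(\Omega)}$ yields the claim; the global bound follows by taking $\tria^*$ to be (the limit of) arbitrarily fine refinements and using density, or more carefully by the analogous argument with $v=u-u_\tria$ and $I_\tria$ the quasi-interpolant of the exact solution.

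I expect the main obstacle to be the careful handling of the $H^{-1}(\omega_z)$-localization: one must verify that $\|h\|_{H^{-1}(\omega_z)}$ as defined with the $|\cdot|_{H^1(\omega_z)}$-seminorm in the denominator is the right local dual norm, that the partition-of-unity multiplication $w\mapsto w\phi_z$ maps $H^1_0(\omega_z)$ (or the relevant broken space) boundedly into itself with a constant depending only on shape regularity — which requires an inverse-type estimate $\|\nabla\phi_z\|_{L_\infty}\lesssim\diam(\omega_z)^{-1}$ together with a local Poincaré inequality to absorb the $L_2$-term — and that no boundary nodes spoil the argument (for $z\in\partial\Omega$ the function $\phi_z$ is not part of a partition of unity of $\Omega$, but $v-I_\tria v$ still vanishes on $\partial\Omega$, so one restricts attention to interior nodes or uses that $I_\tria$ preserves the homogeneous boundary condition). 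Since the statement is quoted verbatim from \cite[Lemma~3.2]{45.47}, I would in practice cite that reference for the full details and only indicate the structure above.
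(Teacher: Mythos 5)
The paper does not prove this lemma at all---it is quoted verbatim from \cite[Lemma~3.2]{45.47}---so there is no in-paper argument to compare against; your reconstruction is essentially the standard proof given in that reference and is sound. The one step that genuinely carries the argument, and which you correctly flag, is the choice of the quasi-interpolant: the Scott--Zhang averaging sets must be taken inside elements of $\tria \cap \tria^*$ whenever the node touches an unrefined element, so that $v-I_\tria v$ vanishes on every unrefined element and hence only the nodes $z \in \nodes(\tria\setminus\tria^*)$ survive in the partition-of-unity sum; with that choice the remaining steps (elementwise integration by parts onto the edges of $\Gamma(\omega_{\tria,z})$, the scaled trace and local approximation estimates, the local $H^{-1}(\omega_z)$ duality for $(v-I_\tria v)\phi_z \in H^1_0(\omega_z)$, and finite overlap) go through as you describe, and the global bound follows by the same argument with $v=u-u_\tria$.
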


\begin{lemma}[{\cite[Lemma~3.3]{45.47}}, local lower bound] \label{lower} For $\tria \in \bbT$, $z \in \nodes(\tria)$, $U \in \UU_\tria$, it holds that
$$
j(U,\tria,z) \lesssim |u-U|_{H^1(\omega_z)}+\|h\|_{H^{-1}(\omega_z)}.
$$
\end{lemma}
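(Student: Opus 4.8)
The plan is to carry out the classical Verfürth edge-bubble argument, with the twist that the forcing is treated as an abstract functional $h \in H^{-1}(\Omega)$ throughout, so that no element-residual term appears. Fix an interior edge $e$ of $\tria$ with $e \subset \overline{\omega_{\tria,z}}$, and let $\omega_e$ be the union of the (at most two) triangles of $\tria$ having $e$ as an edge; since $e$ is interior to the patch, $\omega_e \subseteq \omega_{\tria,z}$. Because $U$ is affine on each triangle, $\nabla U$ is piecewise constant and $c_e := \llbracket \nabla U \cdot {\bf n}_e \rrbracket$ is a constant on $e$. Let $b_e$ be the standard edge-bubble function (the product of the two barycentric coordinates associated with the endpoints of $e$), extended by zero outside $\omega_e$, so that $b_e \in H^1_0(\omega_e) \subset H^1_0(\Omega)$, $b_e$ vanishes on every edge of $\tria$ other than $e$, and, by the uniform shape regularity of $\bbT$,
$$
\int_e b_e \, ds \eqsim |e|, \qquad |b_e|_{H^1(\omega_e)} \eqsim 1 .
$$

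Next I would use $v := c_e b_e$ as a test function in the Galerkin residual identity. Since $a(u,v) = h(v)$ and $\Delta U = 0$ elementwise, integrating $\int_{\omega_e} \nabla U \cdot \nabla v$ by parts triangle-by-triangle cancels all boundary contributions except those on $e$, giving
$$
c_e^2 \int_e b_e \, ds = \int_e c_e\, v \, ds = \int_{\omega_e} \nabla U \cdot \nabla v = h(v) - \int_{\omega_e} \nabla(u-U)\cdot \nabla v .
$$
Estimating the right-hand side by $\big( \|h\|_{H^{-1}(\omega_e)} + |u-U|_{H^1(\omega_e)} \big)\, |v|_{H^1(\omega_e)}$, then inserting $|v|_{H^1(\omega_e)} = |c_e|\, |b_e|_{H^1(\omega_e)} \eqsim |c_e|$ and $\int_e b_e \, ds \eqsim |e|$ and dividing by $|c_e|$ (the case $c_e = 0$ being trivial), I obtain
$$
|e|\, \big|\llbracket \nabla U \cdot {\bf n}_e \rrbracket\big| \lesssim \|h\|_{H^{-1}(\omega_e)} + |u-U|_{H^1(\omega_e)} .
$$

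Finally I would square this and sum over the edges $e \in \Gamma(\omega_{\tria,z})$. Their number is uniformly bounded by shape regularity; extension by zero gives $\|h\|_{H^{-1}(\omega_e)} \leq \|h\|_{H^{-1}(\omega_{\tria,z})}$ for each $e$; and the subdomains $\omega_e$ cover $\omega_{\tria,z}$ with uniformly bounded overlap, so $\sum_e |u-U|_{H^1(\omega_e)}^2 \lesssim |u-U|_{H^1(\omega_{\tria,z})}^2$. Combining these yields $j(U,\tria,z)^2 = \sum_e |e|^2 \big|\llbracket \nabla U \cdot {\bf n}_e \rrbracket\big|^2 \lesssim \|h\|_{H^{-1}(\omega_{\tria,z})}^2 + |u-U|_{H^1(\omega_{\tria,z})}^2$, which is the claim after taking square roots.

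The steps are all routine; the one point deserving a little care is the verification that $v = c_e b_e$ is an admissible test function in $H^1_0(\Omega)$ — which holds because an edge interior to the patch is interior to $\Omega$ and $b_e$ vanishes on $\partial\omega_e$, hence on $\omega_e \cap \partial\Omega$ — together with the correct scaling of $\int_e b_e\,ds$ and $|b_e|_{H^1(\omega_e)}$ under the uniform shape regularity of $\bbT$. I expect no genuine obstacle; the only conceptual point, which is exactly what \cite{45.47} exploits, is that keeping $h$ as an $H^{-1}(\Omega)$ functional — rather than splitting it as in the $L_2$-data theory — lets the argument go through with the data contributions $d_\Omega$ and $d_\gamma$ accounted for separately in $\cE$ rather than inside $j$.
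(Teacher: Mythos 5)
Your argument is correct. Note that the paper does not actually prove this lemma: it is quoted verbatim from \cite[Lemma~3.3]{45.47}, so there is no internal proof to compare against. What you wrote is the standard Verf\"{u}rth edge-bubble argument, which is essentially the mechanism behind the cited result: testing with $v=c_e b_e$ for each $e\in\Gamma(\omega_{\tria,z})$, using $\Delta U=0$ elementwise and $a(u,v)=h(v)$, the scalings $\int_e b_e\,ds\eqsim|e|$ and $|b_e|_{H^1(\omega_e)}\eqsim 1$ (the latter is where $d=2$ enters), and then summing over the uniformly many edges of the star with $\omega_e\subseteq\omega_{\tria,z}$ and $\|h\|_{H^{-1}(\omega_e)}\le\|h\|_{H^{-1}(\omega_{\tria,z})}$ by zero extension. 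Two small points of hygiene, both of which you essentially address: the relevant edges are exactly those of $\Gamma(\omega_{\tria,z})$, i.e.\ the edges emanating from $z$ that are not on $\partial\omega_{\tria,z}$ (hence interior, so the jump and the bubble test function are well defined), and the argument uses only the equation for $u$, never Galerkin orthogonality, so it is valid for arbitrary $U\in\UU_\tria$ as the statement requires.
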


Returning to our specific $h(v)=\int_\Omega \ff v \, dx+\int_\gamma \lappie v \, ds$, from \eqref{1} and the previous two lemmas we infer the following two results:
\begin{lemma}[localized upper bound]  \label{localupp} There exists a constant $\Upper$ such that for $\tria \preceq \tria^* \in \bbT$, it holds that
$$
|u^{\lappie}_{\tria^*}-u^{\lappie}_\tria|_{H^1(\Omega)} \leq \Upper \cE(\ff,\lappie,\tria,\nodes(\tria\setminus \tria^*)),
$$
and so in particular,
$$
|u^{\lappie}-u^{\lappie}_\tria|_{H^1(\Omega)} \leq \Upper \cE(\ff,\lappie,\tria).
$$
\end{lemma}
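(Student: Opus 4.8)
The plan is to derive Lemma~\ref{localupp} directly from \cite[Lemma~3.2]{45.47} applied to the concrete forcing functional $h(v)=\int_\Omega \ff v\,dx+\int_\gamma \lappie v\,ds$, using \eqref{1} to replace the abstract dual-norm quantities $\|h\|_{H^{-1}(\omega_z)}$ by the computable local data indicators. First I would observe that $u^\lappie$ is exactly the solution $u$ of the abstract problem for this particular $h$, and that $\UU_\tria=\{w\in H^1_0(\Omega)\colon w|_T\in\cP_1(T)\}$ is the finite element space appearing there, so that $u^\lappie_\tria=u_\tria$ in the notation of \cite{45.47}. Then \cite[Lemma~3.2]{45.47} gives
$$
|u^{\lappie}_{\tria^*}-u^{\lappie}_\tria|_{H^1(\Omega)} \lesssim \Big(\sum_{z \in \nodes(\tria\setminus \tria^*)} j(u^\lappie_\tria,\tria,z)^2 + \|h\|_{H^{-1}(\omega_z)}^2\Big)^{\frac{1}{2}}.
$$

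Next I would insert the bound \eqref{1}, namely $\|h\|_{H^{-1}(\omega_z)}^2 \lesssim d_\Omega(\ff,\tria,z)^2+d_\gamma(\lappie,\tria,z)^2$, into the sum. Combining this with the trivial inequality $j(u^\lappie_\tria,\tria,z)^2 \le j(u^\lappie_\tria,\tria,z)^2+d_\Omega(\ff,\tria,z)^2+d_\gamma(\lappie,\tria,z)^2 = e(u^\lappie_\tria,\ff,\lappie,\tria,z)^2$ (using the definition of $e(\cdot)$ and the convention that the argument $U$ is dropped when it equals $u^\lappie_\tria$), one gets
$$
|u^{\lappie}_{\tria^*}-u^{\lappie}_\tria|_{H^1(\Omega)} \lesssim \Big(\sum_{z\in\nodes(\tria\setminus\tria^*)} e(\ff,\lappie,\tria,z)^2\Big)^{\frac{1}{2}} = \cE(\ff,\lappie,\tria,\nodes(\tria\setminus\tria^*)),
$$
by the definition \eqref{ellipt_est} of $\cE$. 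This is precisely the claimed localized bound with $\Upper$ the absorbed constant; note that all implied constants depend only on $\Omega$, on $\tria_\bot$ through the uniform shape regularity of $\bbT$, and on the Sobolev/Poincar\'e constants used in \eqref{1}, hence are independent of $\tria,\tria^*$. The global statement follows by taking $\tria^*$ to range over all refinements of $\tria$ in $\bbT$ and passing to the limit, or equivalently by invoking the ``in particular'' form of \cite[Lemma~3.2]{45.47} with $\nodes(\tria)$ in place of $\nodes(\tria\setminus\tria^*)$, so that $\cE(\ff,\lappie,\tria,\nodes(\tria))=\cE(\ff,\lappie,\tria)$ per the stated convention of dropping $\cM$ when it equals $\nodes(\tria)$.

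There is essentially no obstacle here: the content is entirely contained in \cite{45.47} and the only work is bookkeeping—checking that the present notation matches, that $\cE$ indeed dominates the right-hand side of \cite[Lemma~3.2]{45.47}, and that the mild mismatch between the two possible definitions of $|\omega_{\tria,z}|$ (area versus max element size, reconciled in Remark~\ref{r:area} via $|\omega_{\tria,z}|\eqsim\meas(\omega_{\tria,z})$) does not affect the $\lesssim$-statements. The one point that deserves a line of care is that the bound \eqref{1} is stated for $\ff\in L_2(\Omega)$ and $\lappie\in L_2(\gamma)$, which is exactly the standing assumption here ($\ff\in L_2(\Omega)$ by hypothesis, and $\lappie\in L_2(\gamma)$ since in this section $\chi$ is taken to be an arbitrary fixed element of $L_2(\Omega)$, in particular its trace on $\gamma$—or rather $\lappie=\chi$ as a function on $\gamma$—lies in $L_2(\gamma)$); thus \eqref{1} applies verbatim and the proof is complete.
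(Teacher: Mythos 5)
Your argument is exactly the paper's: the lemma is stated there as an immediate consequence of \cite[Lemma~3.2]{45.47} together with the bound \eqref{1}, with the jump and data indicators absorbed into $\cE$ and the constant named $\Upper$, and the global form coming from the ``in particular'' part of that cited lemma. Your write-up just makes this bookkeeping explicit (including the harmless $|\omega_{\tria,z}|$ convention and the $L_2(\gamma)$ assumption on $\lappie$), so it is correct and takes essentially the same route.
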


\begin{lemma}[global lower and upper bounds] \label{equiv} There exists a constant $\Lower>0$ such that for $\tria \in \bbT$
$$
\Lower \cE(\ff,\lappie,\tria) \leq 
\err(\ff,\lappie,\tria) \leq \sqrt{(\Upper^2+1)}\, \cE(\ff,\lappie,\tria).
$$
\end{lemma}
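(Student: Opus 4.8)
The plan is to unpack both sides and then read off the two inequalities from Lemmas~\ref{localupp} and~\ref{lower}. First I would record that, under the notational conventions in force (the dropped argument $U$ stands for the Galerkin approximation $u^\chi_\tria$, and the dropped argument $\cM$ stands for $\nodes(\tria)$),
$$
\cE(\ff,\lappie,\tria)^2=\sum_{z\in\nodes(\tria)}e(u^\chi_\tria,\ff,\lappie,\tria,z)^2=J(u^\chi_\tria,\tria)^2+\cD(\ff,\lappie,\tria)^2,
$$
while $\err(\ff,\lappie,\tria)^2=|u^\chi-u^\chi_\tria|_{H^1(\Omega)}^2+\cD(\ff,\lappie,\tria)^2$. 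In particular $\cD(\ff,\lappie,\tria)\le\cE(\ff,\lappie,\tria)$ and $\cD(\ff,\lappie,\tria)\le\err(\ff,\lappie,\tria)$, which will be used repeatedly.

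For the right-hand inequality I would simply invoke the global form of Lemma~\ref{localupp}, namely $|u^\chi-u^\chi_\tria|_{H^1(\Omega)}\le\Upper\,\cE(\ff,\lappie,\tria)$, and combine it with $\cD(\ff,\lappie,\tria)^2\le\cE(\ff,\lappie,\tria)^2$ to obtain $\err(\ff,\lappie,\tria)^2\le\Upper^2\,\cE(\ff,\lappie,\tria)^2+\cE(\ff,\lappie,\tria)^2=(\Upper^2+1)\,\cE(\ff,\lappie,\tria)^2$. That half is immediate and yields exactly the stated constant.

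For the left-hand inequality, since $\cD\le\err$ it suffices to bound $J(u^\chi_\tria,\tria)$ by a constant multiple of $\err(\ff,\lappie,\tria)$. I would apply Lemma~\ref{lower} with $U=u^\chi_\tria$ at every $z\in\nodes(\tria)$, square, and sum over $z$, obtaining
$$
J(u^\chi_\tria,\tria)^2\lesssim\sum_{z\in\nodes(\tria)}|u^\chi-u^\chi_\tria|_{H^1(\omega_z)}^2+\sum_{z\in\nodes(\tria)}\|h\|_{H^{-1}(\omega_z)}^2 .
$$
Using that $\bbT$ is uniformly shape regular (so each triangle of $\tria$ lies in a uniformly bounded number of the patches $\omega_{\tria,z}$), the first sum is $\lesssim|u^\chi-u^\chi_\tria|_{H^1(\Omega)}^2$, while by the estimate~\eqref{1} the second sum is $\lesssim\cD(\ff,\lappie,\tria)^2$. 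Hence $J(u^\chi_\tria,\tria)^2\lesssim\err(\ff,\lappie,\tria)^2$, and then $\cE(\ff,\lappie,\tria)^2=J(u^\chi_\tria,\tria)^2+\cD(\ff,\lappie,\tria)^2\lesssim\err(\ff,\lappie,\tria)^2$, which produces a constant $\Lower>0$ with $\Lower\,\cE(\ff,\lappie,\tria)\le\err(\ff,\lappie,\tria)$.

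The only genuinely delicate point is the finite-overlap bookkeeping in the lower bound: passing from $\sum_z|\cdot|_{H^1(\omega_z)}^2$ to $|\cdot|_{H^1(\Omega)}^2$ and from $\sum_z\|h\|_{H^{-1}(\omega_z)}^2$ to $\cD(\ff,\lappie,\tria)^2$. Both are standard consequences of the uniformly bounded vertex valence in the newest-vertex-bisection family and of~\eqref{1}; everything else is just substitution of the definitions, so I do not expect any real obstruction.
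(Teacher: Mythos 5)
Your proposal is correct and follows essentially the same route as the paper, which gives no separate written proof but states that Lemma~\ref{equiv} is inferred from \eqref{1} together with the localized upper bound (Lemma~\ref{localupp}) and the local lower bound (Lemma~\ref{lower}); your filling-in of the finite-overlap summation of Lemma~\ref{lower} and the identity $\cE^2=J^2+\cD^2$ is exactly the intended argument, and it reproduces the stated constant $\sqrt{\Upper^2+1}$ on the upper side.
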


\subsection{Contraction property}
Further results about the a posteriori estimator established in \cite{45.47} will be combined with standard arguments in adaptive finite element theory to show that a weighted sum of the squared error in the Galerkin solution and the squared error estimator contracts when employing bulk chasing.

Whereas the adaptive finite element method investigated in \cite{45.47} involves an inner loop to reduce data oscillation, this loop will be avoided in our adaptive method.

\begin{lemma}[stability of the jump estimator] \label{stability} There exists a constant $\stab$ such that for $\tria \in \bbT$, $U,W \in \UU_\tria$, it holds that
$$
|J(U,\tria)-J(W,\tria)| \leq \stab |U-W|_{H^1(\Omega)}.
$$
\end{lemma}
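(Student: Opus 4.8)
The plan is to prove the estimate
$$
|J(U,\tria)-J(W,\tria)| \leq \stab\, |U-W|_{H^1(\Omega)}
$$
by first establishing it nodewise, i.e.\ bounding $|j(U,\tria,z)-j(W,\tria,z)|$, and then summing over $z \in \nodes(\tria)$ with a finite-overlap argument. The key observation is that $j(U,\tria,z)$ is, up to the fixed weights $|e|$, the $\ell^2$-norm of the vector of jumps $(\llbracket \nabla U \cdot {\bf n}_e \rrbracket)_{e \in \Gamma(\omega_{\tria,z})}$, so that $U \mapsto j(U,\tria,z)$ is a seminorm on $\UU_\tria$; in particular it satisfies the reverse triangle inequality. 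Hence
$$
|j(U,\tria,z)-j(W,\tria,z)| \leq j(U-W,\tria,z) = \Big(\sum_{e \in \Gamma(\omega_{\tria,z})} |e|^2 \llbracket \nabla(U-W) \cdot {\bf n}_e \rrbracket^2\Big)^{\frac{1}{2}}.
$$

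Next I would bound $j(U-W,\tria,z)$ by $|U-W|_{H^1(\omega_z)}$. Writing $V:=U-W \in \UU_\tria$, on each element $T$ the gradient $\nabla V|_T$ is constant, so for an edge $e$ shared by $T,T'$ the jump $\llbracket \nabla V \cdot {\bf n}_e \rrbracket$ is a difference of two constant vectors dotted with ${\bf n}_e$; a standard scaled trace/inverse estimate on the shape-regular patch $\omega_z$ gives $|e| |\llbracket \nabla V \cdot {\bf n}_e \rrbracket| \lesssim |V|_{H^1(\omega_z)}$ (one can, e.g., bound $|e|^2 \llbracket \nabla V \cdot {\bf n}_e\rrbracket^2$ by $|e| \|\nabla V|_T\|^2 \cdot |e| \eqsim |T|\, \|\nabla V|_T\|^2$ using $|e| \eqsim \diam T$ from shape regularity, which is exactly $|V|_{H^1(T)}^2$). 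Summing over the finitely many edges in $\Gamma(\omega_{\tria,z})$, whose number is uniformly bounded since the valence of $z$ is uniformly bounded (newest vertex bisection from $\tria_\bot$ yields a uniformly shape-regular family), yields
$$
j(U-W,\tria,z) \lesssim |U-W|_{H^1(\omega_z)},
$$
with a constant depending only on $\tria_\bot$.

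Finally I would sum over $z$: by definition $J(U,\tria)^2 = \sum_{z \in \nodes(\tria)} j(U,\tria,z)^2$, and the reverse triangle inequality for the $\ell^2$-norm over $\nodes(\tria)$ gives
$$
|J(U,\tria)-J(W,\tria)| \leq \Big(\sum_{z \in \nodes(\tria)} |j(U,\tria,z)-j(W,\tria,z)|^2\Big)^{\frac{1}{2}} \lesssim \Big(\sum_{z \in \nodes(\tria)} |U-W|_{H^1(\omega_z)}^2\Big)^{\frac{1}{2}}.
$$
Since each triangle $T \in \tria$ lies in $\omega_z$ for exactly the vertices $z$ of $T$ (at most three), the patches $\{\omega_z\}_{z\in\nodes(\tria)}$ have uniformly bounded overlap, so $\sum_z |U-W|_{H^1(\omega_z)}^2 \lesssim |U-W|_{H^1(\Omega)}^2$, and the claim follows with $\stab$ depending only on $\tria_\bot$. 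The main (and only mildly technical) obstacle is the scaled trace/inverse inequality bounding $|e| |\llbracket \nabla V\cdot {\bf n}_e\rrbracket|$ by the local $H^1$-seminorm with a constant uniform over the shape-regular family $\bbT$; everything else is the reverse triangle inequality plus a finite-overlap count, both standard.
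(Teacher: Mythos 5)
Your proposal is correct and follows essentially the same route as the paper: the paper's proof is exactly the reverse triangle inequality $|J(U,\tria)-J(W,\tria)|\leq J(U-W,\tria)$ followed by an application of Lemma~\ref{lower} with $h=0$ (hence $u=0$) and $U$ replaced by $U-W$, which gives the local bound $j(U-W,\tria,z)\lesssim |U-W|_{H^1(\omega_z)}$, and the finite-overlap summation you spell out. The only difference is that where the paper cites Lemma~\ref{lower}, you re-derive this special case by the elementary piecewise-constant-gradient scaling argument, which is fine (just note that the jump on an edge $e$ involves both adjacent triangles, both contained in $\omega_z$, so the bound should use $|V|_{H^1(T)}^2+|V|_{H^1(T')}^2$).
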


\begin{proof} Application of triangle inequalities shows that $|J(U,\tria)-J(W,\tria)|\leq J(U-W,\tria)$. Now the result follows from an application of Lemma~\ref{lower} with `$h$'$=0$, and thus `$u$'$=0$, and `$U$'=$U-W$.
\end{proof}

The next lemma shows reduction of the estimator when employing bulk chasing under the unrealistic assumption that the discrete solution does not change. 
This assumption will be removed later.

\begin{lemma} \label{estimatorreduction} For $\tria \in \bbT$, $\cM \subset \nodes(\tria)$, $U \in \UU_\tria$, and $\bbT \ni \tria^* \succeq \refine(\tria,\cM)$, it holds that
$$
\cE(U,\ff,\lappie,\tria^*)^2 \leq  \cE(U,\ff,\lappie,\tria)^2-  \frac{1}{2}\cE(U,\ff,\lappie,\tria,\cM)^2.
$$
Furthermore, for $\bbT \ni \tria^* \succeq \tria$, it holds that $\cD(\ff,\lappie,\tria^*) \leq \cD(\ff,\lappie,\tria)$.
\end{lemma}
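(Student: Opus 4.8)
The estimator $\cE(U,\ff,\lappie,\tria)^2 = \sum_{z \in \nodes(\tria)} e(U,\ff,\lappie,\tria,z)^2$ splits, at each vertex, into the jump contribution $j(U,\tria,z)^2$ and the data contributions $d_\Omega(\ff,\tria,z)^2 + d_\gamma(\lappie,\tria,z)^2$. Since $U$ is held fixed (it is the \emph{same} function on $\tria$ and on $\tria^*$, which makes sense because $\UU_\tria \subset \UU_{\tria^*}$), the jump term $\llbracket \nabla U \cdot {\bf n}_e\rrbracket$ is unchanged; only the \emph{weights} and the \emph{patches} over which we sum change under refinement. The plan is to bound the contribution of the three pieces separately, using that a marked patch $\omega_{\tria,z}$ ($z \in \cM$) is replaced by subtriangles of at least four times smaller measure.

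First I would prove the monotonicity statement $\cD(\ff,\lappie,\tria^*) \leq \cD(\ff,\lappie,\tria)$ for arbitrary $\bbT \ni \tria^* \succeq \tria$, since it is the easier half and its argument is reused. For this, observe that each $d_\Omega(\ff,\tria,z)^2 = |\omega_{\tria,z}| \int_\Omega |\ff|^2 \phi_{\tria,z}\,dx$ and that $\sum_{z \in \nodes(\tria)} \phi_{\tria,z} = \mathbb{1}_\Omega$ is a partition of unity, so summing over $z$ gives $\cD_\Omega(\ff,\tria)^2 = \int_\Omega |\ff|^2 \big(\sum_z |\omega_{\tria,z}| \phi_{\tria,z}\big)\,dx$. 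Refinement only decreases the piecewise-constant-ish weight $\sum_z |\omega_{\tria,z}|\phi_{\tria,z}$ pointwise — each point $x$ lies in patches whose maximal element size cannot grow — hence $\cD_\Omega$ decreases; the $\cD_\gamma$ term is handled identically with the $|\omega_{\tria,z}|^{1/2}$ weight and an integral over $\gamma$. (One must be a little careful that the new vertices of $\tria^*$ also enter the sum, but their contributions are nonnegative and their patches are contained in old patches, so the bound only improves.)

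Next, for the main inequality, I would partition $\nodes(\tria^*)$ according to whether a vertex $z^*$ has its patch $\omega_{\tria^*,z^*}$ contained in some marked patch $\omega_{\tria,z}$, $z \in \cM$, or not. For the \emph{untouched} region (vertices whose patch meets no marked patch in a refined way) the local data and jump terms are bounded by the corresponding terms on $\tria$ over $\nodes(\tria)\setminus\cM$, by the same weight-monotonicity as above and the fact that those jumps are unchanged. For the \emph{refined} region, Remark~\ref{r:area} gives the crucial factor: $|\omega_{\tria^*,z^*}| \leq \frac14 |\omega_{\tria,z}|$ whenever $\omega_{\tria^*,z^*} \subset \omega_{\tria,z}$ and $z \in \cM$. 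For the data terms this yields a factor $\frac14$ (or $\frac12$ on $\gamma$, from the square-root weight) after summing $\phi_{\tria^*,z^*}$ over the $z^*$ inside a fixed old patch; for the jump term, one uses in addition that refining a marked patch splits every $T \subset \omega_{\tria,z}$ into at least four subtriangles, so $|e|^2$ drops by at least $\frac14$ for the surviving interior edges while genuinely new interior edges carry no jump of the fixed $U$ (it is continuous piecewise linear on $\tria$). Collecting, the total marked contribution on $\tria^*$ is at most $\frac12 \cE(U,\ff,\lappie,\tria,\cM)^2$, and adding the unmarked part bounded by $\cE(U,\ff,\lappie,\tria,\nodes(\tria)\setminus\cM)^2 = \cE(U,\ff,\lappie,\tria)^2 - \cE(U,\ff,\lappie,\tria,\cM)^2$ gives exactly the claimed estimate.

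The main obstacle is the bookkeeping at the interface between marked and unmarked patches: a vertex $z^*$ of $\tria^*$ may sit on the boundary of a marked patch, so that part of $\omega_{\tria^*,z^*}$ is inside $\omega_{\tria,z}$ and part is not, and one must assign its contribution unambiguously and without double counting when summing the partition-of-unity weights. The clean way is to expand everything back to integrals against $\sum_{z^*}(\cdot)\phi_{\tria^*,z^*}$, compare the resulting weight functions on $\tria$ and $\tria^*$ pointwise (they differ only on $\bigcup_{z \in \cM}\omega_{\tria,z}$, where the new weight is at most $\frac14$, resp. $\frac12$, of the old), and only at the very end regroup into vertex sums. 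The jump term requires the analogous but combinatorial version of this argument, for which the shape-regularity of $\bbT$ and the fact that $\refine$ quarters every marked triangle do all the work. I would also double-check the constant: the worst of the three reduction factors is $\frac12$ (from $\cD_\gamma$ and, after shape-regularity constants, possibly from the jump term), which is exactly why the stated bound carries $\frac12$ rather than $\frac14$.
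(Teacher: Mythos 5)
Your plan follows essentially the same route as the paper's proof: expand the data weights against the fine hat functions (partition of unity together with $|\omega_{\tria^*,z^*}|\le\frac14|\omega_{\tria,z}|$ from Remark~\ref{r:area}) to get the factors $\frac14$ for $\cD_\Omega$ and $\frac12$ for $\cD_\gamma$, use that the fixed $U$ jumps only across the coarse skeleton with every marked edge bisected for the jump term, and obtain the $\cD$-monotonicity from the same expansion with $\cM=\emptyset$. One minor bookkeeping point: for the jump estimator the relevant reduction is $\sum_{\{e^*\in\Gamma(\tria^*):\,e^*\subset e\}}|e^*|^2\le\frac12|e|^2$ (summing the at least two subedges, not just the per-subedge factor $\frac14$), and no shape regularity is needed there since each interior coarse edge is simply counted twice, once per endpoint.
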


\begin{proof} For convenience of the reader we collect the arguments for these statement from the proofs of \cite[Lemmas~4.1,~7.1, and Theorem 7.5]{45.47}.

Since the normal derivative of $U$ exhibits jumps only on inter-element boundaries of $\tria$, and the latter belong to exactly two $\omega_z$'s for $z \in \nodes(\tria)$, we have
$$
J(U,\tria^*)^2=2\sum_{e \in \Gamma(\tria)} \Big(\sum_{\{e^* \in \Gamma(\tria^*)\colon e^* \subset e\}} |e^*|^2\Big) \llbracket \nabla U \cdot {\bf n}_e \rrbracket^2.
$$
On the other hand, we have
$$
J(U,\tria)^2=2\sum_{e \in \Gamma(\tria)} |e|^2 \llbracket \nabla U \cdot {\bf n}_e \rrbracket^2.
$$
For any $e \in \Gamma(\tria)$ we have $\sum_{\{e^* \in \Gamma(\tria^*)\colon e^* \subset e\}} |e^*|^2 \leq |e|^2$. Since for $e \in \Gamma(\omega_z)$ for some $z \in \cM$, $\sum_{\{e^* \in \Gamma(\tria^*)\colon e^* \subset e\}} |e^*|^2 \leq \frac{1}{2}|e|^2$, one infers that
\be \label{3}
J(U,\tria^*)^2 \leq \frac{1}{2} J(U,\tria^*,\cM)^2+ J(U,\tria^*,\nodes(\tria)\setminus \cM)^2.
\ee

Next we consider the data oscillation estimators.
Since $\phi_{\tria,z}=\sum_{z^* \in \nodes(\tria^*)} \phi_{\tria,z}(z^*) \phi_{\tria^*,z^*}$, $\sum_{z \in \nodes(\tria)} \phi_{\tria,z}(z^*)=1$ for any $z^*$,
$\phi_{\tria,z} \geq 0$, and
$\phi_{\tria,z}(z^*) \neq 0$ only if $\omega_{\tria^*,z^*} \subset \omega_{\tria,z}$, we have
\be \label{4}
\begin{split}
\hspace*{-3em}\cD_\Omega(\ff,\tria^*)^2=&\sum_{z^* \in \nodes(\tria^*)} |\omega_{\tria^*,z^*}| \int_\Omega |\ff|^2 \phi_{\tria^*,z^*}\,dx\\
 =& \sum_{z^* \in \nodes(\tria^*)} \sum_{z \in \nodes(\tria)} \phi_{\tria,z}(z^*) |\omega_{\tria^*,z^*}| \int_\Omega |\ff|^2 \phi_{\tria^*,z^*}\,dx\\
 =& \sum_{z \in \nodes(\tria)} \sum_{\{z^* \in \nodes(\tria^*)\colon \omega_{\tria^*,z^*} \subset \omega_{\tria,z}\}} \phi_{\tria,z}(z^*) |\omega_{\tria^*,z^*}| \int_\Omega |\ff|^2 \phi_{\tria^*,z^*}\,dx\\
 \leq & \frac{1}{4} \sum_{z \in \cM } |\omega_{\tria,z}| \int_\Omega |\ff|^2 \sum_{z^* \in \nodes(\tria^*)}\phi_{\tria,z}(z^*) \phi_{\tria^*,z^*}\,dx
\\ &+\sum_{z \in \nodes(\tria) \setminus \cM } |\omega_{\tria,z}| \int_\Omega |\ff|^2 \sum_{z^* \in \nodes(\tria^*)} \phi_{\tria,z}(z^*) \phi_{\tria^*,z^*}\,dx\\
=& \frac{1}{4} \sum_{z \in \cM } |\omega_{\tria,z}| \int_\Omega |\ff|^2 \phi_{\tria,z}\,dx
+\sum_{z \in \nodes(\tria) \setminus \cM } |\omega_{\tria,z}| \int_\Omega |\ff|^2 \phi_{\tria,z}\,dx\\
=& \frac{1}{4} \cD_\Omega(\ff,\tria,\cM)^2+\cD_\Omega(\ff,\tria,\nodes(\tria) \setminus \cM)^2.
\end{split}
\ee
Notice that we used our definition of $ |\omega_{\tria^*,z^*}|$, see Remark~\ref{r:area}, to obtain the above inequality.

Since exactly the same arguments show that
\be \label{6}
\cD_\gamma(\lappie,\tria^*)^2 \leq \frac{1}{2} \cD_\gamma(\lappie,\tria,\cM)^2+\cD_\gamma(\lappie,\tria,\nodes(\tria) \setminus \cM)^2,
\ee
and combining the latter with \eqref{3} and \eqref{4} completes the proof of the first statement.

The second statement is an easy consequence of \eqref{4} and \eqref{6} for $\cM=\emptyset$.
\end{proof}

For $(e_z)_{z \in \nodes(\tria)} \subset \R$ and $\theta \in (0,1]$, we let
$$
\cM:=\Mark((e_z)_{z \in \nodes(\tria)} ,\theta)
$$
denote the procedure that outputs a smallest $\cM \subset \nodes(\tria)$ that satisfies the {\em bulk chasing condition} $\sum_{z \in \cM} e_z^2 \geq \theta^2 \sum_{z \in \nodes(\tria)} e_z^2$.

\begin{corollary}[contraction] \label{contraction} Given a constant $\theta \in (0,1]$, there exists constants $\upsilon>0$ and $\alpha<1$ such that for $\tria \in \bbT$, $\cM:=\Mark((e(\ff,\lappie,\tria,z)_{z \in \nodes(\tria)}, \theta)$,
and $\bbT \ni \tria^* \succeq \refine(\tria,\cM)$, it holds that
$$
|u^{\lappie}-u^{\lappie}_{\tria^*}|_{H^1(\Omega)}^2+ \upsilon \cE(\ff,\lappie,\tria^*)^2 \leq \alpha \Big(|u^{\lappie}-u^{\lappie}_\tria|_{H^1(\Omega)}^2+ \upsilon \cE(\ff,\lappie,\tria)^{2}\Big) .
$$
\end{corollary}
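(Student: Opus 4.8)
The plan is to follow the classical Dörfler-marking contraction argument (as in Cascón–Kreuth–Nochetto–Siebert), adapted to the total-error setting of this paper. Write $\tria^*$ for a conforming refinement with $\tria^* \succeq \refine(\tria,\cM)$, abbreviate $u:=u^\lappie$, $u_\tria:=u^\lappie_\tria$, $u_{\tria^*}:=u^\lappie_{\tria^*}$, and set $e_\tria:=|u-u_\tria|_{H^1(\Omega)}$, $e_{\tria^*}:=|u-u_{\tria^*}|_{H^1(\Omega)}$, $\cE_\tria:=\cE(\ff,\lappie,\tria)$, $\cE_{\tria^*}:=\cE(\ff,\lappie,\tria^*)$. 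The two ingredients are, first, the Galerkin-orthogonality–based Pythagoras identity $e_{\tria^*}^2 = e_\tria^2 - |u_{\tria^*}-u_\tria|_{H^1(\Omega)}^2$ (valid since $\UU_\tria \subset \UU_{\tria^*}$ and $u_\tria$, $u_{\tria^*}$ are $|\cdot|_{H^1(\Omega)}$-Galerkin projections), and second, estimator reduction obtained by combining Lemma~\ref{estimatorreduction} (which bounds $\cE(u_\tria,\ff,\lappie,\tria^*)$ in terms of $\cE(u_\tria,\ff,\lappie,\tria)$) with the stability Lemma~\ref{stability} (to pass from the ``frozen solution'' estimator $\cE(u_\tria,\cdot)$ to the genuine estimator $\cE(u_{\tria^*},\cdot)=\cE_{\tria^*}$, at the cost of a term $\propto |u_{\tria^*}-u_\tria|_{H^1(\Omega)}$).

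Concretely, I would proceed as follows. \emph{Step 1 (estimator reduction with frozen solution).} Apply Lemma~\ref{estimatorreduction} with $U=u_\tria$ and the marked set $\cM=\Mark((e(\ff,\lappie,\tria,z))_z,\theta)$ to get $\cE(u_\tria,\ff,\lappie,\tria^*)^2 \le \cE_\tria^2 - \tfrac12 \cE(\ff,\lappie,\tria,\cM)^2$, and invoke the bulk-chasing property $\cE(\ff,\lappie,\tria,\cM)^2 \ge \theta^2 \cE_\tria^2$ to obtain $\cE(u_\tria,\ff,\lappie,\tria^*)^2 \le (1-\tfrac{\theta^2}{2})\cE_\tria^2$. \emph{Step 2 (unfreeze the solution).} Split $\cE_{\tria^*}^2 = J(u_{\tria^*},\tria^*)^2 + \cD(\ff,\lappie,\tria^*)^2$ and note the data-oscillation part $\cD(\ff,\lappie,\tria^*)^2$ is unchanged between the frozen and genuine estimators (it does not see $U$), while for the jump part Lemma~\ref{stability} on $\tria^*$ gives $J(u_{\tria^*},\tria^*) \le J(u_\tria,\tria^*) + \stab\,|u_{\tria^*}-u_\tria|_{H^1(\Omega)}$. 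Using Young's inequality $(a+b)^2 \le (1+\delta)a^2 + (1+\delta^{-1})b^2$ with a parameter $\delta>0$ to be fixed, this yields
\be
\cE_{\tria^*}^2 \le (1+\delta)\cE(u_\tria,\ff,\lappie,\tria^*)^2 + (1+\delta^{-1})\stab^2\,|u_{\tria^*}-u_\tria|_{H^1(\Omega)}^2
\le (1+\delta)\big(1-\tfrac{\theta^2}{2}\big)\cE_\tria^2 + (1+\delta^{-1})\stab^2\,|u_{\tria^*}-u_\tria|_{H^1(\Omega)}^2.
\ee
\emph{Step 3 (combine).} Multiply the last display by $\upsilon>0$, add the Pythagoras identity from Step~1, and choose $\delta$ small enough that $(1+\delta)(1-\tfrac{\theta^2}{2})=:\tilde\alpha<1$, then choose $\upsilon$ small enough that $\upsilon(1+\delta^{-1})\stab^2 \le 1$, so the $|u_{\tria^*}-u_\tria|^2$ terms cancel against the $-|u_{\tria^*}-u_\tria|^2$ from Pythagoras. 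This gives
$$
e_{\tria^*}^2 + \upsilon\cE_{\tria^*}^2 \le e_\tria^2 + \upsilon\tilde\alpha\cE_\tria^2.
$$
\emph{Step 4 (absorb the solution error into the estimator).} This is a pure-error-reduction-only bound; to get genuine contraction of the \emph{sum} I would invoke the global upper bound (second part of Lemma~\ref{localupp}), $e_\tria^2 \le \Upper^2 \cE_\tria^2$, to write, for any $\mu\in(0,1)$, $e_\tria^2 = (1-\mu)e_\tria^2 + \mu e_\tria^2 \le (1-\mu)e_\tria^2 + \mu\Upper^2\cE_\tria^2$, and hence $e_\tria^2 + \upsilon\tilde\alpha\cE_\tria^2 \le (1-\mu)e_\tria^2 + (\mu\Upper^2 + \upsilon\tilde\alpha)\cE_\tria^2$. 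Picking $\mu$ small enough (and possibly re-adjusting $\upsilon$) so that both $1-\mu<1$ and $\mu\Upper^2 + \upsilon\tilde\alpha < \upsilon$ holds, one arrives at $e_{\tria^*}^2 + \upsilon\cE_{\tria^*}^2 \le \alpha\,(e_\tria^2 + \upsilon\cE_\tria^2)$ with $\alpha:=\max\{1-\mu,\ (\mu\Upper^2+\upsilon\tilde\alpha)/\upsilon\}<1$.

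The main obstacle is the bookkeeping in Steps~3–4: the three free parameters $\delta$, $\upsilon$, $\mu$ must be chosen in the right order so that the solution-increment term $|u_{\tria^*}-u_\tria|_{H^1(\Omega)}^2$ is fully absorbed by the Pythagoras surplus \emph{and} the surviving estimator coefficient stays strictly below $\upsilon$; one must check these constraints are simultaneously satisfiable (they are, since after fixing $\delta$ to make $\tilde\alpha<1$, $\upsilon$ can be shrunk to kill the $\stab^2$ term, and then $\mu$ shrunk relative to $\upsilon$). A secondary subtlety is making sure the data-oscillation components of the frozen and unfrozen estimators truly coincide and that Lemma~\ref{stability} applies verbatim on $\tria^*$ with $u_\tria\in\UU_\tria\subset\UU_{\tria^*}$ and $u_{\tria^*}\in\UU_{\tria^*}$; both follow directly from the definitions. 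Everything else is routine, and no inner data-oscillation loop is needed precisely because Lemma~\ref{estimatorreduction} already delivers reduction of the \emph{full} estimator $\cE$ (jump plus oscillation) under a single $\refine(\tria,\cM)$ call.
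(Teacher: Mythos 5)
Your proposal is correct and follows essentially the same route as the paper's proof (the CKNS argument cited there): stability plus Young's inequality to unfreeze the estimator, Lemma~\ref{estimatorreduction} with bulk chasing for the factor $1-\tfrac{\theta^2}{2}$, the Pythagoras identity to absorb the increment $|u^{\lappie}_{\tria^*}-u^{\lappie}_\tria|_{H^1(\Omega)}^2$, and the global upper bound of Lemma~\ref{localupp} to turn error reduction of the estimator into contraction of the sum. The only cosmetic difference is your extra parameter $\mu$ and the final $\max$ in Step~4, where the paper fixes $\delta$ via $(1+\delta)(1-\tfrac{\theta^2}{2})=1-\tfrac{\theta^2}{4}$ and distributes the gain directly to obtain $\alpha=1-\theta^2/4\big/(1+\Upper^2/\upsilon)$.
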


\begin{proof}
This proof follows the arguments introduced in \cite{35.936}.

Applications of Lemma~\ref{stability} and that of Young's inequality show that for any $\delta>0$,
$$
\cE(\ff,\lappie,\tria^*)^2 \leq (1+\delta) \cE(u^{\lappie}_\tria,\ff,\lappie,\tria^*)^2+(1+\delta^{-1})\stab|u^{\lappie}_{\tria^*}-u^{\lappie}_\tria|_{H^1(\Omega)}^2.
$$
Using that $\cE(u^{\lappie}_\tria,\ff,\lappie,\tria^*)^2 \leq (1-\frac{1}{2}\theta^2)  \cE(\ff,\lappie,\tria)^2$ by Lemma~\ref{estimatorreduction},
choosing $\delta$ such that $(1+\delta)(1-\frac{1}{2}\theta^2)= (1-\frac{1}{4}\theta^2)$,
using that
$$
|u^{\lappie}-u^{\lappie}_{\tria^*}|_{H^1(\Omega)}^2=|u^{\lappie}-u^{\lappie}_\tria|_{H^1(\Omega)}^2-|u^{\lappie}_{\tria^*}-u^{\lappie}_\tria|_{H^1(\Omega)}^2,
$$
and taking $\upsilon$ such that $\upsilon (1+\delta^{-1})\stab =1$, we find that
\begin{align*}
|u^{\lappie}-u^{\lappie}_{\tria^*}|_{H^1(\Omega)}^2+ &\upsilon \cE(\ff,\lappie,\tria^*)^2 \leq 
|u^{\lappie}-u^{\lappie}_\tria|_{H^1(\Omega)}^2+ \upsilon (1-\frac{1}{4}\theta^2)\cE(\ff,\lappie,\tria)^2\\
&\leq \Big(1-\frac{\theta^2/4}{1+\Upper/\upsilon}\Big)\Big(|u^{\lappie}-u^{\lappie}_\tria|_{H^1(\Omega)}^2+ \upsilon \cE(\ff,\lappie,\tria)^2\Big)
\end{align*}
by an application of Lemma~\ref{localupp}.
\end{proof}

\subsection{Convergence with the best possible rate}
For $s>0$ we define the approximation class $\cA^s$ as the collection of $w \in H^1_0(\Omega)$ for which
$$
|w|_{\cA^s}:=\sup_{N \in \N} N^s \min_{\{\tria\in \bbT \colon \#\tria-\#\tria_{\bot}\leq N\}} |w-w_\tria|_{H^1(\Omega)} <\infty.
$$
Classical estimates show that for $s \leq \frac{1}{2}$ , $H^1_0(\Omega) \cap H^{1+2s}(\Omega) \subset \cA^s$ where it is sufficient to consider uniform refinements of $\tria_{\bot}$.
Obviously the class $\cA^s$ contains many more
functions, which is the reason to consider adaptive methods in the first
place. 
As shown in \cite{22}, for $s \in (0,\frac{1}{2}]$, the Besov space $B^{1+2s}_{\tau,q}(\Omega)$ is contained in $\cA^s$ for any $q>0$, $\tau>(s+\frac{1}{2})^{-1}$.
Although $\cA^s$ is non-empty for any $s>0$ as it contains $\UU_\tria$ for any $\tria \in \bbT$, even for $C^\infty(\Omega)$-functions only for $s \leq \frac{1}{2}$ membership in $\cA^s$ is guaranteed.
For that reason, it is no real restriction to consider only $s \in (0,\frac{1}{2}]$ in the following.

Besides the approximated classes $\cA^s$, we need approximation classes for both data terms of the inner elliptic problem \eqref{ellip}.
For $\ff \in L_2(\Omega)$ and $s>0$, we say that $\ff \in \cB^s_\Omega$ when
$$
|\ff|_{\cB_\Omega^s}:=\sup_{N \in \N} N^s \min_{\{\tria\in \bbT \colon \#\tria-\#\tria_{\bot}\leq N \}} \cD_\Omega(\ff,\tria) <\infty.
$$
Similarly, for $\lappie \in L_2(\gamma)$, we say that $\lappie \in \cB^s_\gamma$ when
$$
|\lappie|_{\cB_\gamma^s}:=\sup_{N \in \N} N^s \min_{\{\tria\in \bbT \colon \#\tria-\# \tria_{\bot}\leq N\}} \cD_\gamma(\lappie,\tria) <\infty.
$$
The approximation classes $\cB_\Omega^s$ and $\cB_\gamma^s$ for the data should not be confused with Besov spaces.

The next, crucial result shows that the data oscillation terms $\cD_\Omega(\ff,\tria)$ and $\cD_\gamma(\lappie,\tria)$ can be reduced at rate $\frac{1}{2}$.
Knowing this result, standard arguments introduced in \cite{249.86} will show that the usual adaptive finite element method driven by bulk chasing on the estimator $\cE$ converges with the best possible rate $s \in (0,\frac{1}{2}]$.

\begin{theorem}[{\cite[Theorems 7.3 and 7.4]{45.47}}] \label{rhsclasses} Functions $\ff \in L_2(\Omega)$ and $\lappie \in L_2(\gamma)$ are in $\cB^{\frac{1}{2}}_\Omega$ and $\cB^{\frac{1}{2}}_\gamma$, respectively, with $|\ff|_{\cB_\Omega^{\frac{1}{2}}} \lesssim \|\ff\|_{L_2(\Omega)}$ and $|\lappie|_{\cB_\gamma^{\frac{1}{2}}} \lesssim \|\lappie\|_{L_2(\gamma)}$, only dependent on $\tria_{\bot}$ and, for the second case, the length of $\gamma$.
\end{theorem}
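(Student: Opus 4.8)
The plan is to prove both claims simultaneously by establishing, for an \emph{arbitrary} conforming $\tria \in \bbT$, a bound of the form $\cD_\Omega(\ff,\tria) \lesssim (\#\tria)^{-\frac12} \|\ff\|_{L_2(\Omega)}$ and $\cD_\gamma(\lappie,\tria) \lesssim (\#\tria)^{-\frac12}\|\lappie\|_{L_2(\gamma)}$ \emph{after one uniform refinement of $\tria_\bot$ to a level dictated by $N$}; in fact the cleanest route is to test the definitions of $\cB^{\frac12}_\Omega$ and $\cB^{\frac12}_\gamma$ against the sequence of \emph{uniform} dyadic refinements $\tria^{(k)}$ of $\tria_\bot$, for which $\#\tria^{(k)}\eqsim 4^k \#\tria_\bot$ and $|\omega_{\tria^{(k)},z}| \eqsim 4^{-k}\diam(\Omega)^2$ for all interior $z$, with an analogous mesh-size decay of order $2^{-k}$ for the patches meeting $\gamma$. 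One then only needs to estimate $\cD_\Omega(\ff,\tria^{(k)})$ and $\cD_\gamma(\lappie,\tria^{(k)})$ in terms of the $L_2$-norms and $4^{-k}$, $2^{-k}$ respectively, and read off the rate $s=\frac12$ from the relation between $\#\tria^{(k)}$ and the mesh level $k$.

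\textbf{Key steps.} First, for the domain term: using $0 \le \phi_{\tria,z}\le 1$, $\sum_z \phi_{\tria,z}\equiv 1$, and the finite overlap of the patches $\{\omega_{\tria,z}\}_{z}$ (uniform shape regularity and bounded valence, cf. Remark~\ref{r:area}), I would write
$$
\cD_\Omega(\ff,\tria)^2 = \sum_{z\in\nodes(\tria)} |\omega_{\tria,z}| \int_\Omega |\ff|^2 \phi_{\tria,z}\,dx \le \Big(\max_{z} |\omega_{\tria,z}|\Big) \sum_{z\in\nodes(\tria)} \int_\Omega |\ff|^2 \phi_{\tria,z}\,dx \lesssim \Big(\max_{z}|\omega_{\tria,z}|\Big)\, \|\ff\|_{L_2(\Omega)}^2 ,
$$
so the essential point is purely geometric: $\max_z |\omega_{\tria,z}| \lesssim (\#\tria)^{-1}\,\meas(\Omega)$ holds for the uniform refinement sequence because there all triangles have comparable area $\eqsim \meas(\Omega)/\#\tria$. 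Taking the minimum over $\{\tria \colon \#\tria-\#\tria_\bot \le N\}$ — realized by the uniform refinement with that many elements — gives $\cD_\Omega(\ff,\tria)\lesssim N^{-\frac12}\|\ff\|_{L_2(\Omega)}$, i.e. $\ff\in\cB^{\frac12}_\Omega$.

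\textbf{The boundary term and the main obstacle.} For $\cD_\gamma(\lappie,\tria)^2 = \sum_z |\omega_{\tria,z}|^{\frac12}\int_\gamma |\lappie|^2 \phi_{\tria,z}\,ds$, the same overlap/partition-of-unity reasoning reduces matters to controlling $\max_{z\colon \omega_{\tria,z}\cap\gamma\neq\emptyset} |\omega_{\tria,z}|^{\frac12}$, which on the uniform refinement of level $k$ is $\eqsim (4^{-k})^{\frac12}=2^{-k}\eqsim (\#\tria^{(k)})^{-\frac12}$ — again producing rate $\frac12$ and the dependence on the length of $\gamma$ through $\|\lappie\|_{L_2(\gamma)}$ appearing only via $\sum_z \int_\gamma|\lappie|^2\phi_z\,ds \eqsim \|\lappie\|_{L_2(\gamma)}^2$. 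The delicate point — and the reason the statement is attributed to \cite{45.47} rather than being a one-line consequence — is that $\gamma$ is merely a Lipschitz curve with no assumed alignment with $\tria$, so a patch $\omega_{\tria,z}$ can meet $\gamma$ in a set of $1$-dimensional measure far larger than $\diam(\omega_z)$; controlling $\int_\gamma \phi_{\tria,z}\,ds$ and hence the finite-overlap count of boundary-touching patches requires a uniform bound on $\meas(\gamma \cap \omega_{\tria,z})/\diam(\omega_{\tria,z})$, which follows from the Lipschitz character of $\gamma$ together with shape regularity but must be invoked carefully; this is where I would lean on the cited results, quoting \cite[Theorems 7.3 and 7.4]{45.47} for the quantitative form. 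With these geometric facts in hand, the conclusion — membership in $\cB^{\frac12}_\Omega$, $\cB^{\frac12}_\gamma$ with the asserted norm bounds depending only on $\tria_\bot$ and $|\gamma|$ — is immediate.
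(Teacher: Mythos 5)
Your treatment of the domain term is correct and is essentially the standard argument: the partition of unity and $0\le\phi_{\tria,z}\le 1$ give $\cD_\Omega(\ff,\tria)^2 \le \max_z|\omega_{\tria,z}|\,\|\ff\|_{L_2(\Omega)}^2$, and on uniform refinements $\max_z|\omega_{\tria,z}|\eqsim(\#\tria)^{-1}$, so $\ff\in\cB^{\frac12}_\Omega$ follows with a constant depending only on $\tria_\bot$. Note, however, that the paper itself gives no proof of this theorem (it is imported verbatim from \cite{45.47}), so your plan to ``lean on'' \cite[Thms.~7.3--7.4]{45.47} for the boundary term is circular: those theorems \emph{are} the statement to be proved.

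The genuine gap is in the $\gamma$-term, and it is quantitative: your own bound reads $\cD_\gamma(\lappie,\tria)^2 \le \max_z|\omega_{\tria,z}|^{\frac12}\,\|\lappie\|_{L_2(\gamma)}^2$, and on the uniform level-$k$ mesh $\max_z|\omega_{\tria,z}|^{\frac12}\eqsim 2^{-k}\eqsim(\#\tria^{(k)})^{-\frac12}$ bounds the \emph{square} of $\cD_\gamma$; taking the square root you only get $\cD_\gamma(\lappie,\tria^{(k)})\lesssim(\#\tria^{(k)})^{-\frac14}\|\lappie\|_{L_2(\gamma)}$, i.e.\ membership in $\cB^{\frac14}_\gamma$, not $\cB^{\frac12}_\gamma$. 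Uniform refinement can never do better here: with mesh size $h$ one has $\cD_\gamma^2\lesssim h\|\lappie\|_{L_2(\gamma)}^2$ at a cost of $\eqsim h^{-2}$ cells. The rate $\frac12$ requires meshes refined \emph{only towards $\gamma$}: since $\gamma$ is Lipschitz with finite length, at most $\lesssim \mathrm{length}(\gamma)\,2^{j}$ patches of diameter $\eqsim 2^{-j}$ meet $\gamma$, so repeatedly marking exactly the boundary-touching vertices for $k$ rounds yields, via Theorem~\ref{bdd} (which also absorbs the conformity completion), a $\tria\in\bbT$ with $\#\tria-\#\tria_\bot\lesssim \mathrm{length}(\gamma)\,2^{k}$ while every patch meeting $\gamma$ has $|\omega_{\tria,z}|^{\frac12}\lesssim 2^{-k}$; since only such patches contribute to $\cD_\gamma$, this gives $\cD_\gamma(\lappie,\tria)\lesssim 2^{-k/2}\|\lappie\|_{L_2(\gamma)}\eqsim(\#\tria-\#\tria_\bot)^{-\frac12}\|\lappie\|_{L_2(\gamma)}$. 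This counting step is where the dependence on the length of $\gamma$ enters — not, as you suggest, in controlling $\int_\gamma\phi_{\tria,z}\,ds$, which your crude bound already handles. The discrepancy is also visible in Sect.~\ref{Sgeneralizations}: in $d$ dimensions the claimed rates are $\frac1d$ (uniform suffices) versus $\frac{1}{2(d-1)}$ for the boundary datum, whereas uniform refinement would only give $\frac{1}{2d}$ for the latter.
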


The next lemma will be the key to bound the minimal number of nodes needed to satisfy the bulk chasing criterion, as it  is realized by the routine $\Mark$.
It shows that when $\tria^*$ is a sufficiently deep refinement of $\tria$ such that its total error is less than or equal to a certain multiple of the total error on $\tria$, then 
the set of vertices of the triangles that were refined when going from $\tria$ to $\tria^*$ satisfies the bulk chasing criterion.

\begin{lemma}[bulk chasing property] \label{boundonmarkedcells}
Setting
$$
\theta_*:={\textstyle \frac{\Lower}{\sqrt{1+\Upper^2}}},
$$
for $\theta \in (0,\theta_*)$ and any $\bbT \ni \tria^* \succeq \tria$ with 
\begin{equation} \label{5}
\err(\ff,\lappie,\tria^*)^2 \leq \big[1-{\textstyle \frac{\theta^2}{\theta_*^2}}\big] \err(\ff,\lappie,\tria)^2,
\end{equation}
it holds that
$$
\cE(\ff,\lappie,\tria,\nodes(\tria\setminus \tria^*)) \geq \theta \cE(\ff,\lappie,\tria).
$$
\end{lemma}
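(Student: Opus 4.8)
The statement is exactly the ``discrete optimality'' lemma underlying the rate-optimality proof, so I would prove it by the standard device: turn the assumed drop of the \emph{total} error into a lower bound for the part of the estimator $\cE(\ff,\lappie,\tria)$ that sits on the refined region $\cM:=\nodes(\tria\setminus\tria^*)$. Concretely, the plan is (i) to bound the drop $\err(\ff,\lappie,\tria)^2-\err(\ff,\lappie,\tria^*)^2$ from \emph{above} by $(\Upper^2+1)\,\cE(\ff,\lappie,\tria,\cM)^2$ (a one-step ``discrete reliability'' estimate), (ii) to bound it from \emph{below} by $\tfrac{\theta^2}{\theta_*^2}\err(\ff,\lappie,\tria)^2$ using the hypothesis \eqref{5}, and (iii) to pass from $\err(\ff,\lappie,\tria)^2$ to $\cE(\ff,\lappie,\tria)^2$ via $\err(\ff,\lappie,\tria)^2\geq\Lower^2\cE(\ff,\lappie,\tria)^2$ from Lemma~\ref{equiv}. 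Chaining (i)--(iii) gives $(\Upper^2+1)\cE(\ff,\lappie,\tria,\cM)^2\geq\tfrac{\theta^2}{\theta_*^2}\Lower^2\cE(\ff,\lappie,\tria)^2$, and since $\theta_*^2=\Lower^2/(1+\Upper^2)$ the factors $(\Upper^2+1)$ cancel, leaving $\cE(\ff,\lappie,\tria,\cM)^2\geq\theta^2\cE(\ff,\lappie,\tria)^2$, which is the claim.

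\textbf{Carrying out (i).} I would split $\err(\ff,\lappie,\tria)^2-\err(\ff,\lappie,\tria^*)^2$ into the $H^1$-seminorm part and the data-oscillation part. For the first, since $a$ is the $H^1_0(\Omega)$-seminorm inner product and $\UU_\tria\subset\UU_{\tria^*}$, Galerkin orthogonality gives the Pythagoras identity already invoked in the proof of Corollary~\ref{contraction}, so this part equals $|u^\lappie_{\tria^*}-u^\lappie_\tria|_{H^1(\Omega)}^2$, which by the localized upper bound Lemma~\ref{localupp} is $\leq\Upper^2\,\cE(\ff,\lappie,\tria,\cM)^2$. For the second part I would use the localized monotonicity $\cD(\ff,\lappie,\tria^*)^2\geq\cD(\ff,\lappie,\tria,\nodes(\tria)\setminus\cM)^2$: a vertex $z\in\nodes(\tria)\setminus\cM$ is, by definition of $\cM$, not a vertex of any refined element, hence its star is untouched, so $\omega_{\tria^*,z}=\omega_{\tria,z}$, $\phi_{\tria^*,z}=\phi_{\tria,z}$, $|\omega_{\tria^*,z}|=|\omega_{\tria,z}|$, whence $d_\Omega(\ff,\tria^*,z)=d_\Omega(\ff,\tria,z)$ and likewise $d_\gamma(\lappie,\tria^*,z)=d_\gamma(\lappie,\tria,z)$; summing over these vertices (all lying in $\nodes(\tria^*)$) yields the inequality. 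Therefore the oscillation part is $\leq\cD(\ff,\lappie,\tria,\cM)^2\leq\cE(\ff,\lappie,\tria,\cM)^2$, and adding the two parts gives $\err(\ff,\lappie,\tria)^2-\err(\ff,\lappie,\tria^*)^2\leq(\Upper^2+1)\cE(\ff,\lappie,\tria,\cM)^2$.

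\textbf{Main obstacle.} Everything except the localized oscillation monotonicity is bookkeeping with Lemma~\ref{localupp} and Lemma~\ref{equiv}. The one point that needs genuine care is precisely that step: one must argue that ``being a vertex of a refined triangle'' is \emph{exactly} the obstruction to the star $\omega_{\cdot,z}$, the hat function $\phi_{\cdot,z}$, and the quantity $|\omega_{\cdot,z}|$ being preserved under the refinement $\tria\to\tria^*$, using conformity of $\tria^*$ and the particular definition of $|\omega_{\tria,z}|$ from Remark~\ref{r:area}. (The restriction $\theta\in(0,\theta_*)$ plays no role in the estimates themselves; it merely guarantees that the factor $1-\theta^2/\theta_*^2$ in \eqref{5} is positive so that the hypothesis is not vacuous, the degenerate case $\err(\ff,\lappie,\tria)=0$ — equivalently $\cE(\ff,\lappie,\tria)=0$ by Lemma~\ref{equiv} — being trivial.)
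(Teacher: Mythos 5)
Your proposal is correct and follows essentially the same route as the paper's proof: you bound the drop $\err(\ff,\lappie,\tria)^2-\err(\ff,\lappie,\tria^*)^2$ above by $|u^{\lappie}_{\tria^*}-u^{\lappie}_\tria|_{H^1(\Omega)}^2+\cD(\ff,\lappie,\tria,\nodes(\tria\setminus\tria^*))^2\leq (1+\Upper^2)\,\cE(\ff,\lappie,\tria,\nodes(\tria\setminus\tria^*))^2$ using Galerkin orthogonality, Lemma~\ref{localupp}, and the observation that stars of vertices outside $\nodes(\tria\setminus\tria^*)$ are untouched, and below by ${\textstyle\frac{\theta^2}{\theta_*^2}}\err(\ff,\lappie,\tria)^2\geq\theta^2(1+\Upper^2)\,\cE(\ff,\lappie,\tria)^2$ via \eqref{5} and Lemma~\ref{equiv}, which is exactly the chain of inequalities in the paper. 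Nothing further is needed.
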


\begin{proof} Noting that each $T \in \tria$ that contains a $z \in \nodes(\tria) \setminus \nodes(\tria \setminus \tria^*)$ is in $\tria^*$, one infers that
$$
\cD(\ff,\lappie,\tria)^2 \leq  \cD(\ff,\lappie,\tria,\nodes(\tria\setminus \tria^*))^2+\cD(\ff,\lappie,\tria^*)^2.
$$
Now from lemmas \ref{localupp} and \ref{equiv}, and the assumption on $\tria^*$, we obtain that
\begin{align*}
\theta^2 (1+\Upper^2)&\cE(\ff,\lappie,\tria)^2 \leq {\textstyle \frac{\theta^2}{\theta^2_*}} \err(\ff,\lappie,\tria)^2
\\
&\leq \err(\ff,\lappie,\tria)^2-\err(\ff,\lappie,\tria^*)^2\\
&\leq |u^{\lappie}_{\tria^*}-u^{\lappie}_\tria|_{H^1(\Omega)}^2 +\cD(\ff,\lappie,\tria,\nodes(\tria\setminus \tria^*))^2\\
&\leq (1+\Upper^2) \cE(\ff,\lappie,\tria,\nodes(\tria\setminus \tria^*))^2
\end{align*}
being the statement of the lemma.
\end{proof}

\begin{corollary} \label{c:marked}For $\theta \in (0,\theta_*)$, $u^{\lappie} \in \cA^s$ for some $s \in (0,\frac{1}{2}]$, 
$\tria \in \bbT$, and $\cM=\Mark(e(\ff,\lappie,\tria,z)_{z \in \nodes(\tria)},\theta)$, it holds that
\be \label{Mbound}
\# \cM \lesssim C_s(u^{\lappie},\ff,\lappie) \err(\ff,\lappie,\tria)^{-\frac{1}{s}},
\ee
where
\begin{equation}\label{e:Cs}
C_s(u^{\lappie},\ff,\lappie):=\big(|u^{\lappie}|_{\cA^s}^{\frac{1}{s}}+\|\ff\|_{L_2(\Omega)}^{\frac{1}{s}}+\|\lappie\|_{L_2(\gamma)}^{\frac{1}{s}}\big).
\end{equation}
\end{corollary}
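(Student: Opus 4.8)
### Proof proposal for Corollary~\ref{c:marked}

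The plan is to combine the bulk chasing property (Lemma~\ref{boundonmarkedcells}) with the approximation classes for $u^{\lappie}$, $\ff$, and $\lappie$, and then invoke the minimality of the output of $\Mark$. First I would recall that by definition $\cM=\Mark(e(\ff,\lappie,\tria,z)_{z \in \nodes(\tria)},\theta)$ is a \emph{smallest} set satisfying $\cE(\ff,\lappie,\tria,\cM)^2 \geq \theta^2 \cE(\ff,\lappie,\tria)^2$. So if I can exhibit \emph{any} set $\hat{\cM} \subset \nodes(\tria)$ with $\cE(\ff,\lappie,\tria,\hat{\cM}) \geq \theta\, \cE(\ff,\lappie,\tria)$ and $\# \hat{\cM} \lesssim C_s(u^{\lappie},\ff,\lappie)\, \err(\ff,\lappie,\tria)^{-\frac1s}$, then $\# \cM \leq \# \hat{\cM}$ gives the claim.

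The natural candidate is $\hat{\cM} := \nodes(\tria \setminus \tria^*)$ for a cleverly chosen refinement $\tria^* \succeq \tria$. By Lemma~\ref{boundonmarkedcells}, to get the lower bound $\cE(\ff,\lappie,\tria,\hat{\cM}) \geq \theta\, \cE(\ff,\lappie,\tria)$ it suffices that $\tria^*$ satisfies the total-error reduction \eqref{5}, i.e. $\err(\ff,\lappie,\tria^*)^2 \leq [1-\theta^2/\theta_*^2]\,\err(\ff,\lappie,\tria)^2$. To construct such a $\tria^*$ with a controlled number of elements, I would use the three approximation classes: pick $N_1, N_2, N_3$ and partitions $\tria_1, \tria_2, \tria_3 \in \bbT$ realizing (up to a constant) $|u^{\lappie}-u^{\lappie}_{\tria_1}|_{H^1(\Omega)} \lesssim |u^{\lappie}|_{\cA^s} N_1^{-s}$, $\cD_\Omega(\ff,\tria_2) \lesssim |\ff|_{\cB^s_\Omega} N_2^{-s}$ (using $\ff \in \cB^s_\Omega$, which by Theorem~\ref{rhsclasses} holds for all $s \leq \frac12$ with $|\ff|_{\cB^s_\Omega} \lesssim \|\ff\|_{L_2(\Omega)}$, and monotonicity of $\cD_\Omega$ under refinement), and $\cD_\gamma(\lappie,\tria_3) \lesssim |\lappie|_{\cB^s_\gamma} N_3^{-s} \lesssim \|\lappie\|_{L_2(\gamma)} N_3^{-s}$. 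Then set $\tria^* := \tria \oplus \tria_1 \oplus \tria_2 \oplus \tria_3$. The overlay bound $\# \tria \oplus \tria' \leq \# \tria + \# \tria' - \#\tria_\bot$ controls $\#\tria^* - \#\tria_\bot \lesssim N_1 + N_2 + N_3$, and since $\tria^* \succeq \tria_1$ etc., the Galerkin best-approximation property and monotonicity of $\cD_\Omega, \cD_\gamma$ give $\err(\ff,\lappie,\tria^*)^2 = |u^{\lappie}-u^{\lappie}_{\tria^*}|^2_{H^1(\Omega)} + \cD(\ff,\lappie,\tria^*)^2 \lesssim (|u^{\lappie}|^2_{\cA^s} + \|\ff\|^2_{L_2(\Omega)} + \|\lappie\|^2_{L_2(\gamma)})(N_1^{-2s} + N_2^{-2s} + N_3^{-2s})$.

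Choosing $N_1 \eqsim N_2 \eqsim N_3 \eqsim N$ with $N \eqsim \big(C_s(u^{\lappie},\ff,\lappie)\, \err(\ff,\lappie,\tria)^{-1/s}\big)^{1}$ — concretely the smallest integer making the right-hand side above $\leq [1-\theta^2/\theta_*^2]\,\err(\ff,\lappie,\tria)^2$, which is possible because $\theta < \theta_*$ so the bracket is strictly positive — yields \eqref{5}, hence via Lemma~\ref{boundonmarkedcells} the desired lower bound for $\cE(\ff,\lappie,\tria,\hat{\cM})$, and by Theorem~\ref{bdd} (applied to the refinement chain from $\tria$ to $\tria^*$) together with the overlay estimate, $\# \hat{\cM} \leq \#\tria^* - \#\tria \lesssim N \lesssim C_s(u^{\lappie},\ff,\lappie)\, \err(\ff,\lappie,\tria)^{-1/s}$. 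Minimality of $\cM$ then closes the argument.

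I expect the main technical obstacle to be the bookkeeping in the choice of $N$: one must verify that a single $N$, polynomial in $\err(\ff,\lappie,\tria)^{-1}$ with the stated exponent $1/s$ and with the constant $C_s$ entering exactly as $|u^{\lappie}|_{\cA^s}^{1/s} + \|\ff\|_{L_2(\Omega)}^{1/s} + \|\lappie\|_{L_2(\gamma)}^{1/s}$, simultaneously drives all three approximation errors below the required threshold — this is where the $\frac1s$-th powers in \eqref{e:Cs} come from, via solving $M\, N^{-2s} \leq c\, \err^2$ for $N$. A secondary point is making sure the total error $\err$ rather than just the estimator $\cE$ appears on the right of \eqref{Mbound}; this is handled by the equivalence $\cE \eqsim \err$ from Lemma~\ref{equiv}, used to pass between the bulk-chasing statement (phrased in $\cE$) and the bound in $\err$.
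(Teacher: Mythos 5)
Your proposal is correct and follows essentially the same route as the paper: build near-best triangulations for $u^{\lappie}$, $\ff$, $\lappie$ from the approximation classes (Theorem~\ref{rhsclasses}), overlay them with $\tria$ to obtain a $\tria^*\succeq\tria$ satisfying the total-error reduction \eqref{5}, invoke the bulk chasing property of Lemma~\ref{boundonmarkedcells} together with the minimality of $\Mark$, and count via the overlay estimate; your fixing of a common cardinality $N\eqsim C_s\,\err^{-1/s}$ is just the dual bookkeeping of the paper's fixing of the accuracy $\hat E$ and using the ``either $0$ or $\geq 1$'' trick to raise the exponent from $2$ to $\tfrac1s$. Only cosmetic quibbles: $\#\nodes(\tria\setminus\tria^*)\lesssim\#\tria^*-\#\tria$ rather than $\leq$, and Theorem~\ref{bdd} is not actually needed here (the overlay bound suffices).
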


\begin{proof} Since $u^{\lappie} \in \cA^s$, $\ff\in \cB_\Omega^{\frac{1}{2}}$, $\lappie\in \cB_\gamma^{\frac{1}{2}}$, there exist $\tria_u,\tria_\ff,\tria_\lappie \in \bbT$ such that
\be \label{7}
\max\Big(|u^{\lappie}-u^{\lappie}_{\tria_u}|_{H^1(\Omega)},\cD_\Omega(\ff,\tria_\ff),\cD_\gamma(\lappie,\tria_\lappie)\Big) \leq {\textstyle \sqrt{\frac{1}{3}\big[1- \frac{\theta^2}{\theta_*^2}\big] }}\,\err(\ff,\lappie,\tria)=:\hat{E},
\ee
and
$$
\#\tria_u-\#\tria_{\bot} \leq |u|_{\cA^s}^{\frac{1}{s}} \hat{E}^{-\frac{1}{s}},\,
\#\tria_\ff-\#\tria_{\bot} \leq |\ff|_{\cB_\Omega^{\frac{1}{2}}}^{\frac{1}{2}} \hat{E}^{-\frac{1}{2}},
\#\tria_\lappie-\#\tria_{\bot} \leq |\lappie|_{\cB_\gamma^{\frac{1}{2}}}^{\frac{1}{2}} \hat{E}^{-\frac{1}{2}}.
$$
Since the left hand sides of the last two inequalities are either $0$ or $\geq 1$, we also have
\be \label{8}
\#\tria_u-\#\tria_{\bot} \leq |u|_{\cA^s}^{\frac{1}{s}} \hat{E}^{-\frac{1}{s}},\,
\#\tria_\ff-\#\tria_{\bot} \leq |\ff|_{\cB_\Omega^{\frac{1}{2}}}^{\frac{1}{s}} \hat{E}^{-\frac{1}{s}},
\#\tria_\lappie-\#\tria_{\bot} \leq |\lappie|_{\cB_\gamma^{\frac{1}{2}}}^{\frac{1}{s}} \hat{E}^{-\frac{1}{s}}.
\ee
From \eqref{7} and the monotonicity of $\cD(\ff,\lappie,\tria)$ and $|u^{\lappie}-u^{\lappie}_\tria|_{H^1(\Omega)}$ as function of $\tria$, it follows that
$\tria^*:=\tria\oplus\tria_u\oplus\tria_\ff\oplus\tria_\lappie$ satisfies \eqref{5}. 
In view of the bulk chasing property given by Lemma~\ref{boundonmarkedcells}, and because $\cM$ is a set of minimal cardinality that realizes the bulk chasing criterion, we infer that
\begin{align*}
\#\cM \leq \#\nodes(\tria\setminus \tria^*) \lesssim \#( \tria\setminus \tria^*) \leq \#\tria^* -\# \tria
\\
\leq \#\tria_u-\#\tria_{\bot} +\#\tria_\ff-\#\tria_{\bot}+\#\tria_\lappie-\#\tria_{\bot}
\end{align*}
where the third inequality is a consequence of the fact that each $T \in \tria \setminus \tria^*$ has been bisected at least once.
Now from \eqref{8}, Theorem~\ref{rhsclasses}, and
\be \label{extra}
\hat{E}^{-\frac{1}{s}}= \Big({\textstyle \sqrt{\frac{1}{3}\big[1- \frac{\theta^2}{\theta_*^2}\big]}\Big)^{-\frac{1}{s}}}\err(\ff,\lappie,\tria)^{-\frac{1}{s}} \eqsim \err(\ff,\lappie,\tria)^{-\frac{1}{s}},
\ee
the proof is completed.
\footnote{Noting that $\Big({\textstyle \sqrt{\frac{1}{3}\big[1- \frac{\theta^2}{\theta_*^2}\big]}\Big)^{-\frac{1}{s}}} \rightarrow \infty$ if, and only if, $\theta \rightarrow \theta_*$ or $s\rightarrow 0$, we conclude that
the constant `hidden' in the $\lesssim$-symbol in \eqref{extra},  and thus in  \eqref{Mbound},  depends on the value of $\theta$ or $s$ when they tend to $\theta_*$ or $0$, respectively. Consequently, this holds true for all results that are going to derived from Corollary~\ref{c:marked}.}
 \end{proof}

The next result guarantees that the nested sequence $(\tria_k)_{k}$ produced by
this adaptive finite element method reduces the total error at the best possible rate.

\begin{theorem}[convergence with optimal rate] \label{rate}
Let $\theta \in (0,\theta_*)$, and $u^{\lappie} \in \cA^s$ for some $s \in (0,\frac{1}{2}]$.
Then with $\tria_k$ denoting the partition after $k$ iterations of the $\solve-\estimate-\Mark-\refine$ loop started with $\tria_0=\tria_\bot$, it holds that
$$
\# \tria_k -\#\tria_0 \lesssim C_s(u^{\lappie},\ff,\lappie)\err(\ff,\lappie,\tria_k)^{-\frac{1}{s}},
$$
where $C_s(u^{\lappie},\ff,\lappie)$ is given by \eqref{e:Cs}.
\end{theorem}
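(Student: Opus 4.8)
The plan is to follow the by-now standard blueprint for rate-optimality of adaptive finite element methods (as in \cite{249.86,35.936}), combining the three ingredients already assembled above: the contraction property (Corollary~\ref{contraction}), the bound on the number of marked vertices (Corollary~\ref{c:marked}), and the complexity estimate for newest vertex bisection (Theorem~\ref{bdd}). No new analytic input is needed: the genuine work --- in particular, reducing the data oscillation at rate $\tfrac12 \geq s$ without an inner loop --- has already been absorbed into Corollary~\ref{c:marked} via Theorem~\ref{rhsclasses}.

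First I would introduce the \emph{quasi-error} $E_k := \big(|u^{\lappie}-u^{\lappie}_{\tria_k}|_{H^1(\Omega)}^2 + \upsilon\,\cE(\ff,\lappie,\tria_k)^2\big)^{1/2}$, with $\upsilon$ and $\alpha$ the constants from Corollary~\ref{contraction} associated to the chosen $\theta$. Since in the loop $\tria_{k+1}=\refine(\tria_k,\cM_k)$ with $\cM_k=\Mark\big((e(\ff,\lappie,\tria_k,z))_{z\in\nodes(\tria_k)},\theta\big)$, so in particular $\tria_{k+1}\succeq\refine(\tria_k,\cM_k)$, Corollary~\ref{contraction} gives $E_{k+1}^2 \leq \alpha E_k^2$, hence $E_j^2 \geq \alpha^{-(k-j)} E_k^2$ for all $0 \leq j \leq k$. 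Next I would record the uniform equivalence $E_k \eqsim \err(\ff,\lappie,\tria_k)$: on the one hand $\err(\ff,\lappie,\tria_k)^2 = |u^{\lappie}-u^{\lappie}_{\tria_k}|_{H^1(\Omega)}^2 + \cD(\ff,\lappie,\tria_k)^2 \leq \max(1,\upsilon^{-1})\,E_k^2$ since $\cD \leq \cE$; on the other hand, by Lemma~\ref{localupp} $|u^{\lappie}-u^{\lappie}_{\tria_k}|_{H^1(\Omega)}^2 \leq \Upper^2 \cE(\ff,\lappie,\tria_k)^2$ and by Lemma~\ref{equiv} $\Lower\,\cE(\ff,\lappie,\tria_k) \leq \err(\ff,\lappie,\tria_k)$, so $E_k^2 \leq (\Upper^2+\upsilon)\cE(\ff,\lappie,\tria_k)^2 \leq \Lower^{-2}(\Upper^2+\upsilon)\,\err(\ff,\lappie,\tria_k)^2$.

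Then I would assemble the estimate. By Theorem~\ref{bdd}, $\#\tria_k-\#\tria_0 \lesssim \sum_{j=0}^{k-1}\#\cM_j$, and by Corollary~\ref{c:marked} each summand satisfies $\#\cM_j \lesssim C_s(u^{\lappie},\ff,\lappie)\,\err(\ff,\lappie,\tria_j)^{-1/s} \lesssim C_s(u^{\lappie},\ff,\lappie)\,E_j^{-1/s}$. Inserting $E_j^{-1/s} \leq \alpha^{(k-j)/(2s)}E_k^{-1/s}$ and summing the resulting geometric series,
\[
\#\tria_k-\#\tria_0 \;\lesssim\; C_s(u^{\lappie},\ff,\lappie)\,E_k^{-1/s}\sum_{j=0}^{k-1}\alpha^{(k-j)/(2s)} \;\leq\; C_s(u^{\lappie},\ff,\lappie)\,E_k^{-1/s}\,\frac{\alpha^{1/(2s)}}{1-\alpha^{1/(2s)}},
\]
which is finite because $\alpha<1$. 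Replacing $E_k^{-1/s}$ by $\err(\ff,\lappie,\tria_k)^{-1/s}$ through the equivalence of the previous paragraph then yields the claim.

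The only step requiring genuine (though short) verification is the uniform equivalence $E_k \eqsim \err(\ff,\lappie,\tria_k)$, which rests solely on $\cD \leq \cE$ together with Lemmas~\ref{localupp} and~\ref{equiv}; everything else is bookkeeping of a convergent geometric series. I do not expect any real obstacle here --- all the difficulty was concentrated in Corollary~\ref{c:marked} --- but one should keep in mind, as flagged in its footnote, that the constant hidden in the $\lesssim$-symbol blows up as $\theta \to \theta_*$ or $s\to 0$.
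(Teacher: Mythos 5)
Your proposal is correct and follows essentially the same route as the paper: Theorem~\ref{bdd} plus Corollary~\ref{c:marked} to bound $\#\tria_k-\#\tria_\bot$ by a sum of $\err(\ff,\lappie,\tria_j)^{-1/s}$, the contraction of the quasi-error from Corollary~\ref{contraction} to sum the geometric series, and the equivalence $\err \eqsim$ quasi-error via Lemmas~\ref{localupp} and~\ref{equiv}. The only (harmless) difference is that you contract all the way to $E_k$ and so avoid the paper's final step, which stops the geometric sum at $\tria_{k-1}$ and then invokes the monotonicity $\err(\ff,\lappie,\tria_k)\leq\err(\ff,\lappie,\tria_{k-1})$ via Lemma~\ref{equiv} and the second statement of Lemma~\ref{estimatorreduction}.
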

\begin{proof} With $\cM_i$ denoting the set of nodes that are marked in $\nodes(\tria_i)$, applications of Theorem~\ref{bdd} and Corollary~\ref{c:marked} yield
$$
\# \tria_k -\#\tria_{\bot}   \lesssim \sum_{i=0}^{k-1} \# \cM_i \lesssim C_s(u^{\lappie},\ff,\lappie) \sum_{i=0}^{k-1} \err(\ff,\lappie,\tria_i)^{-\frac{1}{s}}.
$$
Hence, the equivalence  between $\err$ and $\cE$ provided by Lemma~\ref{equiv} together with the contraction property from Corollary~\ref{contraction} imply
\begin{align*}
\# \tria_k -\#\tria_{\bot} \lesssim &
C_s(u^{\lappie},\ff,\lappie) \sum_{i=0}^{k-1} \Big(\sqrt{|u^{\lappie}-u^{\lappie}_{\tria_i}|_{H^1(\Omega)}^2+\upsilon \cE(\ff,\lappie,\tria_i)^2}\Big)^{-\frac{1}{s}} \\
& \eqsim C_s(u^{\lappie},\ff,\lappie) \Big(\sqrt{|u^{\lappie}-u^{\lappie}_{\tria_{k-1}}|_{H^1(\Omega)}^2+\upsilon \cE(\ff,\lappie,\tria_{k-1})^2}\Big)^{-\frac{1}{s}}.
\end{align*}
By invoking Lemma~\ref{equiv} again, as well as the second statement of Lemma~\ref{estimatorreduction}, we arrive at
$$
\# \tria_k -\#\tria_{\bot} \lesssim  C_s(u^{\lappie},\ff,\lappie) \err(\ff,\lappie,\tria_{k-1})^{-\frac{1}{s}}  \leq C_s(u^{\lappie},\ff,\lappie) \err(\ff,\lappie,\tria_k)^{-\frac{1}{s}}. \qedhere
$$
\end{proof}

\section{The adaptive finite element method as an inner solver in Uzawa} \label{Svaryingrhs}
We have seen that for $f \in L_2(\Omega)$, and \emph{fixed} $\chi \in L_2(\gamma)$, the adaptive finite element method for solving \eqref{ellip} converges with the best possible rate. That is, whenever $u^\chi \in \cA^s$ for some $s \in (0,\frac{1}{2}]$, the Galerkin approximations converge to $u^\chi$ with rate $s$.
Now we return to the \emph{sequence} of problems \eqref{ellip}, where $\chi$ runs over the set of all intermediate approximations $\lambda^{(j-1)}_{\sigma_i}$ of $\lambda$.
These elliptic problems have to be approximated inside the Uzawa iteration.
We aim at showing that whenever $u=u^\lambda \in \cA^s$, the sequence of all approximations that we generate inside the nested inexact preconditioned Uzawa iteration converge to $u$ with \emph{this} rate $s$. 

Therefore, it is needed to optimally  bound the number of cells selected by any call of $\Mark$ in terms of $|u|_{\cA^s}$ (and that of $\|f\|_{L_2(\Omega)}$ and $\|g\|_{H^1(\gamma)}$), instead of applying the obvious bound involving $|u^\chi|_{\cA^s}$.
Indeed with $\chi$ running over the $\lambda^{(j-1)}_{\sigma_i}$, we do not know whether these $u^\chi \in \cA^s$ 
(let alone whether $\sup_{\chi} |u^\chi|_{\cA^s} \lesssim |u|_{\cA^s}$).

In the following Lemma~\ref{lem1} we will manage to achieve this goal for calls of $\Mark$ (and thus of $\refine$, $\solve$ and $\estimate$) that are made as long as the (total) error in the current Galerkin approximation for $u^\chi$ is bounded from below by a positive constant multiple of
$|u-u^\chi|_{H^1(\Omega)} \eqsim \|\lambda -\chi\|_{H^{-\frac{1}{2}}(\gamma)}$, cf. \eqref{condition}.
Fortunately, when this condition is violated, the approximation for $u^\chi$ will be sufficiently accurate for its use inside the Uzawa iteration so that there is no need for another call of $\Mark$.
The bound on the number of cells selected by $\Mark$ from Lemma~\ref{lem1} will depend on $\|\chi\|_{L_2(\gamma)}$.
In Lemma~\ref{lem3} it will be shown that for $\chi$ running over all $\lambda^{(j-1)}_{\sigma_i}$, the norms $\|\chi\|_{L_2(\gamma)}$ will be uniformly bounded by a multiple of $\|f\|_{L_2(\Omega)}+\|g\|_{H^1(\gamma)}$.

\begin{lemma} \label{lem1}
Let $\theta \in (0,\theta_*)$, and $u \in \cA^s$ for some $s \in (0,\frac{1}{2}]$.
Then for $\chi \in L_2(\gamma)$ and
$\tria \in \bbT$ with
\be \label{condition}
\err(f,\chi,\tria) \gtrsim |u-u^\chi|_{H^1(\Omega)},
\ee 
for $\cM=\Mark(e(\ff,\chi,\tria,z)_{z \in \nodes(\tria)},\theta)$ it holds that
\be \label{result}
\# \cM \lesssim C_s(u,\ff,\chi) \err(\ff,\chi,\tria)^{-\frac{1}{s}}.
\footnote{Without the condition \eqref{condition}, $C_s(u,\ff,\chi)$ in \eqref{result} would have to be read as the undesirable factor $C_s(u^\chi,\ff,\chi)$, cf. Lemma~\ref{c:marked}.}
\ee
\end{lemma}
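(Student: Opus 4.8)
The plan is to follow the scheme of the proof of Corollary~\ref{c:marked}, the one --- but essential --- difference being that we can no longer afford to approximate $u^\chi$ itself, since we have no control on $|u^\chi|_{\cA^s}$; instead we approximate $u$, which is assumed to lie in $\cA^s$, and we absorb the discrepancy $|u-u^\chi|_{H^1(\Omega)}$ using hypothesis \eqref{condition}. Exactly as in Corollary~\ref{c:marked}, everything reduces to exhibiting a refinement $\bbT \ni \tria^*\succeq\tria$ for which the total error has been reduced by a fixed factor --- i.e.\ which satisfies \eqref{5} --- while $\#\tria^*-\#\tria\lesssim C_s(u,\ff,\chi)\,\err(\ff,\chi,\tria)^{-1/s}$. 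Once such a $\tria^*$ is available, the bulk chasing property of Lemma~\ref{boundonmarkedcells} forces $\nodes(\tria\setminus\tria^*)$ to satisfy the bulk chasing criterion, so that minimality of $\cM$ gives $\#\cM\le\#\nodes(\tria\setminus\tria^*)\lesssim\#(\tria\setminus\tria^*)\le\#\tria^*-\#\tria$, using bounded valence and that each triangle in $\tria\setminus\tria^*$ is bisected at least once.

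To build $\tria^*$, set $\hat E:=c_0\,\err(\ff,\chi,\tria)$ for a small constant $c_0>0$ depending only on $\theta,\theta_*$, to be fixed below. Since $u\in\cA^s$ and, by Theorem~\ref{rhsclasses}, $\ff\in\cB^{\frac12}_\Omega$ and $\chi\in\cB^{\frac12}_\gamma$ with $|\ff|_{\cB^{1/2}_\Omega}\lesssim\|\ff\|_{L_2(\Omega)}$ and $|\chi|_{\cB^{1/2}_\gamma}\lesssim\|\chi\|_{L_2(\gamma)}$, we may select $\tria_u,\tria_\ff,\tria_\chi\in\bbT$ with $|u-u_{\tria_u}|_{H^1(\Omega)}\le\hat E$, $\cD_\Omega(\ff,\tria_\ff)\le\hat E$, $\cD_\gamma(\chi,\tria_\chi)\le\hat E$, and with the numbers of cells added to $\tria_\bot$ bounded, after the usual argument exploiting that each such number is $0$ or $\ge 1$ (which raises the data exponents from $\frac12$ to $\frac1s$ since $s\le\frac12$), by multiples of $|u|_{\cA^s}^{1/s}\hat E^{-1/s}$, $\|\ff\|_{L_2(\Omega)}^{1/s}\hat E^{-1/s}$ and $\|\chi\|_{L_2(\gamma)}^{1/s}\hat E^{-1/s}$, respectively. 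Putting $\tria^*:=\tria\oplus\tria_u\oplus\tria_\ff\oplus\tria_\chi$ and using $\#(\tria'\oplus\tria'')\le\#\tria'+\#\tria''-\#\tria_\bot$ repeatedly, we obtain $\#\tria^*-\#\tria\lesssim C_s(u,\ff,\chi)\hat E^{-1/s}\eqsim C_s(u,\ff,\chi)\,\err(\ff,\chi,\tria)^{-1/s}$, as needed.

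It remains to verify \eqref{5} for this $\tria^*$. The key point is that, $u_{\tria^*}$ lying in $\UU_{\tria^*}$ and $u^\chi_{\tria^*}$ being the $a$-best approximation to $u^\chi$ from $\UU_{\tria^*}$, monotonicity of $\tria\mapsto|u-u_\tria|_{H^1(\Omega)}$ together with $\tria^*\succeq\tria_u$ gives
\[
|u^\chi-u^\chi_{\tria^*}|_{H^1(\Omega)}\le|u^\chi-u_{\tria^*}|_{H^1(\Omega)}\le|u^\chi-u|_{H^1(\Omega)}+|u-u_{\tria^*}|_{H^1(\Omega)}\le|u^\chi-u|_{H^1(\Omega)}+\hat E,
\]
while $\cD_\Omega(\ff,\tria^*)\le\cD_\Omega(\ff,\tria_\ff)\le\hat E$ and $\cD_\gamma(\chi,\tria^*)\le\cD_\gamma(\chi,\tria_\chi)\le\hat E$ by the monotonicity statement in Lemma~\ref{estimatorreduction}. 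Hence
\[
\err(\ff,\chi,\tria^*)^2=|u^\chi-u^\chi_{\tria^*}|_{H^1(\Omega)}^2+\cD_\Omega(\ff,\tria^*)^2+\cD_\gamma(\chi,\tria^*)^2\le\big(|u-u^\chi|_{H^1(\Omega)}+\hat E\big)^2+2\hat E^2.
\]
Writing \eqref{condition} as $|u-u^\chi|_{H^1(\Omega)}\le\bar c\,\err(\ff,\chi,\tria)$ (with $\bar c$ the reciprocal of the constant implicit in $\gtrsim$) and recalling $\hat E=c_0\,\err(\ff,\chi,\tria)$, the right-hand side equals $\big[(\bar c+c_0)^2+2c_0^2\big]\,\err(\ff,\chi,\tria)^2$; taking $c_0$ small enough this is $\le\big[1-\frac{\theta^2}{\theta_*^2}\big]\err(\ff,\chi,\tria)^2$, which is \eqref{5}. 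Lemma~\ref{boundonmarkedcells} and the comparison of $\cM$ with $\nodes(\tria\setminus\tria^*)$ then conclude the proof, with the $\lesssim$-constant in \eqref{result} depending on $\theta$ and on $\bar c$.

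The main obstacle --- and the reason for the hypothesis \eqref{condition} --- is precisely this last step: refining only for $u$ and for the data cannot drive $|u^\chi-u^\chi_{\tria^*}|_{H^1(\Omega)}$ below $\approx|u-u^\chi|_{H^1(\Omega)}$, so the fixed-factor reduction \eqref{5} is attainable at the claimed cost only when $\err(\ff,\chi,\tria)$ genuinely dominates $|u-u^\chi|_{H^1(\Omega)}$, which is exactly what \eqref{condition} asserts; correspondingly, $c_0>0$ can be chosen as above only when $\bar c<\sqrt{1-\theta^2/\theta_*^2}$, i.e.\ the constant in \eqref{condition} is above the threshold $(1-\theta^2/\theta_*^2)^{-1/2}$ --- the regime in which the lemma will be invoked --- and the constant in \eqref{result} degrades as $\bar c$ approaches that threshold and as $\theta\to\theta_*$. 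Everything else is routine bookkeeping: the ``$0$-or-$\ge1$'' trick, the passage from $\cB^{1/2}$-quasinorms to $L_2$-norms via Theorem~\ref{rhsclasses}, and the refinement-counting estimates $\#\nodes(\tria\setminus\tria^*)\lesssim\#(\tria\setminus\tria^*)\le\#\tria^*-\#\tria$.
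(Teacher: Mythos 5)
There is a genuine gap, and you have in fact pinpointed it yourself without resolving it: your direct construction only yields the fixed-factor reduction \eqref{5} when the implicit constant in \eqref{condition} is small, i.e.\ when $|u-u^\chi|_{H^1(\Omega)}\le \bar c\,\err(\ff,\chi,\tria)$ with $\bar c<\sqrt{1-\theta^2/\theta_*^2}$. The lemma as stated (and as used) makes no such smallness assumption: ``$\gtrsim$'' allows an arbitrary fixed constant, and the implied constant in \eqref{result} is allowed to depend on it. In the actual invocation (Lemma~\ref{lem2}, feeding into Theorem~\ref{final}) the constant $\bar c$ comes from the stopping criterion $\err\eqsim \Upper\cE>\eps$ combined with $\eps\gtrsim|u-u^\chi|_{H^1(\Omega)}$, where the latter constant involves the norm equivalence $|u-u^\chi|_{H^1(\Omega)}\eqsim\|\lambda-\chi\|_{H^{-1/2}(\gamma)}$ and the constants of Lemma~\ref{basic}; it is not at your disposal and can easily exceed your threshold. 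So your closing assertion that the lemma ``will be invoked'' only in the favourable regime is unsubstantiated, and with it the proof of the stated lemma collapses: refining for $u$ and the data alone can never push $\err(\ff,\chi,\tria^*)$ below $\approx|u-u^\chi|_{H^1(\Omega)}$, hence cannot reach \eqref{5} when $\bar c$ is of moderate size.

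The missing ingredient is an intermediate step that converts the weak bound $\err(\ff,\chi,\tria^*)\lesssim\err(\ff,\chi,\tria)$ (which your $\tria^*=\tria\oplus\tria_u\oplus\tria_\ff\oplus\tria_\chi$ does deliver, for any $\bar c$) into the strict reduction \eqref{5} without leaving the cardinality regime. This is done by running a \emph{fixed} number of $\estimate$--$\Mark$--$\refine$--$\solve$ cycles started from $\tria^*$: by Lemma~\ref{equiv} the total error is equivalent to the contracting quantity $\big(|u^\chi-u^\chi_{\tria^*}|_{H^1(\Omega)}^2+\upsilon\,\cE(\ff,\chi,\tria^*)^2\big)^{1/2}$, so by Corollary~\ref{contraction} finitely many cycles (their number depending only on $\bar c$, $\alpha$, $\theta$, $\theta_*$ and the equivalence constants) produce a $\breve{\tria}\in\bbT$ with $\#\breve{\tria}\lesssim\#\tria^*$ and $\err(\ff,\chi,\breve{\tria})^2\le\big[1-\frac{\theta^2}{\theta_*^2}\big]\err(\ff,\chi,\tria)^2$. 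Setting $\bar{\tria}:=\tria\oplus\breve{\tria}\succeq\tria$ and applying Lemma~\ref{boundonmarkedcells} together with the minimality of $\cM$ then gives $\#\cM\le\#\nodes(\tria\setminus\bar{\tria})\lesssim\#\breve{\tria}-\#\tria_\bot\lesssim C_s(u,\ff,\chi)\,\err(\ff,\chi,\tria)^{-1/s}$, which is \eqref{result} for an arbitrary constant in \eqref{condition}. Your bookkeeping (the $\oplus$ counting, the ``$0$-or-$\ge1$'' trick, the use of Theorem~\ref{rhsclasses}) is fine; it is this contraction-cycle detour that your argument must incorporate to prove the lemma in the generality in which it is stated and used.
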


\begin{proof}
Since $u \in \cA^s$, $\ff\in \cB_\Omega^{\frac{1}{2}}$, $\chi\in \cB_\gamma^{\frac{1}{2}}$, there exist $\tria_u,\tria_\ff,\tria_\chi \in \bbT$ such that
\be \label{12}
\max\Big(|u-u_{\tria_u}|_{H^1(\Omega)},\cD_\Omega(\ff,\tria_\ff),\cD_\gamma(\chi,\tria_\chi)\Big) \leq \err(\ff,\chi,\tria),
\ee
and
\be \label{13}
\begin{split}
\#\tria_u-\#\tria_{\bot} &\leq |u|_{\cA^s}^{\frac{1}{s}} \err(\ff,\chi,\tria)^{-\frac{1}{s}},\\
\#\tria_\ff-\#\tria_{\bot} &\leq |\ff|_{\cB_\Omega^{\frac{1}{2}}}^{\frac{1}{s}} \err(\ff,\chi,\tria)^{-\frac{1}{2}},\\
\#\tria_\chi-\#\tria_{\bot} &\leq |\chi|_{\cB_\gamma^{\frac{1}{2}}}^{\frac{1}{s}} \err(\ff,\chi,\tria)^{-\frac{1}{2}}.
\end{split}
\ee

Let $\tria^*:=\tria_u\oplus\tria_\ff\oplus\tria_\chi$.
Then by
$$
|u^\chi-u^\chi_{\tria^*}|_{H^1(\Omega)} \leq |u^\chi-u_{\tria^*}|_{H^1(\Omega)} \leq |u-u_{\tria^*}|_{H^1(\Omega)}+|u-u^\chi|_{H^1(\Omega)},
$$
$|u-u^\chi|_{H^1(\Omega)} \lesssim \err(f,\chi,\tria)$ by assumption, and $\tria \mapsto \cD(\cdot,\cdot,\tria)$ being monotone non-increasing by Lemma~\ref{estimatorreduction},
we have $\err(\ff,\chi,\tria^*) \lesssim \err(\ff,\chi,\tria)$.

Lemma~\ref{equiv} guarantees that
$$
\sqrt{|u^\chi-u^\chi_{\tria^*}|_{H^1(\Omega)}^2+\nu \cE(f,\chi,\tria^*)^2} \eqsim \err(\ff,\chi,\tria^*).
$$
Hence, the contraction property (Corollary~\ref{contraction}) indicates that the left hand side reduces by a constant factor $\alpha<1$
by each application of the cycle $\estimate-\Mark-\refine-\solve$.
Therefore by applying a fixed, sufficiently large number of those cycles shows
 that there exists a $\breve{\tria} \in \bbT$ with $\# \breve{\tria} \lesssim \# \tria^*$ and
$$
\err(\ff,\chi,\breve{\tria})^2 \leq  \big[1-{\textstyle \frac{\theta^2}{\theta_*^2}}\big] \err(\ff,\chi,\tria)^2.
$$

For $\bar{\tria}:=\tria \oplus \breve{\tria}$, we have $\bar{\tria} \succeq \tria$ and
$\err(\ff,\chi,\bar{\tria})^2 \leq \err(\ff,\chi,\breve{\tria})^2 \leq \big[1-{\textstyle \frac{\theta^2}{\theta_*^2}}\big] \err(\ff,\chi,\tria)^2$,
so that from the bulk chasing property given by Lemma~\ref{boundonmarkedcells} combined with the minimal cardinality property of the set $\cM$,  it follows that
\begin{align*}
\#\cM & \leq \#\nodes(\tria\setminus \bar{\tria}) \lesssim \#( \tria\setminus \bar{\tria}) \leq \#\bar{\tria} -\# \tria \leq \#\breve{\tria} -\# \tria_{\bot} \\
& \lesssim \#\tria_u-\#\tria_{\bot}+ \#\tria_\ff-\#\tria_{\bot}+\#\tria_\chi-\#\tria_{\bot} \leq C_s(u,\ff,\chi) \err(\ff,\chi,\tria)^{-\frac{1}{s}},
\end{align*}
by \eqref{13}, and Theorem~\ref{rhsclasses}.
\end{proof}

Instead of adaptively solving  the elliptic problems \eqref{ellip} for $\chi=\lambda^{(j-1)}_{\sigma_i}$ for each $i$ and $j$ starting from $\tria_{\bot}$, we will use the final partition
produced for the approximation of $u^{\lambda^{(j)}_{\sigma_i}}$ as the initial partition for the approximation for
$u^{\lambda^{(j+1)}_{\sigma_i}}$ when $j<K$, and for $u^{\lambda^{(0)}_{\sigma_{i+1}}}$ otherwise.

We consider the following $\solve-\estimate-\Mark-\refine$  iteration, that starts from {\em some} given initial triangulation $\tria_0 \in \bbT$, thus \emph{not} necessarily equal to $\tria_{\bot}$, and that is completed by a stopping criterion. \medskip

\begin{algorithm}\label{a:AFEM}{\rm
\begin{algotab}
\> \\
\> $[\tria_k,\,$\=$u_{\tria_{k}}^\chi]=\afem(\tria_0,f,\chi,\eps)$: \\
\>\> $u_{\tria_{0}}^\chi= \solve(\tria_0,f,\chi)$\\
\>\> $(e(f,\chi,\tria_0,z))_{z \in \nodes(\tria_0)}=\estimate(u_{\tria_0}^\chi,f,\chi)$\\
\>\> $k=0$ \\
\>\> \texttt{while} \= $\Upper \cE(f,\chi,\tria_{k}) >\eps$ \texttt{do}\\
\>\>\> $\cM_{k}=\Mark((e(f,\chi,\tria_{k},z))_{z \in \nodes(\tria_{k})},\theta)$\\
\>\>\> $\tria_{k+1}=\refine(\tria_{k},\cM_k)$\\
\>\>\> $u_{\tria_{k+1}}^\chi= \solve(\tria_{k+1},f,\chi)$\\
\>\>\> $(e(f,\chi,\tria_{k+1},z))_{z \in \nodes(\tria_{k+1})}=\estimate(u_{\tria_{k+1}}^\chi,f,\chi)$\\
\>\>\> $k \leftarrow k+1$\\
\>\> \texttt{enddo}
\end{algotab}}
\end{algorithm}

In the following lemma, essentially it is shown that the approximations produced by $\afem$ converge to $u^\chi$ with a rate that is the best possible for approximating $u$ as long as the tolerance $\eps \gtrsim |u-u^\chi|_{H^1(\Omega)}$.

\begin{lemma} \label{lem2}
Let $\theta \in (0,\theta_*)$, $u \in \cA^s$ for some $s \in (0,\frac{1}{2}]$, $\chi \in L_2(\gamma)$,
$\tria_0 \in \bbT$, and $\eps>0$ with
$$
\eps \gtrsim |u-u^\chi|_{H^1(\Omega)}.
$$
Let
$
\tria_0 \prec \cdots \prec \tria_m \subset \bbT
$
denote the sequence of triangulations that is produced by the call $\afem(\tria_0,f,\chi,\eps)$, and for
 $0 \leq k \leq m-1$, let $\cM_k \subset \nodes(\tria_k)$ denote the sets of nodes that were marked.
Then
$$
\sum_{k=0}^{m-1} \# \cM_k \lesssim C_s(u,f,\chi) \eps^{-1/s},
$$
and $|u^\chi-u^\chi_{\tria_m}|_{H^1(\Omega)} \leq \eps$,
where $C_s(u,f,\chi)$ is given by \eqref{e:Cs}.
\end{lemma}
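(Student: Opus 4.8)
The plan is to combine the contraction property of the adaptive loop (Corollary~\ref{contraction}) with the bound on marked nodes from Lemma~\ref{lem1}, following the pattern of the proof of Theorem~\ref{rate}, but now being careful that every intermediate triangulation $\tria_k$ in the call $\afem(\tria_0,f,\chi,\eps)$ satisfies the hypothesis \eqref{condition} of Lemma~\ref{lem1} as long as the loop has not yet terminated. First I would observe that by the stopping criterion, for every $k<m$ we have $\Upper\,\cE(f,\chi,\tria_k)>\eps$, and hence, by Lemma~\ref{equiv}, $\err(f,\chi,\tria_k)\gtrsim \eps \gtrsim |u-u^\chi|_{H^1(\Omega)}$; this is precisely \eqref{condition}, so Lemma~\ref{lem1} applies to each such $\tria_k$ and yields $\#\cM_k \lesssim C_s(u,f,\chi)\,\err(f,\chi,\tria_k)^{-1/s}$.

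Next I would sum these bounds over $k=0,\dots,m-1$ and use the contraction estimate. By Corollary~\ref{contraction}, the quantity $|u^\chi-u^\chi_{\tria_k}|_{H^1(\Omega)}^2+\upsilon\,\cE(f,\chi,\tria_k)^2$ contracts by a fixed factor $\alpha<1$ at each step, so by Lemma~\ref{equiv} the sequence $\err(f,\chi,\tria_k)$ decays geometrically. Consequently $\sum_{k=0}^{m-1}\err(f,\chi,\tria_k)^{-1/s}\eqsim \err(f,\chi,\tria_{m-1})^{-1/s}$ (a geometric sum dominated by its largest, i.e.\ last, term). Since the loop did not stop at step $m-1$, we have $\err(f,\chi,\tria_{m-1})\gtrsim\eps$, whence $\err(f,\chi,\tria_{m-1})^{-1/s}\lesssim \eps^{-1/s}$, and therefore $\sum_{k=0}^{m-1}\#\cM_k\lesssim C_s(u,f,\chi)\,\eps^{-1/s}$, which is the first claim. (If $m=0$ the sum is empty and there is nothing to prove; also note Theorem~\ref{bdd} is not strictly needed here since the statement is about $\sum\#\cM_k$ rather than $\#\tria_m$.)

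For the second claim, $|u^\chi-u^\chi_{\tria_m}|_{H^1(\Omega)}\le\eps$, I would simply invoke the localized/global upper bound: by Lemma~\ref{localupp}, $|u^\chi-u^\chi_{\tria_m}|_{H^1(\Omega)}\le \Upper\,\cE(f,\chi,\tria_m)$, and since the loop terminated at index $m$ the stopping criterion gives $\Upper\,\cE(f,\chi,\tria_m)\le\eps$. This is immediate and requires no further work.

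The main obstacle I anticipate is the bookkeeping around the hypothesis of Lemma~\ref{lem1}: one must verify that \eqref{condition} holds uniformly for all $\tria_k$ with $k<m$, not merely for $\tria_0$, and in particular that the implied constant in $\err(f,\chi,\tria_k)\gtrsim|u-u^\chi|_{H^1(\Omega)}$ can be taken independent of $k$. This follows because the \emph{only} property used is the lower bound $\err(f,\chi,\tria_k)\gtrsim\eps$ coming from the \texttt{while}-condition together with Lemma~\ref{equiv}, combined with the standing assumption $\eps\gtrsim|u-u^\chi|_{H^1(\Omega)}$; once this is spelled out the rest is the routine geometric-sum argument already used in Theorem~\ref{rate}. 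A minor additional point to address is monotonicity: the decay of $\err(f,\chi,\tria_k)$ uses that $\cD(f,\chi,\tria_k)$ is non-increasing (second statement of Lemma~\ref{estimatorreduction}) so that the contraction of the combined quantity indeed transfers to $\err$ itself via Lemma~\ref{equiv}.
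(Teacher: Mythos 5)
Your proposal is correct and follows essentially the same route as the paper's proof: use the while-condition together with Lemma~\ref{equiv} to verify \eqref{condition} for every $\tria_k$ with $k<m$, apply Lemma~\ref{lem1} to bound each $\#\cM_k$, sum via the contraction property (Corollary~\ref{contraction}) as in Theorem~\ref{rate} so the sum is dominated by $\err(\ff,\chi,\tria_{m-1})^{-1/s}\lesssim\eps^{-1/s}$, and obtain the final accuracy from Lemma~\ref{localupp} and the stopping criterion at step $m$.
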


\begin{proof} The last statement is valid by Lemma~\ref{localupp} because the algorithm terminates as a consequence of Corollary~\ref{contraction}.

For $0 \leq k < m$, $\err(f,\chi,\tria_k) \eqsim \Upper \cE(f,\chi,\tria_k) >\eps\gtrsim |u-u^\chi|_{H^1(\Omega)}$,
where the strict inequality holds for otherwise the algorithm would have stopped at iteration $k$.
By Lemma~\ref{lem1}, we deduce that  $\# \cM_k \lesssim C_s(u,\ff,\chi) \err(\ff,\chi,\tria_k)^{-\frac{1}{s}}$.
As in the proof of Theorem~\ref{rate}, from Lemma~\ref{equiv} and Corollary~\ref{contraction} we infer that
$$
\sum_{k=0}^{m-1} \# \cM_k \lesssim C_s(u,\ff,\chi) \err(\ff,\chi,\tria_{m-1})^{-\frac{1}{s}} \leq C_s(u,\ff,\chi) \Upper^{\frac{1}{s}} \eps^{-\frac{1}{s}}.\qedhere
$$
\end{proof}

To use the results that were derived in the abstract setting discussed in Sect.~\ref{saddle}, recall that in our fictitious domain setting we have
 $\UU=H^1_0(\Omega)$, $\LL=H^{-\frac{1}{2}}(\gamma)$, and
$\{0\}=\LL_{\sigma_0} \subset \LL_{\sigma_1} \subset \cdots \subset \LL$ 
is the sequence of spaces of piecewise constant functions w.r.t. to uniform dyadically refined partitions $\sigma_1 \prec \sigma_2 \prec \cdots$ of $\gamma$. 
Since $\lambda \in L_2(\gamma)$ with $\|\lambda\|_{L_2(\gamma)} \lesssim \|f\|_{L_2(\Omega)}+\|g\|_{H^1(\gamma)}$, \eqref{10} reads as
$$
\|\lambda -\lambda_{\sigma_i}\|_{H^{-\frac{1}{2}}(\gamma)} \leq \Cst 2^{-i/2},
$$
i.e., $\zeta=\sqrt{2}$, and $\Cst=\Cst(f,g) \eqsim  \|f\|_{L_2(\Omega)}+\|g\|_{H^1(\gamma)}$.

We are now ready to use the routine $\afem$ as an inner solver in the nested inexact preconditioned Uzawa iteration. With constants $\beta$ and $K=K(M)$  as in Lemma~\ref{basic}, it reads as follows: \medskip

\begin{algorithm}
\label{a:fd}{\rm
\begin{algotab}
\> \\
\>\nipu$(f,g)$\\
\> $\lambda_{\sigma_0}^{(K)}:=0$, $\tria_{0,K}:=\tria_\bot$ \\
\> \texttt{for} \= $i=1,2,\ldots$ \texttt{do}\\
\>\> $\lambda_{\sigma_{i}}^{(0)}:=\lambda_{\sigma_{i-1}}^{(K)}$, $\tria_{i,0}:=\tria_{i-1,K}$\\
\>\> \texttt{for} \= $j=1$ \texttt{to} $K$ \texttt{do}\\
\>\>\> $[\tria_{i,j},u_{\tria_{i,j}}^{\lambda_{\sigma_i}^{(j-1)}}]:=\afem(\tria_{i,j-1},f,\lambda_{\sigma_i}^{(j-1)},\Cst \zeta^{-i})$\\
\>\>\> $\lambda_{\sigma_{i}}^{(j)}:=\lambda_{\sigma_{i}}^{(j-1)}+\beta M_{\sigma_i}^{-1} I_{\sigma_i}' (B u_{\tria_{i,j}}^{\lambda_{\sigma_i}^{(j-1)}}+g)$\\
\>\> \texttt{endfor}\\
\> \texttt{endfor}
\end{algotab}}
\end{algorithm}

In order to remove the dependence on $\chi=\lambda_{\sigma_i}^{(j-1)}$ of the upper bounds derived in Lemmas~\ref{lem1} and \ref{lem2}, we need uniform boundedness of the $\|\lambda_{\sigma_{i}}^{(j)}\|_{L_2(\gamma)}$:

\begin{lemma} \label{lem3} For the sequence $((\lambda_{\sigma_{i}}^{(j)})_{1 \leq j \leq K})_{i \geq 1}$ produced by the above algorithm it holds that $\|\lambda_{\sigma_{i}}^{(j)}\|_{L_2(\gamma)} \lesssim \Cst=\Cst(f,g)$.
\end{lemma}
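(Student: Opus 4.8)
The plan is to show that the piecewise-constant iterates $\lambda^{(j)}_{\sigma_i}$ stay bounded in $L_2(\gamma)$ by a uniform multiple of $\Cst(f,g)$, using the fact that $\LL_{\sigma_i}$ is a sequence of spaces of piecewise constants on \emph{uniform} dyadic partitions, so that on each fixed level $i$ the $L_2(\gamma)$-norm and the $H^{-1/2}(\gamma)$-norm are comparable up to a factor $h_{\sigma_i}^{-1/2} \eqsim 2^{i/2}$ (an inverse estimate for piecewise constants on a quasi-uniform partition). First I would write, for each $i\ge 1$ and $1 \le j \le K$,
\[
\|\lambda^{(j)}_{\sigma_i}\|_{L_2(\gamma)} \le \|\lambda^{(j)}_{\sigma_i}-\lambda_{\sigma_i}\|_{L_2(\gamma)} + \|\lambda_{\sigma_i}\|_{L_2(\gamma)},
\]
and bound the second term by $\|\lambda_{\sigma_i}\|_{L_2(\gamma)} \lesssim \|\lambda\|_{L_2(\gamma)} \lesssim \Cst$, using that the $L_2(\gamma)$-orthogonal projection onto piecewise constants is $L_2$-stable (or, if $\lambda_{\sigma_i}$ is the $S$-Galerkin projection rather than the $L_2$-projection, interpolating: the $S$-Galerkin projection is $H^{-1/2}$-stable by the Lemma, hence by duality/interpolation also $L_2$-stable on a quasi-uniform mesh — this is the one point needing a little care).

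Next I would handle the first term via the inverse inequality: since $\lambda^{(j)}_{\sigma_i}-\lambda_{\sigma_i} \in \LL_{\sigma_i}$ is piecewise constant on the uniform partition $\sigma_i$, which has mesh size $\eqsim 2^{-i}$,
\[
\|\lambda^{(j)}_{\sigma_i}-\lambda_{\sigma_i}\|_{L_2(\gamma)} \lesssim 2^{i/2}\,\|\lambda^{(j)}_{\sigma_i}-\lambda_{\sigma_i}\|_{H^{-\frac{1}{2}}(\gamma)}.
\]
Now Lemma~\ref{basic} applies: the inner solves $u^{(i,j-1)}=u^{\lambda^{(j-1)}_{\sigma_i}}_{\tria_{i,j}}$ satisfy the accuracy requirement \eqref{11} because $\afem$ was called with tolerance $\Cst\zeta^{-i}$ and Lemma~\ref{lem2} (whose hypothesis $\eps \gtrsim |u-u^\chi|_{H^1(\Omega)}$ one checks inductively, or which is harmless here since we only need the output accuracy guarantee $|u^\chi - u^\chi_{\tria_m}|_{H^1} \le \eps$, which holds unconditionally by Lemma~\ref{localupp}/Corollary~\ref{contraction}). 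Hence Lemma~\ref{basic} gives $\|\lambda_{\sigma_i}-\lambda^{(j)}_{\sigma_i}\|_{H^{-\frac{1}{2}}(\gamma)} \lesssim \Cst \zeta^{-i} = \Cst\, 2^{-i/2}$. Multiplying by the $2^{i/2}$ from the inverse estimate, the powers of $2$ cancel and one gets $\|\lambda^{(j)}_{\sigma_i}-\lambda_{\sigma_i}\|_{L_2(\gamma)} \lesssim \Cst$, uniformly in $i$ and $j$. Combining the two terms finishes the proof.

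\textbf{Main obstacle.} The delicate point is that the constant $\Cst$ in Lemma~\ref{basic} and the accuracy \eqref{11} are precisely tuned so that the $H^{-1/2}$-error of the iterate decays like $\zeta^{-i}=2^{-i/2}$, and this is exactly the decay rate that the inverse inequality (with constant $\eqsim 2^{i/2}$) consumes; the whole argument works only because $\zeta=\sqrt2$ matches the mesh-size exponent $\tfrac12$ of the \emph{uniform} refinements of $\gamma$ — this is the structural reason the authors insisted on uniform (not adaptive) partitions of $\gamma$. A secondary technical point is verifying that $\lambda_{\sigma_i}$ itself is bounded in $L_2(\gamma)$ uniformly in $i$: if $\lambda_{\sigma_i}$ denotes the $S$-energy Galerkin projection, one needs that this projection — which is $H^{-1/2}$-bounded by the Lemma in Section~\ref{saddle} — is also $L_2$-bounded on the uniform mesh; this again follows from an inverse estimate applied to $\lambda_{\sigma_i}-Q_{\sigma_i}\lambda$ (with $Q_{\sigma_i}$ the $L_2$-projection), using $\|\lambda_{\sigma_i}-Q_{\sigma_i}\lambda\|_{H^{-1/2}} \le \|\lambda_{\sigma_i}-\lambda\|_{H^{-1/2}} + \|\lambda - Q_{\sigma_i}\lambda\|_{H^{-1/2}} \lesssim \Cst 2^{-i/2}$ and multiplying by $2^{i/2}$, together with $L_2$-stability of $Q_{\sigma_i}$.
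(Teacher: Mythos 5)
Your proposal is correct and is essentially the paper's own argument: both hinge on the inverse inequality $\|\cdot\|_{L_2(\gamma)}\lesssim 2^{i/2}\|\cdot\|_{H^{-\frac{1}{2}}(\gamma)}$ on the uniform spaces $\LL_{\sigma_i}$, the $L_2(\gamma)$-orthogonal projector $Q_{\sigma_i}$, and the bounds $\|\lambda-\lambda_{\sigma_i}\|_{H^{-\frac{1}{2}}(\gamma)}\lesssim \Cst 2^{-i/2}$ from \eqref{10} and $\|\lambda_{\sigma_i}-\lambda^{(j)}_{\sigma_i}\|_{H^{-\frac{1}{2}}(\gamma)}\lesssim \Cst 2^{-i/2}$ from Lemma~\ref{basic}, with the exact cancellation $2^{i/2}\cdot 2^{-i/2}=1$. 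The only cosmetic difference is that you insert $\lambda_{\sigma_i}$ as an extra intermediate and then treat $\|\lambda_{\sigma_i}\|_{L_2(\gamma)}$ via $Q_{\sigma_i}\lambda$, whereas the paper applies the inverse inequality once, directly to $Q_{\sigma_i}\lambda-\lambda^{(j)}_{\sigma_i}\in\LL_{\sigma_i}$.
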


\begin{proof} With $Q_{\sigma_i}$ denoting the $L_2(\gamma)$-orthogonal projector onto $\LL_{\sigma_i}$, we estimate
\begin{align*}
\|\lambda_{\sigma_{i}}^{(j)}\|_{L_2(\gamma)} & \leq \|\lambda\|_{L_2(\gamma)}  +\|\lambda-\lambda_{\sigma_{i}}^{(j)}\|_{L_2(\gamma)}\\
& \leq \|\lambda\|_{L_2(\gamma)}+ \|\lambda-Q_{\sigma_i}\lambda\|_{L_2(\gamma)}+\|Q_{\sigma_i} \lambda-\lambda_{\sigma_{i}}^{(j)}\|_{L_2(\gamma)}\\
&\leq 2 \|\lambda\|_{L_2(\gamma)}+\|Q_{\sigma_i} \lambda-\lambda_{\sigma_{i}}^{(j)}\|_{L_2(\gamma)}\\
&\lesssim 2 \|\lambda\|_{L_2(\gamma)}+2^{i/2} \|Q_{\sigma_i} \lambda-\lambda_{\sigma_{i}}^{(j)}\|_{H^{-\frac{1}{2}}(\gamma)}
\end{align*}
by the application of the inverse inequality $\|\cdot\|_{L_2(\Omega)} \lesssim 2^{i/2} \|\cdot\|_{H^{-\frac{1}{2}}(\Omega)}$ on $\LL_{\sigma_i}$ (e.g., see \cite[Thm.~4.6]{56.5}).
The proof is completed by $\|\lambda\|_{L_2(\Omega)} \lesssim \Cst=\Cst(f,g)$ and
$\|Q_{\sigma_i} \lambda-\lambda_{\sigma_{i}}^{(j)}\|_{H^{-\frac{1}{2}}(\gamma)} \leq \|(I-Q_{\sigma_i}) \lambda\|_{H^{-\frac{1}{2}}(\gamma)} +\|\lambda-\lambda_{\sigma_{i}}^{(j)}\|_{H^{-\frac{1}{2}}(\gamma)} \lesssim \Cst 2^{-i/2}$, for the second term using Lemma~\ref{basic} together with \eqref{10}. 
\end{proof}

We are ready to prove that the sequence $((u_{\tria_{i,j}}^{\lambda_{\sigma_i}^{(j-1)}})_{1\leq j \leq K})_{i \geq 1}$ converges to $u$ with the best possible rate:
\begin{theorem} \label{final} Let $\theta \in (0,\theta_*)$, $u \in \cA^s$ for some $s \in (0,\frac{1}{2}]$ and assume that $K$ is sufficiently large constant as specified in Lemma~\ref{basic}.
Then for $i \geq 1$,
\be \label{14}
\frac{\max(\|\lambda-\lambda^{(j)}_{\sigma_i}\|_{H^{-\frac{1}{2}}(\gamma)},\|u-u_{\tria_{i,j}}^{\lambda_{\sigma_i}^{(j-1)}}\|_{H^1(\Omega)})}{\|f\|_{L_2(\Omega)}+\|g\|_{H^1(\gamma)}} \lesssim  2^{-i/2},\quad(1 \leq j \leq K),
\ee
and
\be \label{18}
\# \tria_{i,j} -\#\tria_\bot \lesssim 
\Big(\Big(\frac{|u|_{\cA^s}}{\|f\|_{L_2(\Omega)}+\|g\|_{H^1(\gamma)}}\Big)^{1/s}+2\Big)
\Big(\frac{ \|u-u_{\tria_{i,j}}^{\lambda_{\sigma_i}^{(j-1)}}\|_{H^1(\Omega)}}{\|f\|_{L_2(\Omega)}+\|g\|_{H^1(\gamma)}}\Big)^{-1/s}.
\ee
\end{theorem}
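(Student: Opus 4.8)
The plan is to prove \eqref{14} first, and then derive the complexity bound \eqref{18} from it together with Lemma~\ref{lem2} and Theorem~\ref{bdd}. For \eqref{14}, I would start from Lemma~\ref{basic}, which — given that \eqref{10} holds with $\zeta=\sqrt2$ and $\Cst\eqsim\|f\|_{L_2(\Omega)}+\|g\|_{H^1(\gamma)}$, and given that each call $\afem(\tria_{i,j-1},f,\lambda_{\sigma_i}^{(j-1)},\Cst\zeta^{-i})$ produces (by Lemma~\ref{lem2}, last statement, or directly by Lemma~\ref{localupp} and the stopping criterion) an approximation $u^{(i,j-1)}=u_{\tria_{i,j}}^{\lambda_{\sigma_i}^{(j-1)}}$ satisfying $\|u^{\lambda_{\sigma_i}^{(j-1)}}-u^{(i,j-1)}\|_{H^1(\Omega)}\le\Cst\zeta^{-i}$, i.e. \eqref{11} — gives at once $\|\lambda_{\sigma_i}-\lambda_{\sigma_i}^{(j)}\|_{H^{-\frac12}(\gamma)}\lesssim\Cst 2^{-i/2}$ and $\|u-u^{(i,j-1)}\|_{H^1(\Omega)}\lesssim\Cst 2^{-i/2}$. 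Combining the first with \eqref{10} via the triangle inequality yields $\|\lambda-\lambda_{\sigma_i}^{(j)}\|_{H^{-\frac12}(\gamma)}\lesssim\Cst 2^{-i/2}$, and since $\Cst\eqsim\|f\|_{L_2(\Omega)}+\|g\|_{H^1(\gamma)}$ this is exactly \eqref{14}. One subtlety to flag here: the hypothesis of Lemma~\ref{lem2} requires $\eps\gtrsim|u-u^\chi|_{H^1(\Omega)}$ with $\chi=\lambda_{\sigma_i}^{(j-1)}$; this must be checked, using $|u-u^\chi|_{H^1(\Omega)}=\|A^{-1}B'(\lambda-\lambda_{\sigma_i}^{(j-1)})\|_{H^1(\Omega)}\lesssim\|\lambda-\lambda_{\sigma_i}^{(j-1)}\|_{H^{-\frac12}(\gamma)}\lesssim\Cst 2^{-i/2}=\eps$, where the last bound comes from the already-established first part of Lemma~\ref{basic} (the $\lesssim\Cst\zeta^{-i}$ estimate on $\|\lambda_{\sigma_i}-\lambda_{\sigma_i}^{(j-1)}\|$) plus \eqref{10}; this is an induction on $i$ (and on $j$ within each $i$), since the bound \eqref{11} needed to run Lemma~\ref{basic} at level $i$ relies on the $\afem$ output, which in turn needs $\eps\gtrsim|u-u^\chi|$ verified from earlier levels.

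For \eqref{18}, I would argue as follows. The partition $\tria_{i,j}$ is the final partition of the call $\afem(\tria_{i,j-1},f,\lambda_{\sigma_i}^{(j-1)},\Cst\zeta^{-i})$, and by the nesting bookkeeping in Algorithm~\ref{a:fd}, $\tria_{i,j}$ is obtained from $\tria_\bot$ by a chain of $\refine$ calls with marked-node sets $\cM$ collected over all previous $\afem$ runs. By Theorem~\ref{bdd}, $\#\tria_{i,j}-\#\tria_\bot\lesssim\sum\#\cM$, the sum running over all those marked sets. Lemma~\ref{lem2} bounds the contribution of each inner $\afem$ run at level $i'$ by $C_s(u,f,\lambda_{\sigma_{i'}}^{(j'-1)})(\Cst\zeta^{-i'})^{-1/s}$; here Lemma~\ref{lem3} gives $\|\lambda_{\sigma_{i'}}^{(j'-1)}\|_{L_2(\gamma)}\lesssim\Cst$, so that $C_s(u,f,\lambda_{\sigma_{i'}}^{(j'-1)})\lesssim|u|_{\cA^s}^{1/s}+\Cst^{1/s}$, a bound independent of $i',j'$. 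Summing the geometric-type series $\sum_{i'\le i}K\cdot(\Cst\zeta^{-i'})^{-1/s}\eqsim(\Cst\zeta^{-i})^{-1/s}=(\Cst 2^{-i/2})^{-1/s}$, I obtain $\#\tria_{i,j}-\#\tria_\bot\lesssim(|u|_{\cA^s}^{1/s}+\Cst^{1/s})(\Cst 2^{-i/2})^{-1/s}$. Dividing through by $\Cst^{1/s}$ inside and recalling $\Cst\eqsim\|f\|_{L_2(\Omega)}+\|g\|_{H^1(\gamma)}$, this is $\lesssim\big((|u|_{\cA^s}/(\|f\|_{L_2(\Omega)}+\|g\|_{H^1(\gamma)}))^{1/s}+2\big)(2^{-i/2})^{-1/s}$. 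Finally, by \eqref{14} we have $\|u-u_{\tria_{i,j}}^{\lambda_{\sigma_i}^{(j-1)}}\|_{H^1(\Omega)}\lesssim(\|f\|_{L_2(\Omega)}+\|g\|_{H^1(\gamma)})2^{-i/2}$, so $2^{-i/2}\gtrsim\|u-u_{\tria_{i,j}}^{\lambda_{\sigma_i}^{(j-1)}}\|_{H^1(\Omega)}/(\|f\|_{L_2(\Omega)}+\|g\|_{H^1(\gamma)})$ and hence $(2^{-i/2})^{-1/s}\lesssim\big(\|u-u_{\tria_{i,j}}^{\lambda_{\sigma_i}^{(j-1)}}\|_{H^1(\Omega)}/(\|f\|_{L_2(\Omega)}+\|g\|_{H^1(\gamma)})\big)^{-1/s}$, which substituted above gives exactly \eqref{18}.

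The main obstacle is the interlocking induction underlying \eqref{14}: Lemma~\ref{basic} (and hence everything downstream) is only applicable once \eqref{11} is known for the current $i$, but establishing \eqref{11} via Lemma~\ref{lem2} presupposes the smallness condition $\eps\gtrsim|u-u^{\lambda_{\sigma_i}^{(j-1)}}|_{H^1(\Omega)}$, which itself is a consequence of the $\|\lambda_{\sigma_i}-\lambda_{\sigma_i}^{(j-1)}\|$-control provided by Lemma~\ref{basic}. The resolution is that Lemma~\ref{basic}'s proof is itself an induction on $j$ (with the base case at $j=0$ coming from the previous level $i-1$), so one runs the two inductions simultaneously: at step $(i,j)$ one already has $\|\lambda_{\sigma_i}-\lambda_{\sigma_i}^{(j-1)}\|_{H^{-1/2}(\gamma)}\lesssim\Cst 2^{-i/2}$ and $\|\lambda-\lambda_{\sigma_i}\|_{H^{-1/2}(\gamma)}\le\Cst 2^{-i/2}$, hence $|u-u^{\lambda_{\sigma_i}^{(j-1)}}|_{H^1(\Omega)}\lesssim\Cst 2^{-i/2}=\eps$, which licenses Lemma~\ref{lem2}, which delivers \eqref{11} at step $(i,j)$, which feeds the inductive step of Lemma~\ref{basic} to advance to $(i,j+1)$. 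The other point requiring a little care is that the hidden constants in the geometric summation in \eqref{18} genuinely do not depend on $i,j$ — this is precisely what Lemma~\ref{lem3} buys us, by making $C_s(u,f,\lambda_{\sigma_i}^{(j-1)})$ uniformly bounded.
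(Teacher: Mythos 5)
Your proposal is correct and follows essentially the same route as the paper: \eqref{14} via Lemma~\ref{basic} with $\zeta=\sqrt{2}$ (the accuracy requirement \eqref{11} being supplied by the $\afem$ stopping criterion together with Lemma~\ref{localupp}), and \eqref{18} via Theorem~\ref{bdd}, Lemmas~\ref{lem2} and~\ref{lem3}, a geometric summation over the outer levels using that $K$ is a fixed constant, and finally \eqref{14} to convert $2^{-i/2}$ into the error quotient. The only (harmless) difference is the interlocking induction you set up: as you yourself note parenthetically, \eqref{11} holds unconditionally from the while-loop stopping criterion, so Lemma~\ref{basic} applies directly; the condition $\eps \gtrsim |u-u^{\chi}|_{H^1(\Omega)}$ is needed only for the cardinality bound of Lemma~\ref{lem2} entering \eqref{18}, where it follows from the already-established first part of the theorem, so no genuine circularity has to be resolved.
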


\begin{proof} The first statements follow from \eqref{10} and Lemma~\ref{basic} with $u^{(i,j)}=u_{\tria_{i,j}}^{\lambda_{\sigma_i}^{(j-1)}}$ and $\zeta= \sqrt{2}$.

With the number of triangulations created inside the $\afem(\tria_{i,j-1},f,\lambda_{\sigma_i}^{(j-1)},\Cst 2^{-i/2})$ denoted as $m_{i,j-1}$, let
$\cM^{(i,{j-1})}_{0},\ldots, \cM^{(i,{j-1})}_{m_{i,{j-1}}-1}$ denote the sequence of marked cells that is generated.
Since $\|\lambda_{\sigma_i}^{(j-1)}\|_{L_2(\gamma)} \lesssim \Cst$ by Lemma~\ref{lem3}, and $\|u-u^{\lambda_{\sigma_i}^{(j-1)}}\|_{H^1(\Omega)} \eqsim \|\lambda-\lambda_{\sigma_i}^{(j-1)}\|_{H^{-\frac{1}{2}(\gamma)}}\leq \Cst 2^{-i/2}$,
Lemma~\ref{lem2} shows that
$$
\sum_{k=0}^{m_{i,{j-1}}-1} \# \cM^{(i,{j-1})}_{k} \lesssim \big(|u|_{\cA^s}^{1/s}+\|f\|_{L_2(\Omega)}^{1/s} +L^{1/s}\big) \Cst^{-1/s} (2^{i/2})^{1/s}.
$$
Now an application of Theorem~\ref{bdd}, and the fact that, thanks to the optimal preconditioning, $K$ is a constant independent of $i$, 
show that 
\begin{equation} \label{15}
\begin{split}
\# \tria_{i,j}-\#\tria_\bot &\lesssim
\sum_{\breve{\jmath}=1}^j
\sum_{k=0}^{m^{(i,{\breve{\jmath}-1})}-1} \# \cM^{(i,{\breve{\jmath}-1})}_{k}
+
\sum_{\breve{\imath}=1}^{i-1}
\sum_{\breve{\jmath}=1}^K
\sum_{k=0}^{m^{(\breve{\imath},{\breve{\jmath}-1})}-1} \# \cM^{(\breve{\imath},{\breve{\jmath}-1})}_{k}\\
& \lesssim 
\Big(\frac{|u|_{\cA^s}^{1/s}+\|f\|_{L_2(\Omega)}^{1/s}}{L^{1/s}} +1\Big)
(2^{i/2})^{1/s} \\
& \lesssim 
\big((L^{-1} |u|_{\cA^s})^{1/s} +2\big) (L^{-1} \|u-u_\tria^{\lambda_{\sigma_i}^{(j-1)}}\|_{H^1(\Omega)})^{-1/s}. 
\end{split} 
\end{equation}
\end{proof}

\begin{remark}  Theorem~\ref{final} shows that the sequence $((u_{\tria_{i,j}}^{\lambda_{\sigma_i}^{(j-1)}})_{1\leq j \leq K})_{i \geq 1}$ converges to $u$ with the best possible rate, or equivalently, that $\# \tria_{i,j}$ is of the best possible order.
The latter even holds true if we read $\# \tria_{i,j}$ as the sum of the cardinality of $\tria_{i,j}$ and that of all preceding ones starting from $\tria_\bot$. This follows from \eqref{15}, $1 \leq j \leq K$, and $\sup_{i \geq 1} \max_{1 \leq j \leq K}
 m_{i,j-1} <\infty$. The latter is a consequence of the fact that the argument $\tria=\tria_{i,j-1}$ in the call $\afem(\tria_{i,j-1},f,\lambda_{\sigma_i}^{(j-1)},\Cst 2^{-i/2})$ is such that for
 $j>1$, $|u^{\lambda^{(j-2)}_{\sigma_i}}-u^{\lambda^{(j-2)}_{\sigma_i}}_\tria|_{H^1(\Omega)} \leq \Cst 2^{-i/2}$, and for $j=0$,
 $|u^{\lambda^{(K)}_{\sigma_{i-1}}}-u^{\lambda^{(K)}_{\sigma_{i-1}}}_\tria|_{H^1(\Omega)} \leq \Cst 2^{-(i-1)/2}$, and so, by the first inequality in \eqref{14},
in both cases $\inf_{U \in \UU_\tria} |u^{\lambda^{(j-1)}_{\sigma_i}}-U|_{H^1(\Omega)} \lesssim 2^{-i/2}$. As we have seen, this means that a uniformly bounded number of iterations of $\solve-\estimate-\Mark-\refine$ suffices to obtain a Galerkin approximation to $u^{\lambda^{(j-1)}_{\sigma_i}}$ that meets the tolerance $\Cst 2^{-i/2}$.

The statement proven in this remark is the first step in a proof of optimal computational complexity of a method in which 
the exact Galerkin solutions are replaced by inexact ones, following the analysis given in \cite{249.86}.
 \end{remark}
 
 \begin{remark}{(Cost of subdividing $\gamma$)}.
For the overall computational cost of the method, the costs of the repeated updates of the approximate Lagrange multiplier as well as their evaluations when used as right hand sides of the $\afem$ algorithm need to be accounted for.
 Both are proportional to the dimension of the spaces $\dim \Lambda_{\sigma_i} \eqsim 2^i$ or equivalently to the cardinality of the underlying mesh $\#\sigma_i$.
In view of \eqref{14}, we deduce that
$\dim \Lambda_{\sigma_i} \lesssim \Cst^2 \|u-u_{\tria_{i,j}}^{\lambda_{\sigma_i}^{(j-1)}}\|_{H^1(\Omega)}^{-2}$,
which is smaller than the estimate \eqref{18} derived for $\# \tria_{i,j}$ ($s \in (0,1/2 \rbrack$). The overall computational cost is therefore dominated by the approximation of $u$ in $\afem$.
\end{remark}

\section{Numerical Illustrations} \label{Numres}
\subsection{A posteriori error estimation} 
To assess the performances of Algorithm~\ref{a:fd}, we derive a-posteriori estimators for $|u-u_{\tria_{i,K}}^{\lambda_{\sigma_i}^{(K-1)}}|_{H^1(\Omega)}$ and $\|\lambda -\lambda_{\sigma_i}^{(K-1)}\|_{H^{-\frac{1}{2}}(\gamma)}$, and report on their values.
Notice that we expect $\lambda_{\sigma_i}^{(K)}$ to be more accurate than $\lambda_{\sigma_i}^{(K-1)}$ but we cannot get a computational estimate for the error in the former. 

We start with $|u-u_{\tria_{i,K}}^{\lambda_{\sigma_i}^{(K-1)}}|_{H^1(\Omega)}$.
From \eqref{second1} in Proposition~\ref{apost}, it follows that $|u-u^{\lambda_{\sigma_i}}|_{H^1(\Omega)} \eqsim \|g-u^{\lambda_{\sigma_i}}\|_{H^{\frac12}(\gamma)}$, with
the Aronszajn-Slobodeckij norm $\|w\|_{H^{\frac12}(\gamma)}^2:=
\|w\|_{L_2(\gamma)}^2+|w|_{H^{\frac12}(\gamma)}^2$ and
$|w|_{H^{\frac12}(\gamma)}^2:=\int_\gamma \int_\gamma \frac{|w(\xi)-w(\eta)|^2}{|\xi-\eta|^2}d\xi d\eta$.

To be able to compute, or accurately approximate, the error estimator in linear complexity, we localize the double integral.
As shown by B. Faermann in \cite{72.5}, using that $g-u^{\lambda_{\sigma_i}} \perp_{L_2(\gamma)} \LL_{\sigma_i}$ 
it holds that 
$\|g-u^{\lambda_{\sigma_i}}\|_{H^{\frac12}(\gamma)} \eqsim |g-u^{\lambda_{\sigma_i}}|_{\frac{1}{2},\sigma_i,{\rm loc}}$, where
$|w|_{\frac{1}{2},\sigma_i,{\rm loc}}^2:=\sum_{I \in \sigma_i} |w|^2_{H^{\frac12}(I\cup I_R)}$
and $I_R=I_R(I) \in \sigma_i$ is the interval next to $I$ in clockwise direction.

By triangle-inequalities and the trace theorem, we arrive at
\begin{equation} \label{een}
\begin{split}
|u-u_{\tria_{i,K}}^{\lambda_{\sigma_i}^{(K-1)}}|_{H^1(\Omega)} &\leq 
|u-u^{\lambda_{\sigma_i}}|_{H^1(\Omega)}+|u^{\lambda_{\sigma_i}}-u_{\tria_{i,K}}^{\lambda_{\sigma_i}^{(K-1)}}|_{H^1(\Omega)}\\
& \eqsim |g-u^{\lambda_{\sigma_i}}|_{\frac{1}{2},\sigma_i,{\rm loc}}+|u^{\lambda_{\sigma_i}}-u_{\tria_{i,K}}^{\lambda_{\sigma_i}^{(K-1)}}|_{H^1(\Omega)}\\
& \lesssim  |g-u_{\tria_{i,K}}^{\lambda_{\sigma_i}^{(K-1)}}|_{\frac{1}{2},\sigma_i,{\rm loc}}+|u^{\lambda_{\sigma_i}}-u_{\tria_{i,K}}^{\lambda_{\sigma_i}^{(K-1)}}|_{H^1(\Omega)}.
\end{split}
\end{equation}

Let $M_{\sigma_i}$ be a preconditioner as in \eqref{precond}, $\Phi_{\sigma_i}$ be a basis for $\LL_{\sigma_i}$,
and ${\bf r}:=\langle g-u_{\tria_{i,K}}^{\lambda_{\sigma_i}^{(K-1)}},\Phi_{\sigma_i} \rangle_{L_2(\gamma)}$.
From \eqref{first1}-\eqref{first2} in Proposition~\ref{apost} we have
\begin{equation} \label{twee}
\begin{split}
 |u^{\lambda_{\sigma_i}}-u_{\tria_{i,K}}^{\lambda_{\sigma_i}^{(K-1)}}|_{H^1(\Omega)} & \leq |u^{\lambda_{\sigma_i}}-u^{\lambda_{\sigma_i}^{(K-1)}}|_{H^1(\Omega)}+|u^{\lambda_{\sigma_i}^{(K-1)}}-u_{\tria_{i,K}}^{\lambda_{\sigma_i}^{(K-1)}}|_{H^1(\Omega)}\\
& \lesssim \sqrt{\langle {\bf M}^{-1}_{\sigma_i} {\bf r},{\bf r}\rangle}+|u^{\lambda_{\sigma_i}^{(K-1)}}-u_{\tria_{i,K}}^{\lambda_{\sigma_i}^{(K-1)}}|_{H^1(\Omega)}.
\end{split}
\end{equation}

Finally, an application of Lemma~\ref{localupp} shows that
\begin{equation} \label{drie}
|u^{\lambda_{\sigma_i}^{(K-1)}}-u_{\tria_{i,K}}^{\lambda_{\sigma_i}^{(K-1)}}| \leq \cE(u_{\tria_{i,K}}^{\lambda_{\sigma_i}^{(K-1)}},f,\lambda_{\sigma_i}^{(K-1)},\tria_{i,K}).
\end{equation}

Combining \eqref{een}, \eqref{twee}, \eqref{drie}, yields the computable upper bound
\begin{equation} \label{upper}
\begin{split}
|u-&u_{\tria_{i,K}}^{\lambda_{\sigma_i}^{(K-1)}}|_{H^1(\Omega)} \leq \\
&\lesssim \underbrace{|g-u_{\tria_{i,K}}^{\lambda_{\sigma_i}^{(K-1)}}|_{\frac{1}{2},\sigma_i,{\rm loc}}}_{E_\text{outer}:=}+\underbrace{\sqrt{\langle {\bf M}^{-1}_{\sigma_i} {\bf r},{\bf r}\rangle}}_{E_\text{Uzawa}:=}+\underbrace{\cE(u_{\tria_{i,K}}^{\lambda_{\sigma_i}^{(K-1)}},f,\lambda_{\sigma_i}^{(K-1)},\tria_{i,K})}_{E_\text{inner}:=}.
\end{split}
\end{equation}

Notice that when $E_\text{Uzawa}+E_\text{inner} \lesssim E_\text{outer}$, it
even holds that
$$
|u-u_{\tria_{i,K}}^{\lambda_{\sigma_i}^{(K-1)}}|_{H^1(\Omega)} \eqsim E_\text{outer}+E_\text{Uzawa}+E_\text{inner}.
$$
Indeed, this follows from the estimate 
\begin{equation}\label{estim:outer}
E_\text{outer} =|g-u_{\tria_{i,K}}^{\lambda_{\sigma_i}^{(K-1)}}|_{\frac{1}{2},\sigma_i,{\rm loc}} \le \sqrt{2}\,
 |g-u_{\tria_{i,K}}^{\lambda_{\sigma_i}^{(K-1)}}|_{H^{\frac{1}{2}}(\gamma)} \lesssim |u-u_{\tria_{i,K}}^{\lambda_{\sigma_i}^{(K-1)}}|_{H^1(\Omega)}
\end{equation}
by the trace theorem.
 
\begin{remark} Concerning the terminology, recall that in Lemma~\ref{equiv} we have seen that the inner Galerkin error $|u^{\lambda_{\sigma_i}^{(K-1)}}-u_{\tria_{i,K}}^{\lambda_{\sigma_i}^{(K-1)}}|_{H^1(\Omega)}$ is equivalent to
$E_\text{inner}$ up to the data oscillation term $\cD(f,\lambda_{\sigma_i}^{(K-1)},\tria_{i,K})$.
Furthermore, \eqref{first1}-\eqref{first2} in Proposition~\ref{apost} show that if $E_\text{inner}/E_\text{Uzawa}$ is sufficiently small, then $\|\lambda_{\sigma_i}-\lambda_{\sigma_i}^{(K-1)}\|_{H^{-\frac{1}{2}}(\gamma)} \eqsim |u^{\lambda_{\sigma_i}}-u^{\lambda_{\sigma_i}^{(K-1)}}|_{H^1(\Omega)}$ $\eqsim E_\text{Uzawa}$, which thus is properly called the Uzawa error.
Similarly,  if additionally $E_\text{Uzawa}/E_\text{outer}$ is sufficiently small, then $E_\text{outer} \eqsim |g-u^{\lambda_{\sigma_i}}|_{\frac{1}{2},\sigma_i,{\rm loc}} \eqsim |u-u^{\lambda_{\sigma_i}}|_{H^1(\Omega)} \eqsim \|\lambda-\lambda_{\sigma_i}\|_{H^{-\frac{1}{2}}(\gamma)}$ being the outer Galerkin error.
\end{remark}
\bigskip

Proceeding with the estimate of  $\|\lambda -\lambda_{\sigma_i}^{(K-1)}\|_{H^{-\frac{1}{2}}(\gamma)}$, the Galerkin orthogonality w.r.t. the energy inner product $(\chi,\mu)\mapsto (S\mu)(\chi)$ yields
\begin{align*}
\|\lambda -\lambda_{\sigma_i}^{(K-1)}\|_{H^{-\frac{1}{2}}(\gamma)} & \eqsim
\|\lambda -\lambda_{\sigma_i}\|_{H^{-\frac{1}{2}}(\gamma)}+\|\lambda_{\sigma_i} -\lambda_{\sigma_i}^{(K-1)}\|_{H^{-\frac{1}{2}}(\gamma)}\\
& \eqsim |u-u^{\lambda_{\sigma_i}}|_{H^1(\Omega)}+|u^{\lambda_{\sigma_i}}-u^{\lambda_{\sigma_i}^{(K-1)}}|_{H^1(\Omega)} \\
 &\lesssim E_\text{outer}+E_\text{Uzawa}+E_\text{inner}.
\end{align*}
Recalling \eqref{estim:outer}, we obtain
\begin{align*}
E_\text{outer} \lesssim |u-u_{\tria_{i,K}}^{\lambda_{\sigma_i}^{(K-1)}}|_{H^1(\Omega)}
& \leq |u-u^{\lambda_{\sigma_i}^{(K-1)}}|_{H^1(\Omega)}+|u^{\lambda_{\sigma_i}^{(K-1)}}-u_{\tria_{i,K}}^{\lambda_{\sigma_i}^{(K-1)}}|_{H^1(\Omega)}\\
&\lesssim \|\lambda -\lambda_{\sigma_i}^{(K-1)}\|_{H^{-\frac{1}{2}}(\gamma)}+E_\text{inner},
\end{align*}
and infer that if $E_\text{Uzawa} \lesssim E_\text{outer}$ and $E_\text{inner} /E_\text{outer}$ is sufficiently small, then 
$$
\|\lambda -\lambda_{\sigma_i}^{(K-1)}\|_{H^{-\frac{1}{2}}(\gamma)} \eqsim E_\text{outer}+E_\text{Uzawa}+E_\text{inner}.
$$

\begin{remark} 
It is tempting to circumvent the somewhat cumbersome computation of the localized Aronszajn-Slobodeckij semi-norm $|\cdot|_{\frac{1}{2},\sigma_i,{\rm loc}}$ by the following approach: For $w \in L_1(\gamma)$,
let $P_{\sigma_i}w$ be the continuous piecewise linear function on $\gamma$ w.r.t. the partition $\sigma_i$ defined on each of its vertices $\nu$ as the average of $w$ over the union of the two elements of $\sigma_i$ that contain $\nu$.
Using that $P_{\sigma_i}$ locally preserves constants, standard techniques show that $\|P_{\sigma_i}\|_{\cL(L_2(\gamma),L_2(\gamma))} \lesssim 1$, 
$\|P_{\sigma_i}\|_{\cL(H^1(\gamma),H^1(\gamma))} \lesssim 1$, $\|I-P_{\sigma_i}\|_{\cL(H^1(\gamma),L_2(\gamma))} \lesssim 2^{-i}$, and as a consequence,
$\|P_{\sigma_i}\|_{\cL(H^{\frac12}(\gamma),H^{\frac12}(\gamma))} \lesssim 1$ and
$\|I-P_{\sigma_i}\|_{\cL(H^1(\gamma),H^{\frac12}(\gamma))} \lesssim 2^{-i/2}$.
Using the orthogonality
$g-u^{\lambda_{\sigma_i}} \perp_{L_2(\gamma)} \LL_{\sigma_i}$,
we arrive at
\begin{align*}
|u-u^{\lambda_{\sigma_i}}|_{H^1(\Omega)} &\eqsim \|g-u^{\lambda_{\sigma_i}}\|_{H^{\frac12}(\gamma)}
=\|(I-P_{\sigma_i})(g-u^{\lambda_{\sigma_i}})\|_{H^{\frac12}(\gamma)}\\
&\lesssim
2^{-i/2}\|g-u_{\tria_{i,K}}^{\lambda_{\sigma_i}^{(K-1)}}\|_{H^1(\gamma)}+
\|u^{\lambda_{\sigma_i}}-u_{\tria_{i,K}}^{\lambda_{\sigma_i}^{(K-1)}}\|_{H^{\frac12}(\gamma)},
\end{align*}
which, in view of \eqref{een}, yields
$$
\max\big(|u-u_{\tria_{i,K}}^{\lambda_{\sigma_i}^{(K-1)}}|_{H^1(\Omega)},\|\lambda -\lambda_{\sigma_i}^{(K-1)}\|_{H^{-\frac{1}{2}}(\gamma)} \big) \lesssim \tilde{E}_\text{outer}+E_\text{Uzawa}+E_\text{inner},
$$
where $\tilde{E}_\text{outer}:=2^{-i/2}\|g-u_{\tria_{i,K}}^{\lambda_{\sigma_i}^{(K-1)}}\|_{H^1(\gamma)}$.

The approach of estimating the $H^{\frac{1}{2}}(\gamma)$-norm of a residual by a weighted $H^1(\gamma)$-norm was
introduced in \cite{35.93551} and is often used in the BEM community.
In the current context, however this turns out not to be appropriate.
In our experiments the modified estimator greatly overestimates the error and it even does not reduce when the iterations proceed. 
The reason is that the trace of $u_{\tria_{i,K}}^{\lambda_{\sigma_i}^{(K-1)}}$ is piecewise polynomial w.r.t. an irregular partition of $\gamma$, that moreover is locally much finer than $\sigma_i$.
\end{remark}

\subsection{Setting}\label{S:test-1}
We explore the convergence and optimality properties of the nested inexact preconditioned Uzawa algorithm (Algorithm~\ref{a:fd}).
We consider the L-shaped domain
$\wideparen{\Omega}=(-1,1)^2 \setminus (-1,0)^2$, set $g=0$ and choose $\wideparen{f}\in L_2(\wideparen{\Omega})$ such that the solution $u$ to  \eqref{16} in polar coordinates $(r,\phi)$ centered at $(0,0)$ reads
$$\wideparen{u}(r,\phi) = h(r) r^{2/3} \sin(2/3(\phi+\pi/2)),$$
where
$$
h(r)=\frac{w(3/4-r)}{w(r-1/4)+w(3/4-r)}\quad \textrm{with}\quad w(r)=\left\{
\begin{array}{ll}
r^2 & \mbox{if}\ r>0\\0 & \mbox{else.}
\end{array}\right.
$$
The fictitious domain formulation \eqref{17} is obtained by embedding
$\wideparen{\Omega}$ in the square domain $\Omega=(-1.5,1.5)^2$ and by letting $\wideparen{f}$ to be the zero extension of $f \in L_2(\Omega)$.
Note that in that case, the solution $(u,\lambda)$ of \eqref{17} satisfies $u|_{\Omega \setminus \wideparen{\Omega}}=0$ and 
$$\lambda= \frac{\partial \wideparen{u}}{\partial \vec{n}}\big|_\gamma= \frac{2}{3} h(r) \ r^{-1/3} \in H^s(\gamma), \ s<\frac 1 6.$$

Recall that the approximations of $u$ are continuous piecewise linear polynomials w.r.t. locally refined partitions of $\Omega$ while the approximations of $\lambda$ consist of piecewise constant polynomials w.r.t uniform dyadically refined partitions $\sigma_\bot=\sigma_1 \prec \sigma_2 \prec \cdots$ of $\gamma$, where $\# \sigma_i=2^{i+2}$.
\medskip 

\subsection{Performances of the Wavelet Preconditioner}\label{ss:precon}
We start by assessing the efficiency of the wavelet preconditioner $M_{\sigma_i}^{-1}$ introduced in Example~\ref{ex1}.
It is an approximate inverse of $S_{\sigma_i} \in \Lis(\LL_{\sigma_i},\LL_{\sigma_i}')$ and its quality is characterized by a uniform bound on
\begin{equation} \label{100}
\kappa:=\sup_{i} \rho(M_{\sigma_i}^{-1} S_{\sigma_i}) \sup_{i} \rho(M_{\sigma_i} S_{\sigma_i}^{-1})
=\sup_{i} \kappa(M_{\sigma_i}^{-1} S_{\sigma_i}),
\end{equation}
where for an invertible $C$, $\kappa(C)$ is the spectral condition number defined by $\kappa(C):=\rho(C)\rho(C^{-1})$.
The equality in \eqref{100} follows from
the nesting $\LL_{\sigma_i} \subset \LL_{\sigma_{i+1}}$ and the multi-level character of the preconditioner.

Unfortunately, the exact computation of $\kappa(M_{\sigma_i}^{-1} S_{\sigma_i})$ is impossible because the evaluation of $S_{\sigma_i}$ requires the inverse of the infinite dimensional $A \in \Lis(\UU,\UU')$. 
Instead, we monitor the computable quantity $\kappa(M_{\sigma_i}^{-1} S_{\sigma_i \tau_i})$, where for a partition $\tria_i \in {\mathcal T}$ of $\Omega$,  $S_{\sigma_i \tau_i}$ is an approximation of $S_{\sigma_i}$.
We propose to define $S_{\sigma_i \tau_i}:=B_{\sigma_i \tau_i} A^{-1}_{\tau_i} B_{\sigma_i \tau_i}'$, where $B_{\sigma_i \tau_i} \in \cL(\UU_{\tau_i},\LL_{\sigma_i}')$ and $A_{\tria_i} \in \Lis(\UU_{\tria_i},\UU_{\tria_i}')$ are defined by $(B_{\sigma_i \tau_i}w)(\mu)=b(w,\mu)$ ($w \in \UU_{\tria_i}$, $\mu \in \LL_{\sigma_i}$) and $(A_{\tria_i}w)(v)=a(w,v)$ ($w,v\in \UU_{\tria_i}$), respectively. 
Given $\sigma_i$, we know that $S_{\sigma_i \tau_i} \rightarrow  S_{\sigma_i} \in \Lis(\LL_{\sigma_i},\LL_{\sigma_i}')$ when the diameter of the largest element in $\tria_i$ tends to zero.
Furthermore, $S_{\sigma_i \tau_i}$ is uniformly spectrally equivalent to $S_{\sigma_i}$ under a uniform LBB condition.
To achieve the latter, we perform refinements until the triangles $T\in \tria_i$ intersecting the boundary $\gamma$ have  diameters smaller than $3$ times the length of the elements in $\sigma_i$, see \cite{GiraultGlowinski1995}.
At this point, we emphasize that the validity of the LBB condition 
is enforced only to assess the performances of the wavelet preconditioner but is not required for the nested inexact Uzawa algorithm.

The results are collected in Table~\ref{tab1}.
In the first two columns, we
report the number of elements in $\sigma_i$ and $\tria_i$, while the third
and fourth column show the condition numbers of the Schur
complement and its preconditioned version, respectively. 
The last two columns contains the spectral radius of the preconditioned Schur
complement and that of its inverse.
As predicted, the condition number of the unpreconditioned matrices increases by a factor 2 when the level $i$ of refinement is
increased by 1. In contrast, the efficiency of the wavelet preconditioner is confirmed (fourth column) by the nearly constant values of the condition number of the preconditioned
Schur complements. The fact that these condition numbers even decrease with an increasing $\# \sigma_i$ is an artifact caused by the replacement of $A^{-1}$ by $A_{\tria_i}^{-1}$.

It is worth noting that from the quantities 
$\rho(M_{\sigma_i}^{-1} S_{\sigma_i \tau_i})$ and
$\rho(M_{\sigma_i}S_{\sigma_i \tau_i}^{-1})$ reported in Table~\ref{tab1},
it is possible to obtain an estimate for the optimal parameter $\beta$ defined by \eqref{beta.opt}. 
In fact, we observe that
$\rho(M_{\sigma_i}^{-1} S_{\sigma_i \tau_i})
+\rho(M_{\sigma_i}S_{\sigma_i \tau_i}^{-1})^{-1} \approx 0.8$ so from now on we set $\beta=2/0.8$.

\begin{table}[ht!]
\caption{Spectral condition numbers of  the preconditioned and
  unpreconditioned approximate Schur complement.
$\kappa_{S} = \kappa(S_{\sigma_i \tau_i})$,
$\kappa_{M^{-1}S} = \kappa(M_{\sigma_i}^{-1} S_{\sigma_i \tau_i})$,
$\rho_{S}=\rho(S_{\sigma_i})$,
$\rho_{M^{-1} S}=\rho(M_{\sigma_i}^{-1} S_{\sigma_i})$,
$\rho_{S^{-1}}=\rho(S_{\sigma_i}^{-1})$,
$\rho_{MS^{-1}}=\rho(M_{\sigma_i} S_{\sigma_i}^{-1})$.}
\label{tab1}
\begin{tabular}{|c|c|c|c|c|c|}\hline
$\# \sigma_i$ &
$\# \tria_i $ &
$\kappa_{S}$ &
$\kappa_{M^{-1}S}$ &
$\rho_{M^{-1}S}$ &
$\rho_{M S^{-1}}$ \\\hline
8    &  1741  & 6.71 & 6.71 & 0.563 & 11.9 \\ \hline
16   &  2010  & 13.5 & 6.44 & 0.575 & 11.2 \\ \hline
32   &  4770  & 28.0 & 6.04 & 0.587 & 10.3 \\ \hline 
64   & 11326  & 57.8 & 5.83 & 0.593 & 9.83  \\ \hline
128  & 23398  & 118 & 5.74 & 0.596 & 9.64 \\ \hline
256  & 46134  & 238 & 5.69 & 0.597 & 9.54 \\ \hline
512  & 85460  & 489 & 5.67 & 0.598 & 9.48 \\ \hline
1024 & 156092 & 980 & 5.65 & 0.598 & 9.46 \\ \hline
\end{tabular}
\end{table}

\subsection{Performances of the Nested Inexact Uzawa Algorithm}

We now investigated the performances of the nested inexact preconditioned Uzawa iteration (Algorithm~\ref{a:fd}).
The routine $\afem$ given in Algorithm~\ref{a:AFEM} serves as an inner solver in Algorithm~\ref{a:fd} and is driven by the a posteriori error estimator ${\mathcal E}$, see \eqref{ellipt_est}.
Apart from data oscillation terms, it consists of the square root of
the sum of weighted norms of jumps of normal derivatives of the
current approximation for $u$ over the edges of the partition of
$\Omega$. 
The numerical observations in \cite{37} indicate that, ignoring the data oscillations, ${\mathcal E}$ is
approximately a factor $3\sqrt{2}$ larger than the error it estimates
(the factor $\sqrt{2}$ stems from the fact that unlike in \cite{37} our estimator each jump is counted twice). 
Therefore, in the following we scale $\mathcal E$ by a factor $\sqrt{2}/6$ and set the constant $C_{\text{upp}} = 1$.
Note that the same scaling is applied to the quantity $E_\text{inner}$ defined in \eqref{upper}. 
In addition, we set the constant
$\Cst=\Cst(f,g)=\bar{\Cst}\,(\|f\|_{L_2(\Omega)}+\|g\|_{H^1(\gamma)})$
with $\bar{\Cst}=0.1$, 
$K=6$, $\zeta=\sqrt{2}$, $\theta=0.1$ and recall that $\beta$ defined in \eqref{beta.opt} is set to $\beta=2/0.8$ (see Section~\ref{ss:precon}).

Figure~\ref{F:meshes} displays the meshes $\tria_{0,K}=\tria_\bot$ (initial mesh), $\sigma_0=\emptyset$ together with the adaptively or uniformly refined meshes $\tria_{i,K}$, $\sigma_i$ obtained at the first, third
and fifth outer iteration $i=1,3,5$ of Algorithm~\ref{a:fd}.
\begin{remark} To illustrate the point made in Remark~\ref{LBB} about not imposing the LBB condition, we observe 
that for the mesh corresponding to $i=5$ in Figure~\ref{F:meshes}, the triangle that covers the lower-right corner of the L-shaped domain 
 contains 7 elements of the boundary mesh $\sigma_5$.
This implies $\inf_{\mu \in \LL_{\sigma_5}} \sup_{v \in \UU_{\tria_{5,K}}} b(v,\mu)=0$,
 so that the fully discrete saddle point problem on $\UU_{\tria_{i,K}}\times \LL_{\sigma_i}$ is even singular,
and in particular that the LBB condition does not hold.
\end{remark}
Figure~\ref{F:sols3D} shows the approximations
$u_{\tria_{i,K}}^{\lambda_{\sigma_i}^{(K-1)}}$ at the third and sixth
outer iterations $i=3,6$, while Figure~\ref{F:lambda_sol} provides a comparison between the approximation  $\lambda_{\sigma_i}^{(K-1)}$ 
and the $L_2(\gamma)$-orthogonal projection of the exact solution $\lambda$ onto $\LL_{\sigma_{i}}$  for $i=3$ and $6$. 
In Figure~\ref{F:sols} the traces of the
numerical solution $u_{\tria_{i,K}}^{\lambda_{\sigma_i}^{(K-1)}}$ on the boundary $\gamma$ are depicted for $i=1,3,5,6$ in red and
compared to the (zero)  trace of the exact solution.

\begin{figure}[ht!]
\begin{center}
\begin{overpic}[width=0.45\textwidth]%
{P1_mesh0-eps-converted-to}
\put(41.5,32.7){\includegraphics[width=0.235\textwidth]{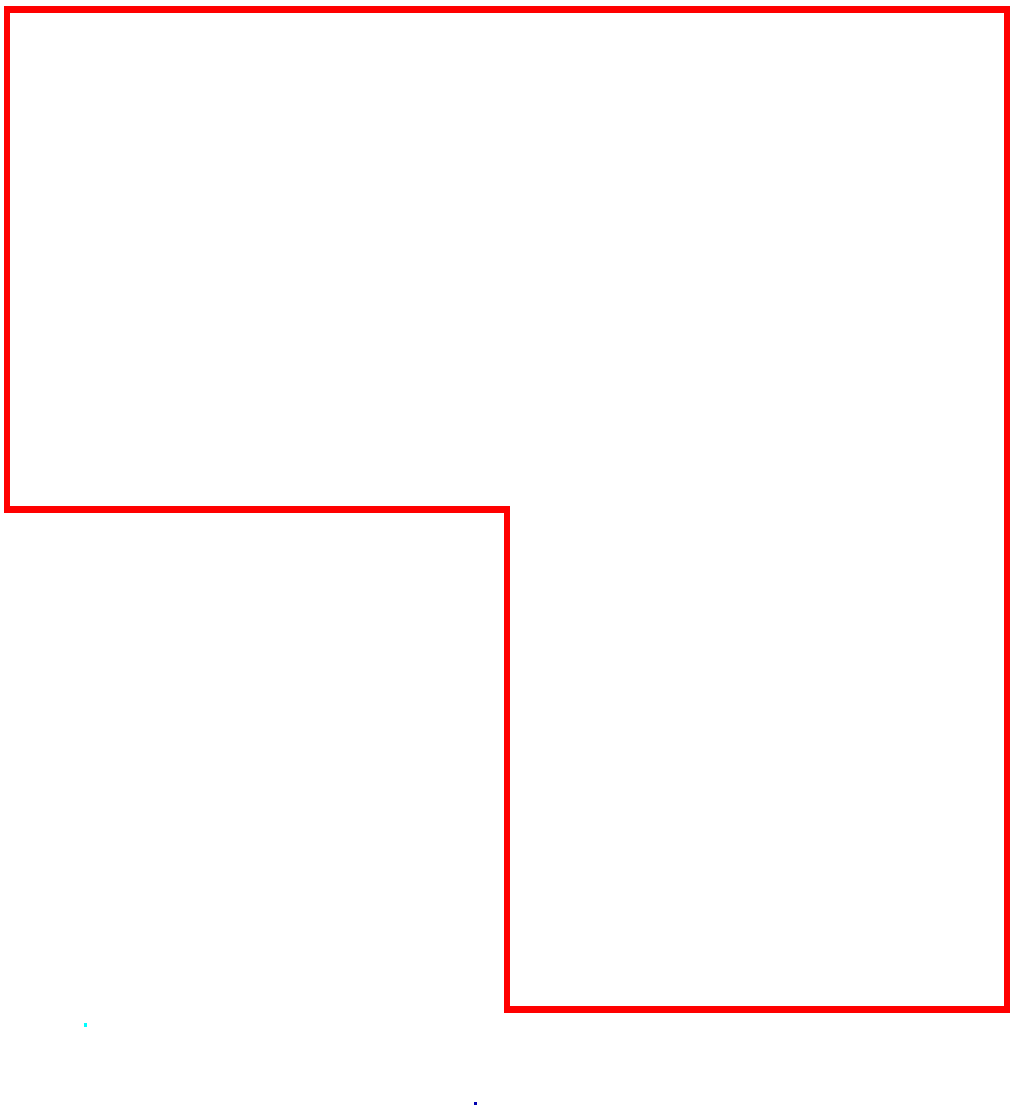}}
\end{overpic}
\begin{overpic}[width=0.45\textwidth]%
{P1_mesh1-eps-converted-to}
\put(40,38.8){\includegraphics[width=0.243\textwidth]{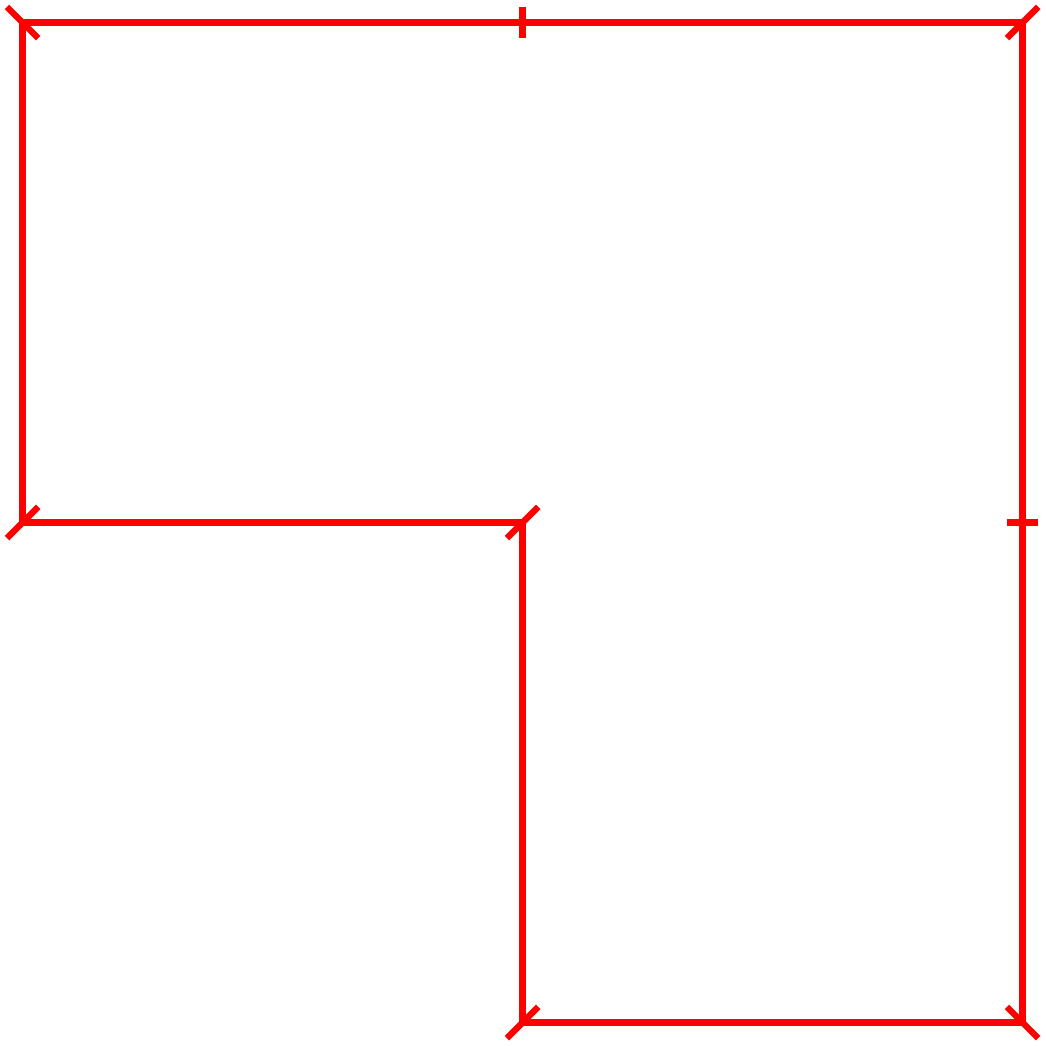}}
\end{overpic}\\
\begin{overpic}[width=0.45\textwidth]%
{P1_mesh3-eps-converted-to}
\put(40,38.8){\includegraphics[width=0.243\textwidth]{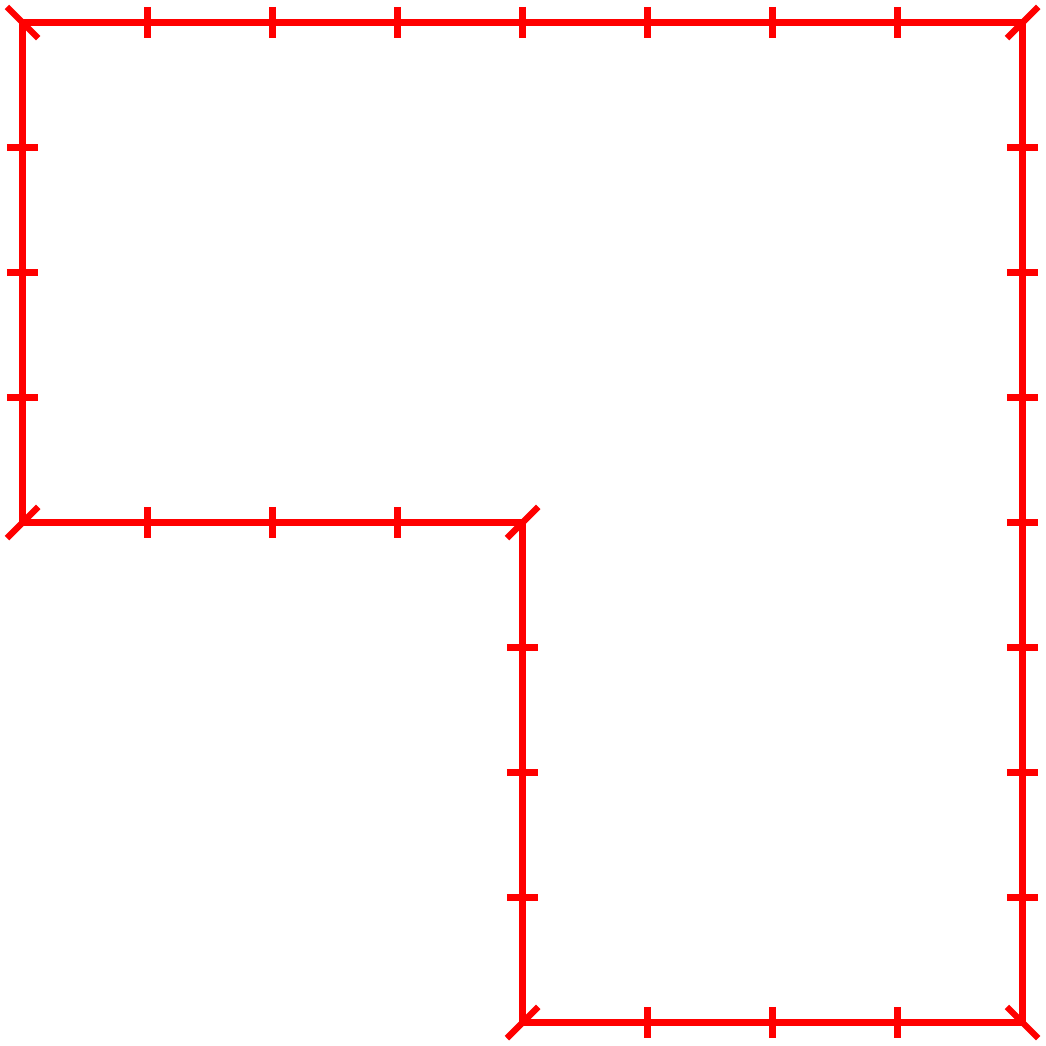}}
\end{overpic}
\begin{overpic}[width=0.45\textwidth]%
{P1_mesh5-eps-converted-to}
\put(40,38.8){\includegraphics[width=0.243\textwidth]{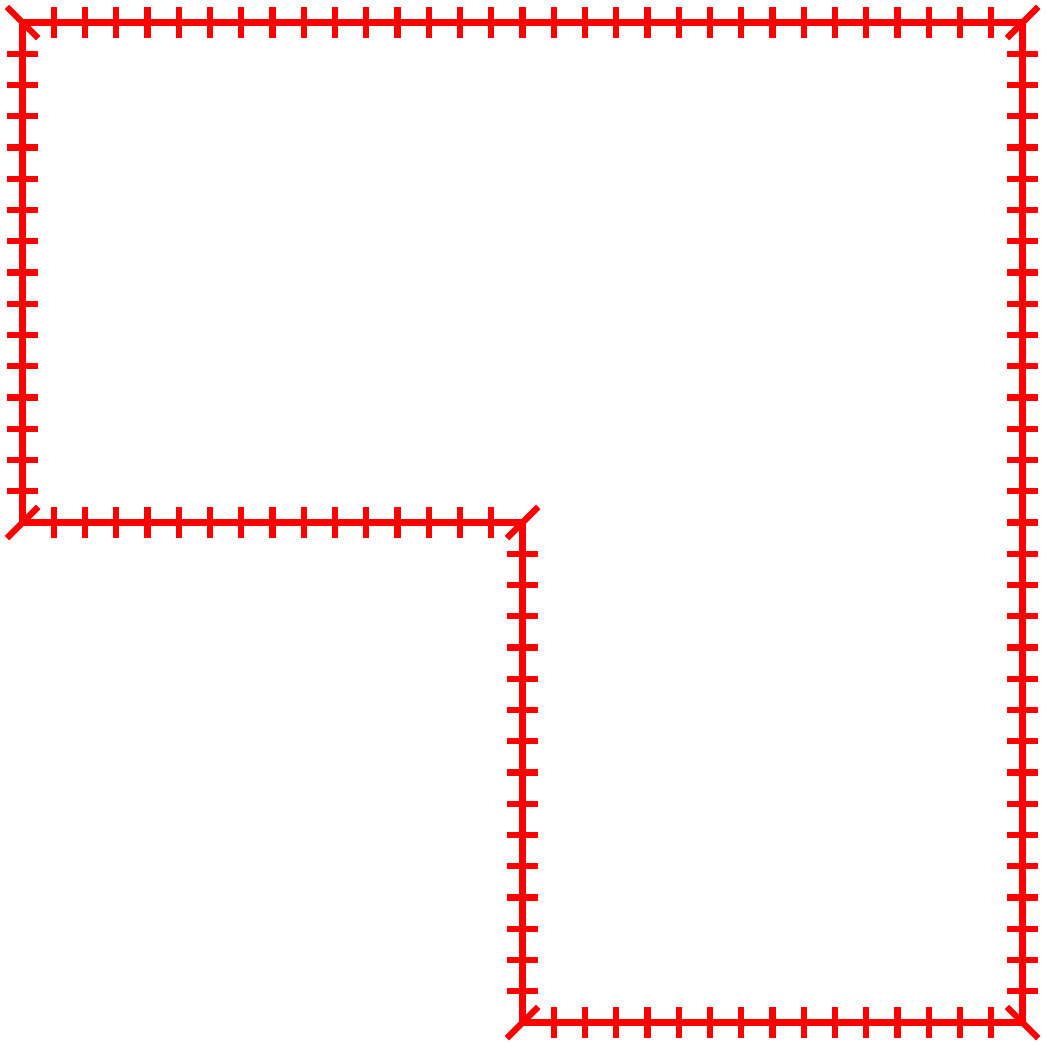}}
\end{overpic}
\caption{Meshes $\tria_{i,K}$, $\sigma_i$ for $i=0,1,3,5$ produced with $K=6$, $\zeta=\sqrt{2}$, and $\theta=0.1$.}
\label{F:meshes}
\end{center}
\end{figure}	


\begin{figure}[ht!]
\begin{center}
\includegraphics[width=0.45\textwidth]{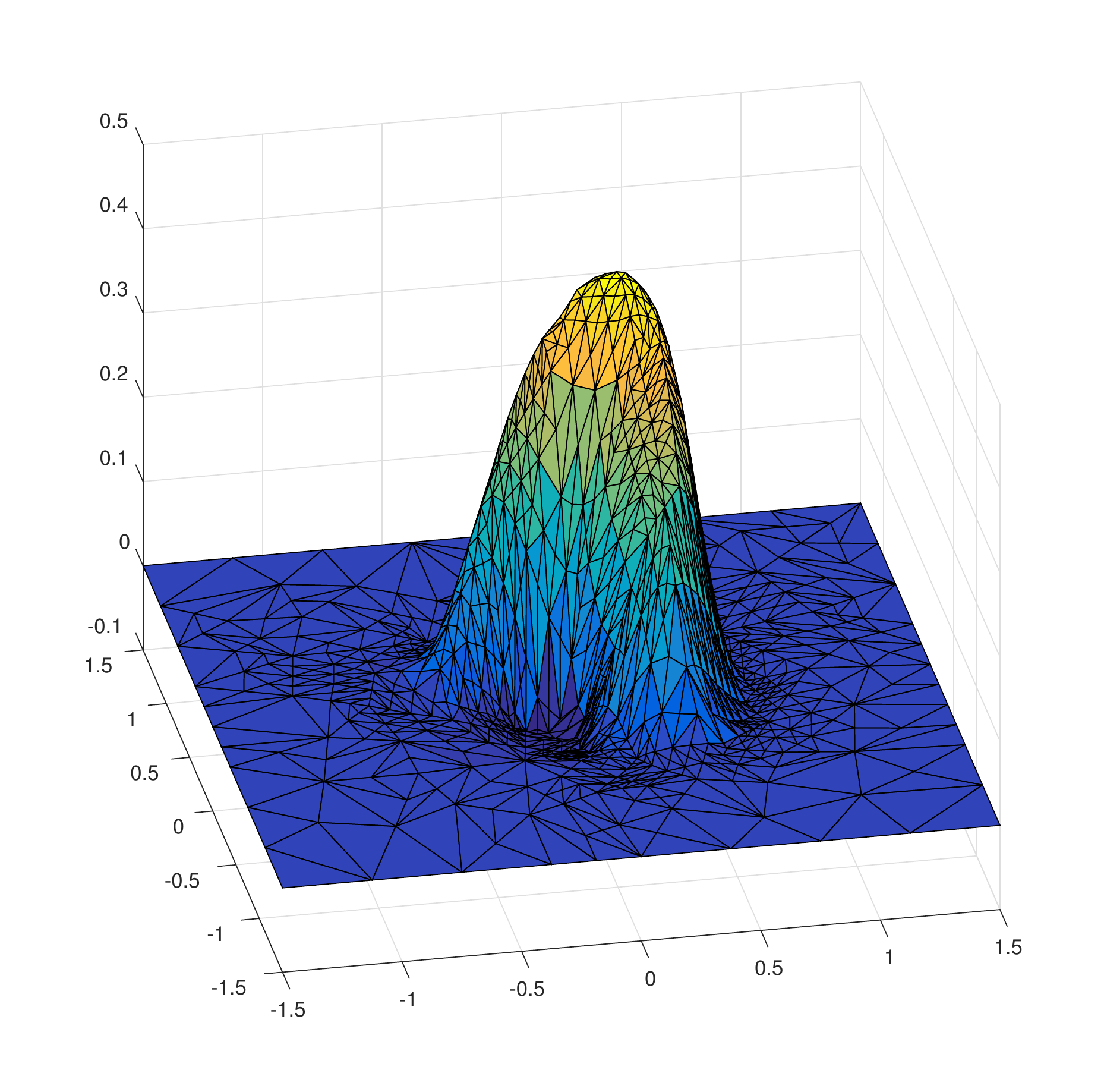}
\includegraphics[width=0.45\linewidth]{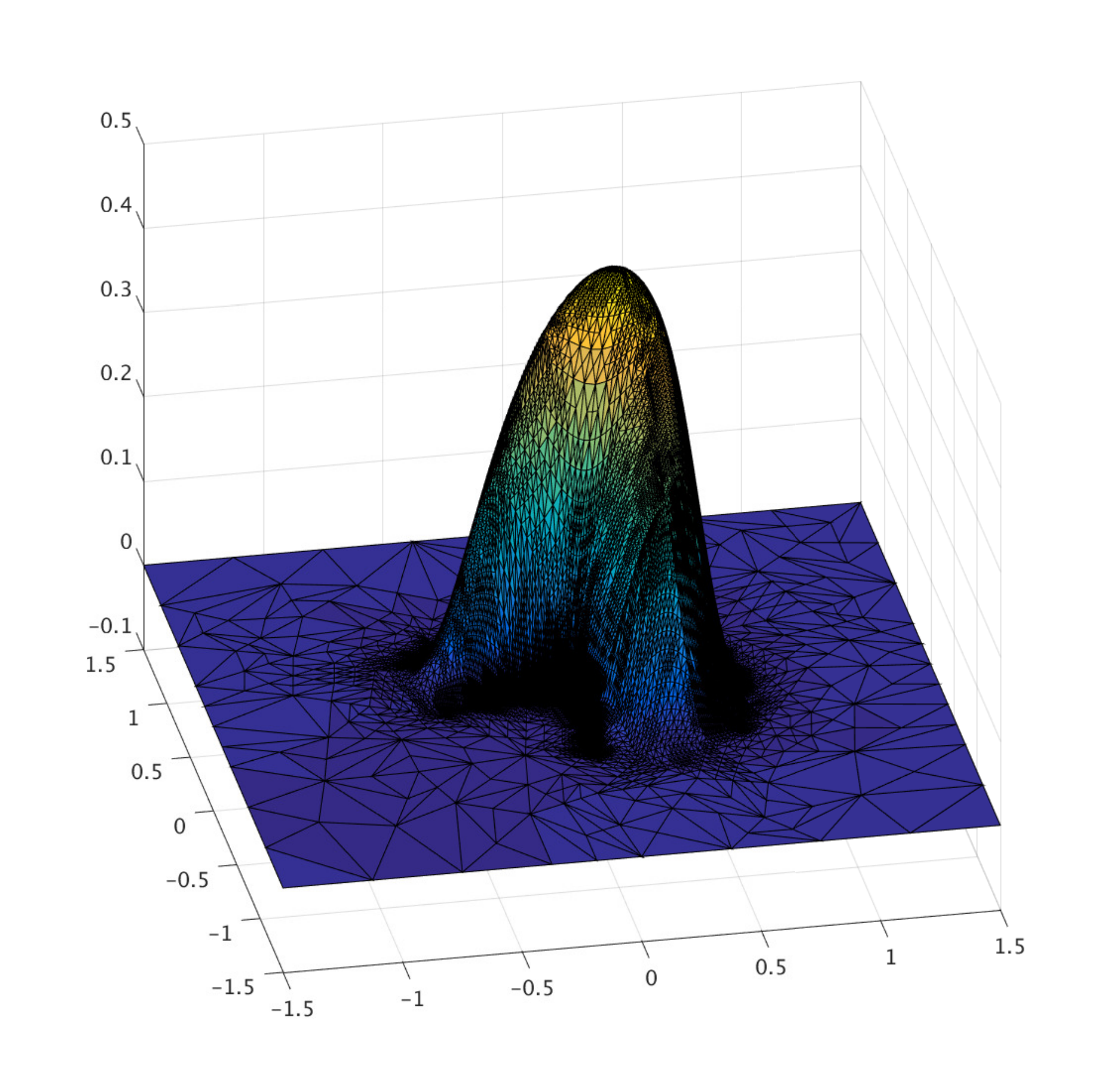}
\caption{Approximations $u_{\tria_{i,K}}^{\lambda_{\sigma_i}^{(K-1)}}$ for $i=3$ (left) and $i=6$ (right) produced with $K=6$, $\zeta=\sqrt{2}$, $\theta=0.1$.}
\label{F:sols3D}
\end{center}
\end{figure}


\begin{figure}[ht!]
\begin{center}
\includegraphics[width=0.45\textwidth]{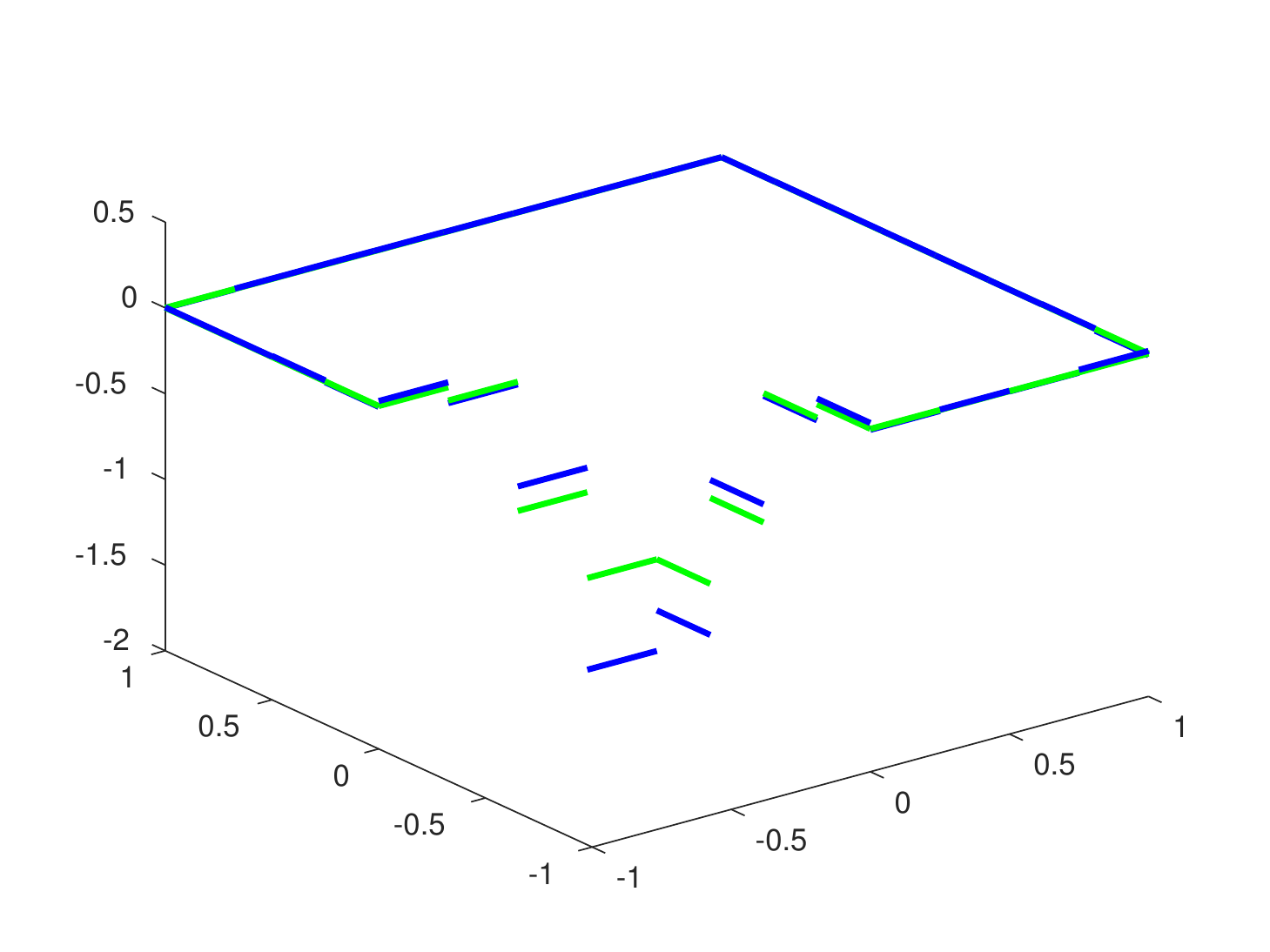}
\includegraphics[width=0.45\textwidth]{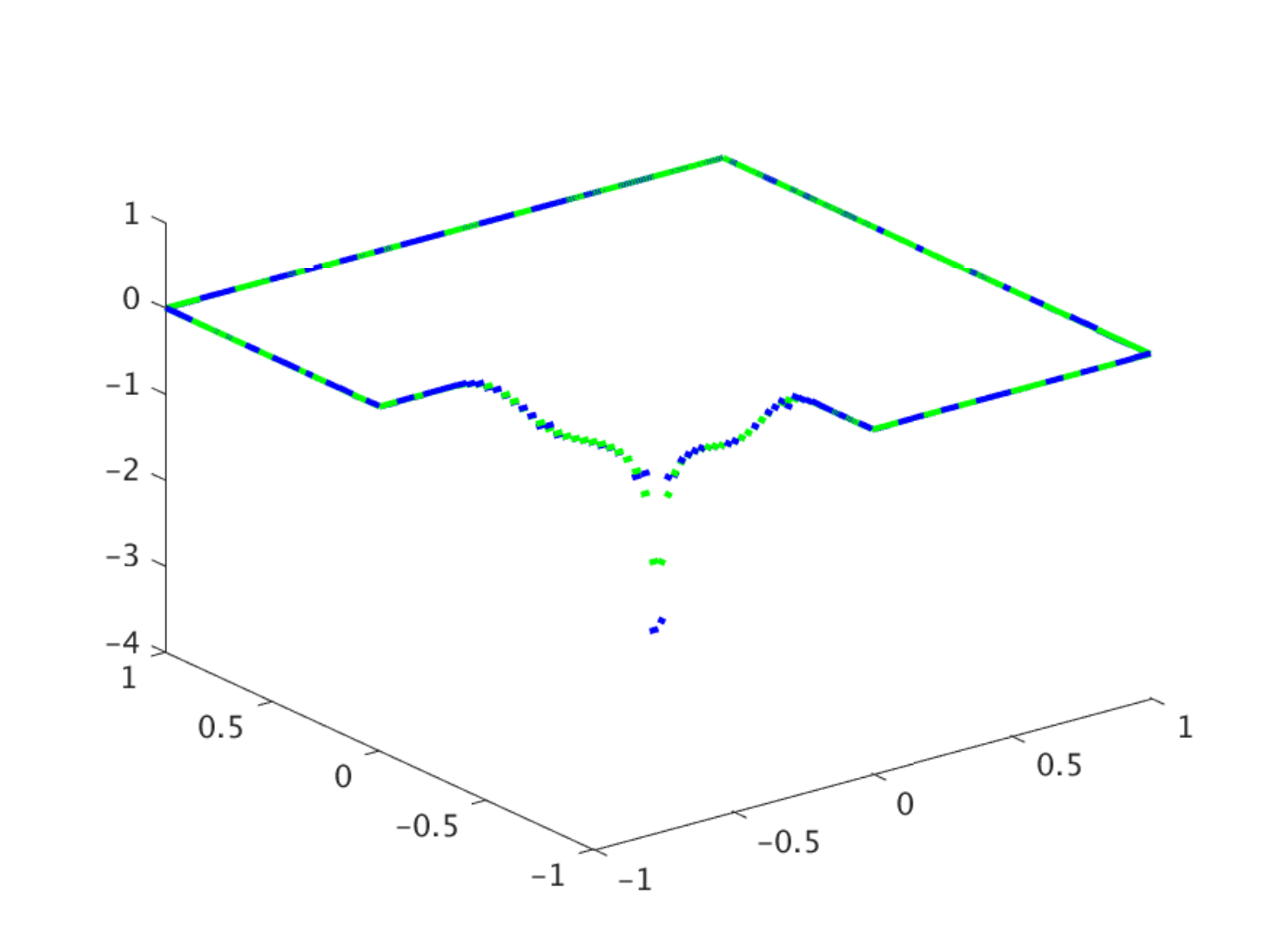}
 \caption{Comparison between $\lambda_{\sigma_i}^{(K-1)}$ (blue) and the $L_2(\gamma)$-orthogonal projection of $\lambda$ onto
    $\LL_{\sigma_{i}}$(green)
    for $i=3$ (left) and $i=6$ (right).}
  \label{F:lambda_sol}
  \end{center}
  \end{figure}

\begin{figure}[ht!]
\begin{center}
\includegraphics[width=0.45\textwidth]{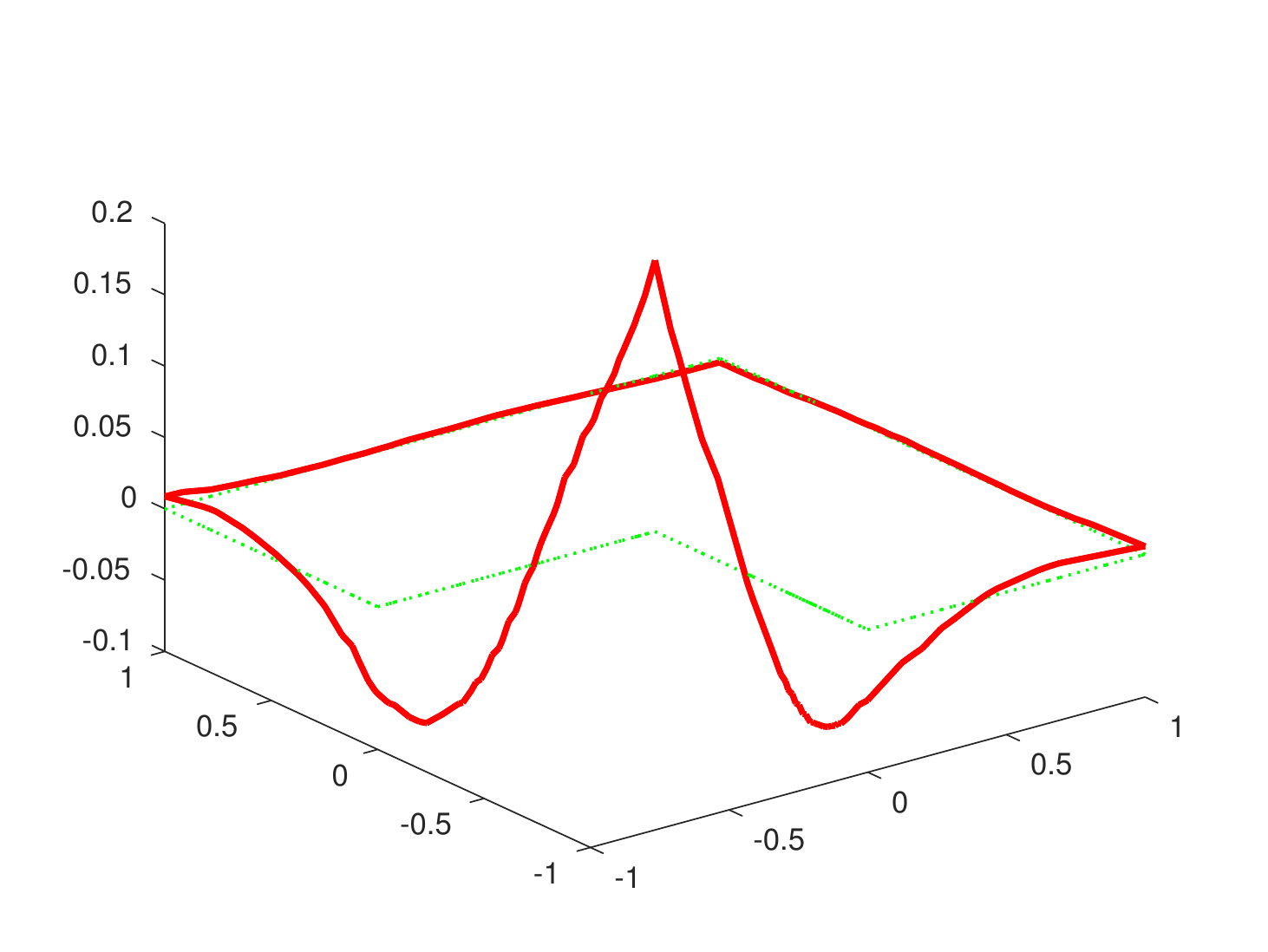}
\includegraphics[width=0.45\linewidth]{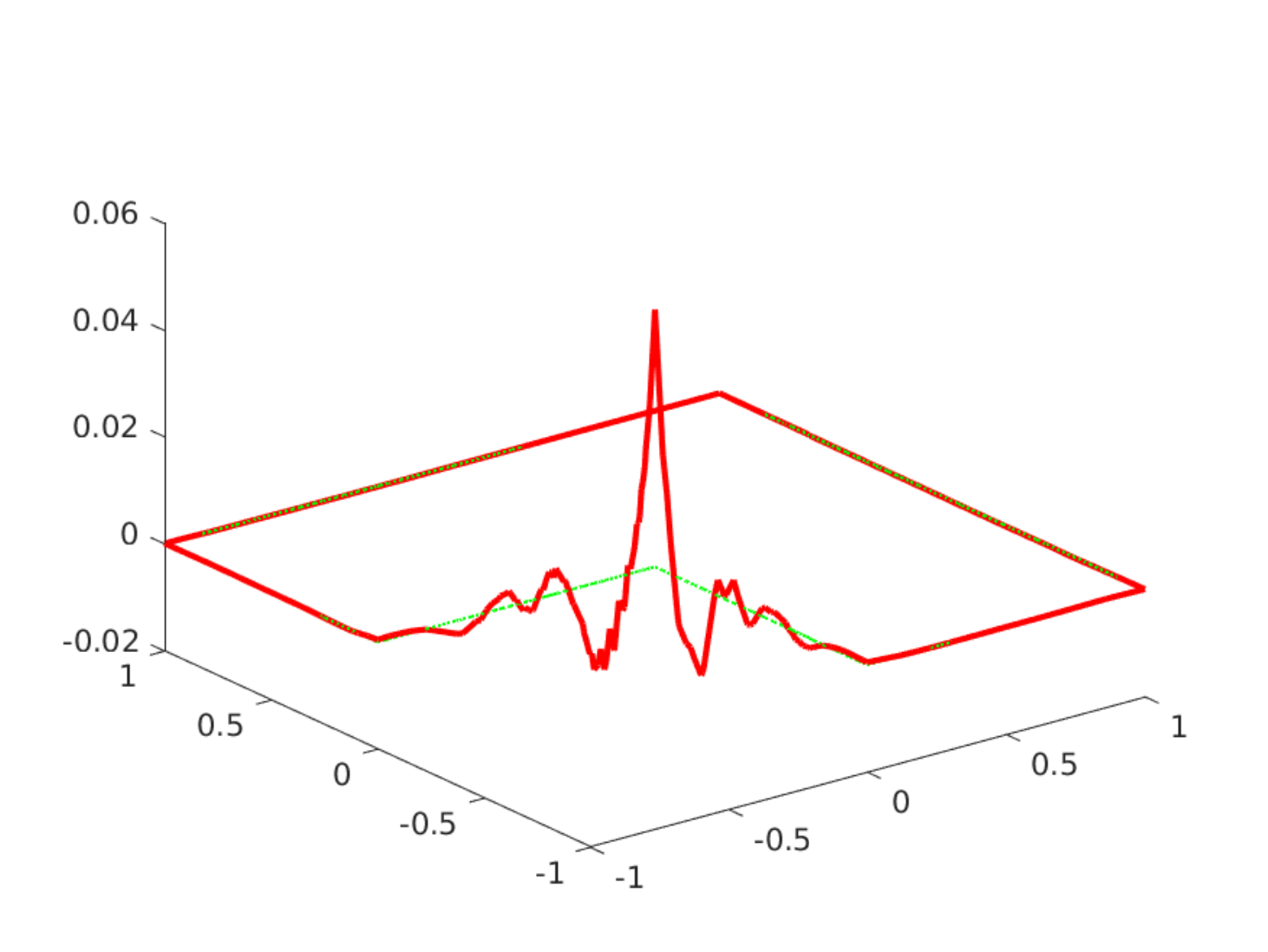}\\
\includegraphics[width=0.45\textwidth]{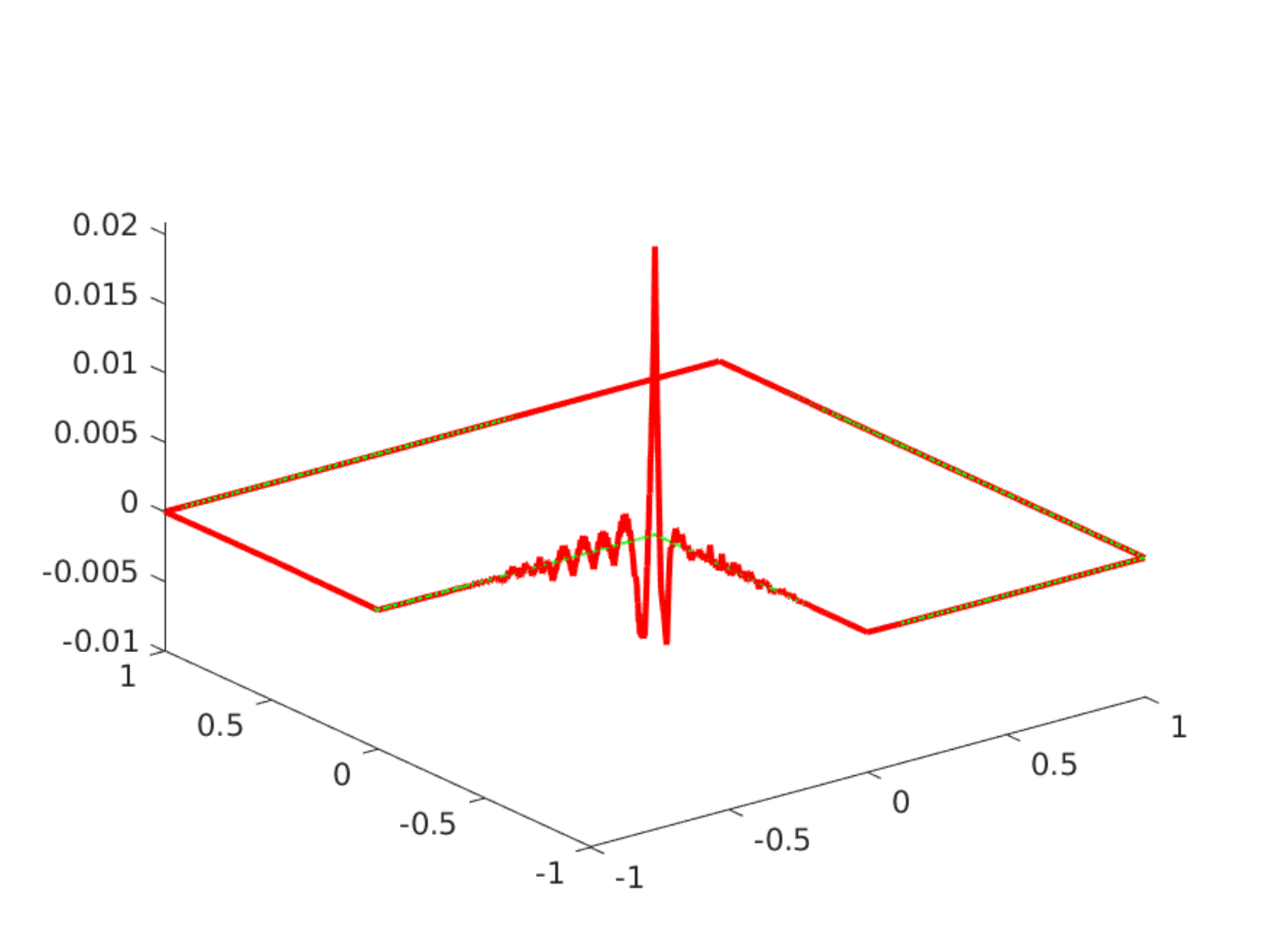}
\includegraphics[width=0.45\linewidth]{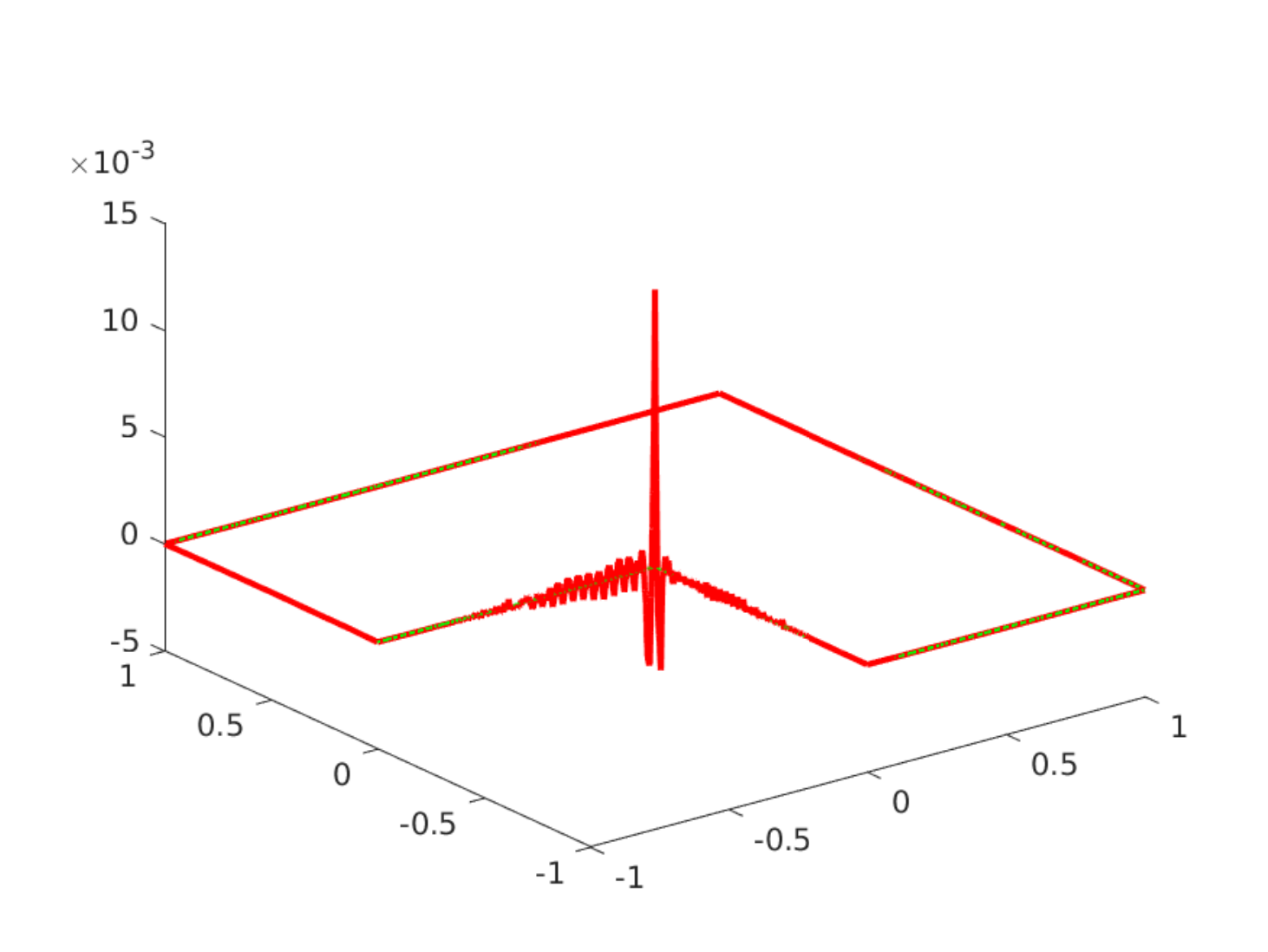}
\caption{Traces $u_{\tria_{i,K}}^{\lambda_{\sigma_i}^{(K-1)}}|_{\gamma}$ (red)
    and $u|_{\gamma} (=0)$ (green) for $i=1,3,5,6$, obtained with $K=6$, $\zeta=\sqrt{2}$, and $\theta=0.1$.}
\label{F:sols}
\end{center}
\end{figure}

In Figure~\ref{F:errors_3}, for $i=1,\ldots,I:=10$, we report the errors
$|u-u_{\tria_{i,K}}^{\lambda_{\sigma_i}^{(K-1)}}|_{H^1{(\Omega)}}$
and $\|\lambda-\lambda_{\sigma_i}^{(K-1)}\|_{H^{-1/2}{(\gamma)}}$, and compare them to the estimators.
We observe a remarkable agreement between the errors and the estimators. In addition, note that  
${E_{\rm outer}}$ and ${E_{\rm inner}}$ exhibit rates of decay comparable with the ones of the errors, whereas ${E_{\rm Uzawa}}$ is in all cases much smaller than the other indicators, displaying a plateau whenever $K>2$ inner iterations are performed. 
For completeness, we mention that the computation of the norm $\|\cdot\|_{H^{-1/2}{(\gamma)}}$ is approximated by 
first building the $L_2(\gamma)$-orthogonal projection $\mu$ of the error $\lambda-\lambda_{\sigma_i}^{(K-1)}$
onto $\LL_{\sigma_{I+2}}$ and then employing \eqref{precond} to get
$\|\lambda-\lambda_{\sigma_i}^{(K-1)}\|_{H^{-1/2}{(\gamma)}}\simeq \sqrt{(M_{\sigma_{I+2}} \mu)(\mu)}$.

\begin{figure}[ht!]
\begin{center}
\includegraphics[width=0.45\textwidth]{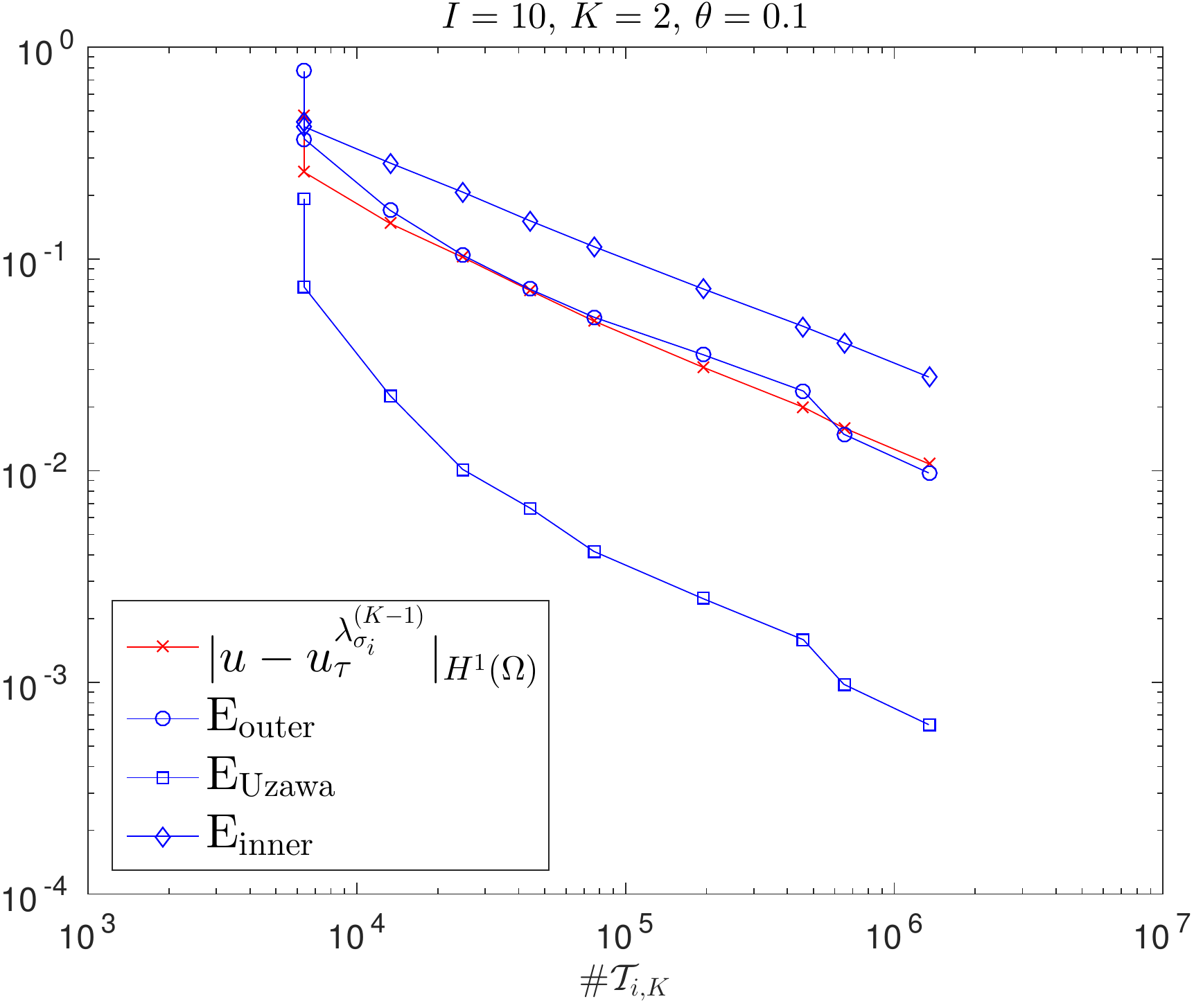}
\includegraphics[width=0.45\textwidth]{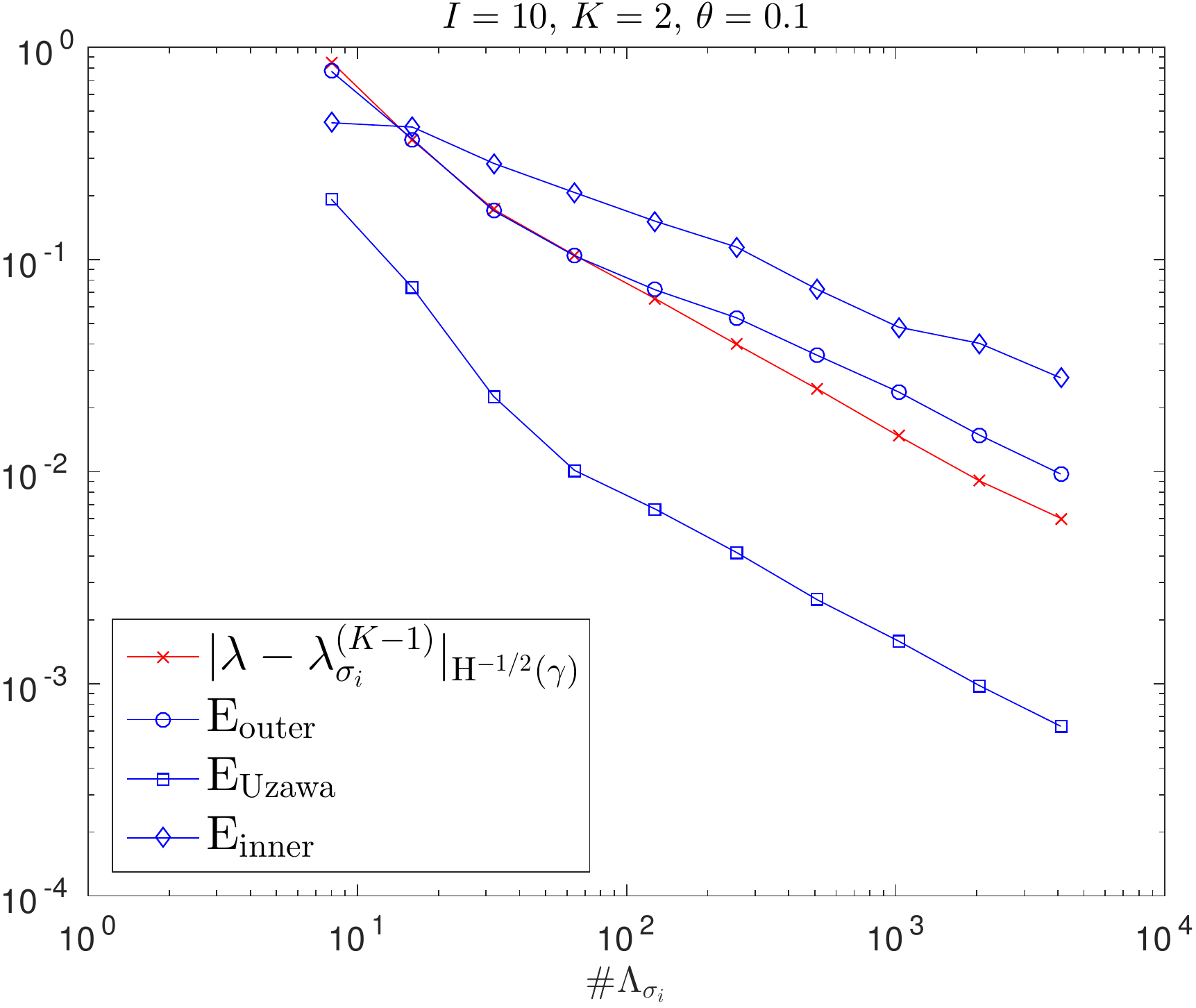} \\
\includegraphics[width=0.45\textwidth]{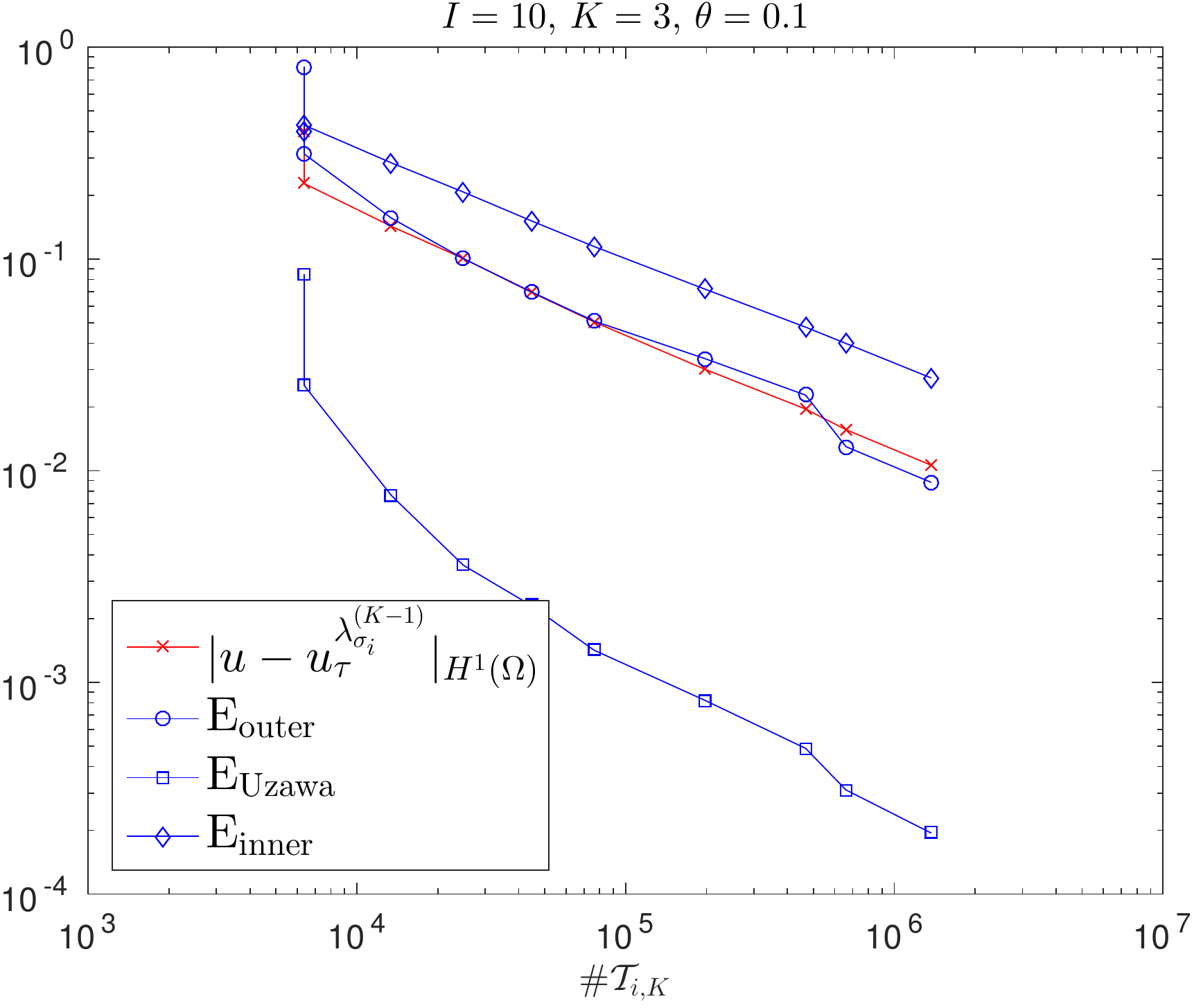}
\includegraphics[width=0.45\textwidth]{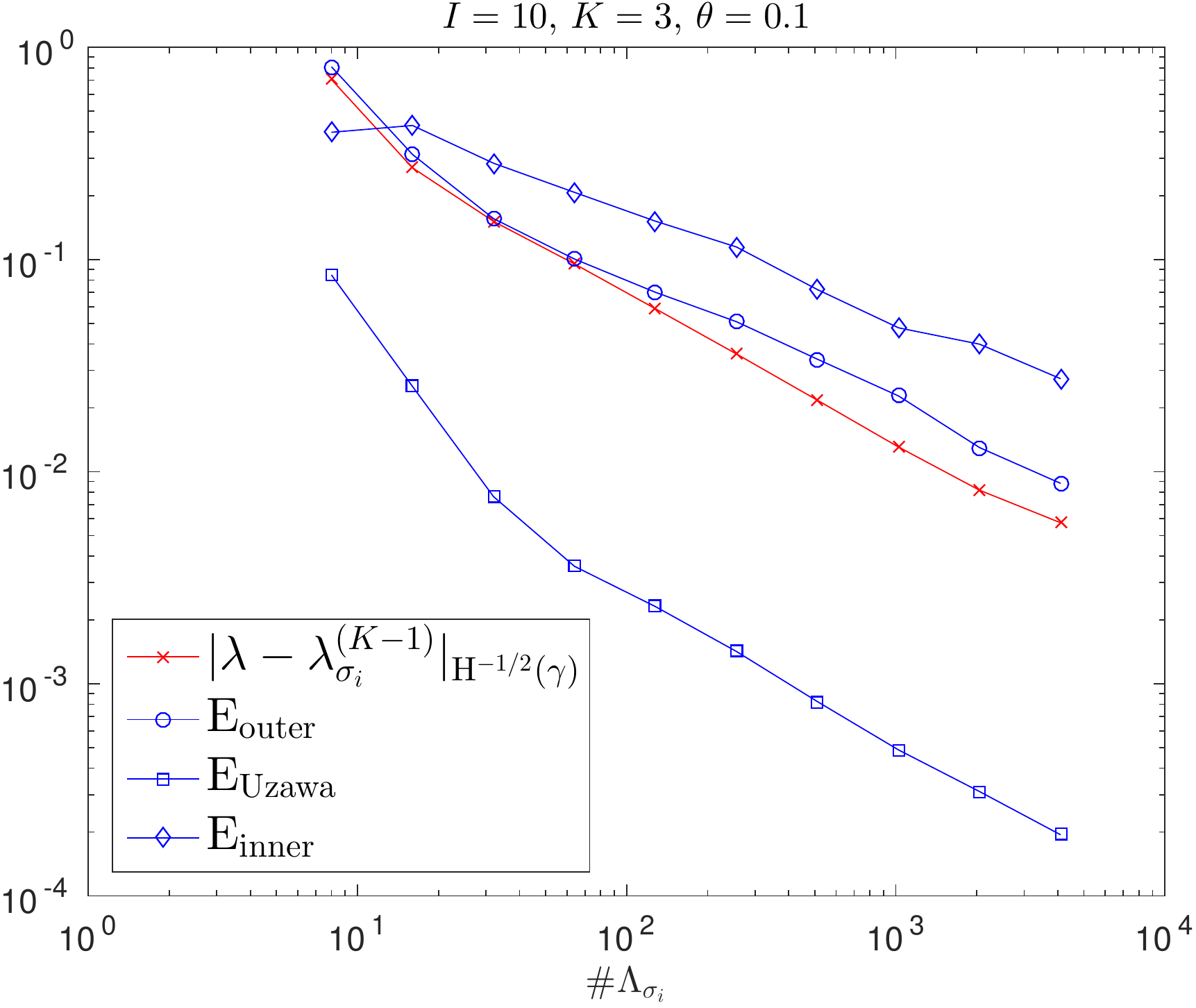}\\
\includegraphics[width=0.45\textwidth]{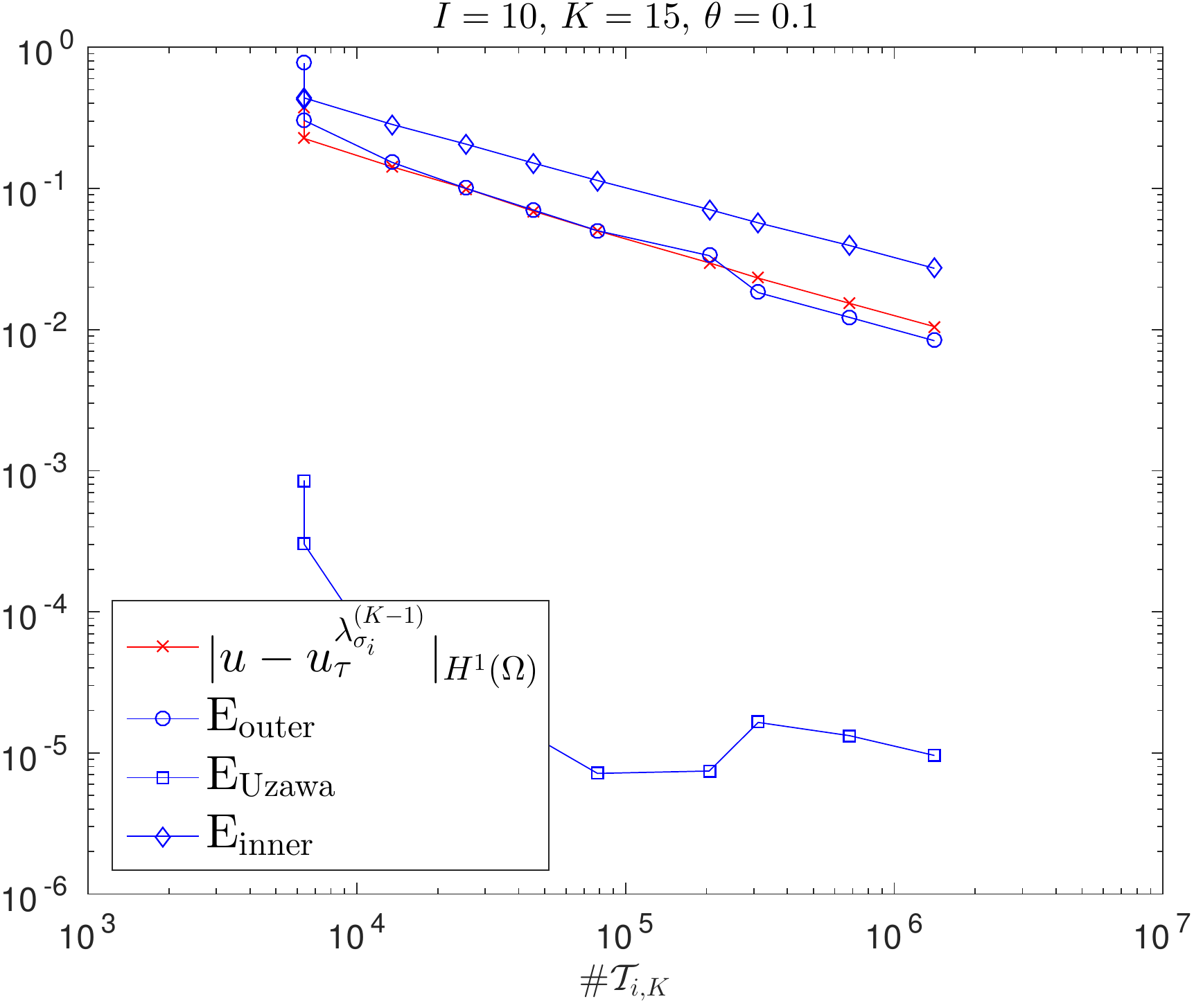}
\includegraphics[width=0.45\textwidth]{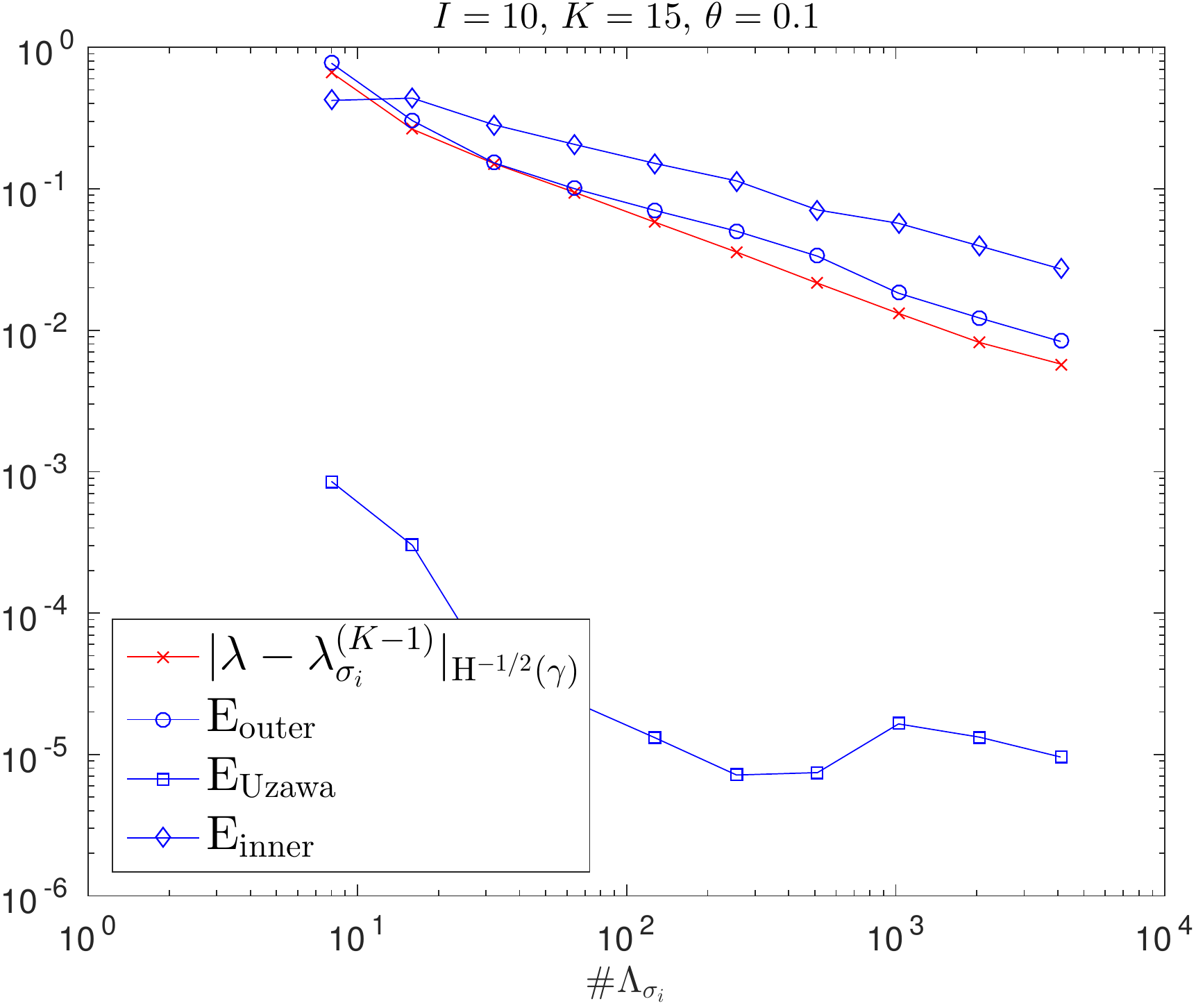}
\caption{Errors $|u-u_{\tria_{i,K}}^{\lambda_{\sigma_i}^{(K-1)}}|_{H^1{(\Omega)}}$ (left) and $\|\lambda-\lambda_{\sigma_i}^{(K-1)}\|_{H^{-1/2}{(\gamma)}}$ (right), and estimators for 
$\zeta=\sqrt{2}$, $\theta=0.1$, and $K=2$ (top), $K=3$ (middle)  and $K=15$ (bottom).}
\label{F:errors_3}
\end{center}
\end{figure}

In Table~\ref{P1_Ntot-errs}, we report the rates of convergence for the errors 
$|u-u_{\tria_{i,K}}^{\lambda_{\sigma_i}^{(K-1)}}|_{H^1{(\Omega)}}$ and
$\|\lambda-\lambda_{\sigma_i}^{(K-1)}\|_{H^{-1/2}{(\gamma)}}$
with respect to $\# \tria_{i,K}$ and $\# \sigma_i$, respectively.
The rates are computed after excluding the first three iterations of the algorithms.
The convergence rate of the $H^1(\Omega)$-error for $u$ is always close to the expected value $0.5$ while 
the convergence rate of the ${H^{-1/2}{(\gamma)}}$-error for $\lambda$ is $0.69$. The latter is in agreement with the theoretical rate $\frac{2}{3}$ expected since $\lambda\in H^s(\gamma)$ for any $s<\frac{1}{6}$. 
Finally, in the last two columns we report the number of elements of $\tria_{I,K}$ and $\sigma_I$ at the last iteration $I=10$.



In Table~\ref{P1_estimators}, we report the rates of convergence of the estimators ${E_{\rm outer}}$, ${E_{\rm Uzawa}}$ and ${E_{\rm inner}}$. The rates observed for $E_{\rm outer}$ are closer to the theoretical value $\frac{2}{3}$ when $K$ increases.
The rates obtained for ${E_{\rm inner}}$ always matche (up to the third significant digit) the theoretical rate expected for $|u-u_{\tria_{i,K}}^{\lambda_{\sigma_i}^{(K-1)}}|_{H^1{(\Omega)}}$. Finally, the low rates exhibited by ${E_{\rm Uzawa}}$ are explained by the appearance of plateaux for larger values of $i$ when $K >3$ inner iterations are performed.

We conclude this section with one additional table focusing on the behavior of the inner adaptive solver.
Recall that in Algorithm ~\ref{a:fd} a fixed number of inner iterations $j=1,..,K$ is performed within each outer iteration $i=1,..,I$.
Each of these inner iterations lead to bulk mesh refinement (Algorithm~\ref{a:AFEM}) whenever $\Upper \cE(f,\chi,\tria_{k}) > \Cst\zeta^{-i}$. 
In Table~\ref{P1_k_afem}, for each outer iteration $i$, we report  the number of times that the bulk mesh refinement is  performed and observe that the refinements are never performed after the second inner iteration.

\begin{table}[ht!]
\caption{Computed rates of convergence of 
$e_{u}:=|u-u_{\tria_{i,K}}^{\lambda_{\sigma_i}^{(K-1)}}|_{H^1{(\Omega)}}$
and
$e_{\lambda}:=\|\lambda-\lambda_{\sigma_i}^{(K-1)}\|_{H^{-1/2}{(\gamma)}}$
w.r.t. 
$\# \tria_{i,K}$ and $\# \sigma_i$, 
for different values of $K$ ($I=10$, $\theta=0.1$ and $\zeta=\sqrt{2}$).}
\begin{tabular}{|c|c|c|c|c|}\hline
  & $e_{u}$
  & $e_{\lambda}$
  & $\# \tria_{I,K}$
  & $\# \sigma_I$ \\ \hline
$K=2$  & 0.56  & 0.70 & 1344310 & 4096 \\ \hline
$K=3$  & 0.55  & 0.69 & 1372266 & 4096 \\ \hline
$K=6$  & 0.56  & 0.69 & 1411114 & 4096 \\ \hline
$K=9$  & 0.56  & 0.69 & 1411274 & 4096 \\ \hline
$K=15$ & 0.56  & 0.69 & 1411254 & 4096 \\ \hline
\end{tabular}
\label{P1_Ntot-errs}
\end{table}

\begin{table}[ht!]
\caption{Computed rates of convergence of the error estimators
$E_{\rm outer}$, $E_{\rm Uzawa}$ and $E_{\rm inner}$, respectively,
for different values of $K$ ($I=10$, $\theta=0.1$ and $\zeta=\sqrt{2}$).}
\begin{tabular}{|c|c|c|c|}\hline
  & $E_{\rm outer}$
  & $E_{\rm Uzawa}$
  & $E_{\rm inner}$ \\ \hline
$K=2$ & 0.58 & 0.70 & 0.50 \\ \hline
$K=3$ & 0.59 & 0.71 & 0.50 \\ \hline
$K=6$ & 0.63 & 0.67 & 0.50 \\ \hline
$K=9$ & 0.63 & 0.38 & 0.50 \\ \hline
$K=15$& 0.63 & 0.10 & 0.50 \\ \hline
\end{tabular}
\label{P1_estimators}
\end{table}

\begin{table}[ht!]
\caption{Number of inner iterations at which bulk mesh refinement is activated, for different values of $K$ ($I=10$, $\theta=0.1$ and $\zeta=\sqrt{2}$).}
\label{P1_k_afem}
\begin{tabular}{|c|c|c|c|c|c|c|c|c|c|c|}\hline
$i$    & 1 & 2 & 3 & 4 & 5 & 6 & 7 & 8 & 9 & 10 \\ \hline
$K=2$  & 2 & 0 & 1 & 1 & 1 & 1 & 1 & 1 & 1 &  1 \\ \hline
$K=3$  & 2 & 0 & 1 & 1 & 1 & 1 & 2 & 0 & 0 &  1 \\ \hline
$K=6$  & 2 & 0 & 1 & 1 & 1 & 1 & 1 & 1 & 1 &  1 \\ \hline
$K=9$  & 2 & 0 & 1 & 1 & 1 & 1 & 1 & 1 & 1 &  1 \\ \hline
$K=15$ & 2 & 0 & 1 & 1 & 1 & 1 & 1 & 1 & 1 &  1 \\ \hline
\end{tabular}
\end{table}


\section{General $d$-dimensional domains and/or higher finite element spaces} \label{Sgeneralizations}
 So far we considered the case of $d=2$ space dimensions, and lowest order approximation, i.e., continuous piecewise linears for $u$, piecewise constants for $\lambda$. We now discuss the case of general $d \geq 2$, and general polynomial orders.
 
First we address the question for which $s>0$, membership of $u$ in $\cA^s$ can be expected when $u$ is approximated from families of continuous piecewise polynomials of order $p \geq 2$.
Since generally $\lambda \neq 0$, the normal derivative of $u$ has a generally non-zero jump over the $(d-1)$-dimensional manifold $\gamma$, generally being not-aligned with any mesh.
 Assuming that apart from this jump, the solution $u$ is smooth, the question of approximability of $u$ in $H^1(\Omega)$  is equivalent to the question of approximability in $L_2(\Omega)$ of a piecewise smooth function, say a piecewise constant one w.r.t. the partition of $\Omega$ into $\wideparen{\Omega}$ and $\Omega \setminus \overline{\wideparen{\Omega}}$, from families of discontinuous polynomials of order $p-1$.
 Taking cells of diameter $h$ that intersect $\gamma$, regardless of the order $p$ the squared $L_2(\Omega)$-norm of the latter approximation error is $\eqsim h^d$ times the number of those cells, being of the order $(1/h)^{d-1}$.
 We infer that in terms of the total number $N$ of elements in the mesh, which satisfies $N \gtrsim (1/h)^{d-1}$, and with a proper refinement towards $\gamma$, even satisfies $N \eqsim (1/h)^{d-1}$, it holds that the $L_2(\Omega)$-norm of this error is $\sqrt{h} \eqsim N^{-\frac{1}{2(d-1)}}$.
 We conclude that generally at best $u \in \cA^{\frac{1}{2(d-1)}}$. 
 
 On the other hand, if the solution $\wideparen{u}$ of our original PDE, posed on $\wideparen{\Omega}$, is approximated from families of continuous piecewise polynomials of order $p$ w.r.t. (isotropic) partitions of $\wideparen{\Omega}$, then under appropriate (Besov) smoothness conditions, $\wideparen{u}$ can be approximated at rate $\frac{p-1}{d}$.
 
 \begin{remark}
 Other than for $d=2$, for $d>2$ and arbitrary Lipschitz domains
 these Besov smoothness conditions are not automatically valid for sufficiently smooth data, in which case this rate $\frac{p-1}{d}$ can only be realized by proper anisotropic refinements.
 \end{remark}
 
 Since for $d>2$ or $p>2$, it holds that $\frac{1}{2(d-1)} < \frac{p-1}{d}$, we conclude that for those $(d,p)$ a price to be paid for the application of the Fictitious Domain Method instead of the usual finite element method is that generally it results in a reduced best approximation rate.
 
\begin{remark} This deficit of the Fictitious Domain Method might be tackled by considering anisotropic refinements allowing  for a more accurate approximation of $\gamma$, by enriching the local finite element space on elements that intersect $\gamma$, or, as we will study in future work, by constructing an extension of $\wideparen{f}$ on $\wideparen{\Omega}$ to $f$ on $\Omega$ that yields a multiplier $\lambda$ that is small or preferably zero, and thus avoids the discontinuity in the normal derivative of $u$ over $\gamma$.
\end{remark}

Knowing that the solution $u$ of the Fictitious Domain Method is at best in $\cA^{\frac{1}{2(d-1)}}$, the straightforward generalization to $d$-dimensions
of the adaptive solution method that we have developed for $d=2$ yields the best possible approximation rate.
Indeed, assuming $f \in L_2(\Omega)$ and $g \in H^1(\gamma)$, it holds that $\lambda \in L_2(\gamma)$ and so its approximation in $H^{-\frac{1}{2}}(\gamma)$ by piecewise constants w.r.t. to uniform meshes converges with rate $\frac{1}{2(d-1)}$.
A direct generalization of \cite[Thms. 7.3-4]{45.47} from $2$ to $d$ dimensions shows that $f \in L_2(\Omega)$ and $\chi \in L_2(\gamma)$ are in the data approximation classes $\cB_\Omega^{\frac{1}{d}}$ and $\cB_\Omega^{\frac{1}{2(d-1)}}$, respectively (cf. Thm.~\ref{rhsclasses}).
Now the generalization of Thm.~\ref{rate} to $d$-dimensions shows that whenever $u \in \cA^s$ for some $s \in (0, \frac{1}{2(d-1)}]$, the sequence of approximations produced by our nested inexact preconditioned Uzawa algorithm converges with this rate $s$.

 Concluding we can  say that in any dimension our adaptive method solves the fictitious domain formulation with the best possible rate. On the other hand, without constructing a very special extension of $\wideparen{f}$, for $d>2$ (or $p>2$) this rate is generally lower that the best possible rate with which the original PDE can be solved with standard finite elements, i.e., w.r.t. to partitions of the original domain.
 
\newcommand{\etalchar}[1]{$^{#1}$}

\end{document}